\numberwithin{equation}{section}
\theoremstyle{plain}
\newtheorem{prop}{Proposition}[section]
\newtheorem{coro}[prop]{Corollary}
\newtheorem{lemm}[prop]{Lemma}
\newtheorem{theo}[prop]{Theorem}
\theoremstyle{definition}
\newtheorem{defi}[prop]{Definition}
\newtheorem{exam}[prop]{Example}
\newcommand\goesR[1]{\overset{{}_{#1}}{\rightsquigarrow}_{\hspace{-0.4ex}{\scriptscriptstyle R}}}
\newcommand\gozoi{\mathrel{\overset{\scriptscriptstyle 0,1,\infty}{\rightsquigarrow}}}
\newcommand\gozoiR{\mathrel{\gozoi_{\hspace{-1.2ex}{\scriptscriptstyle R}}}}
\newcommand\gozot{\mathrel{\overset{\scriptscriptstyle 0,1,2}{\rightsquigarrow}}}
\newcommand\gozotR{\mathrel{\gozot_{\hspace{-0.9ex}{\scriptscriptstyle R}}}}
\newcommand\resp{{\it resp}}
\begin{document}

\author{Eddy GODELLE and Sarah REES}

\address{Eddy Godelle, Normandie Univ, France; UNICAEN, LMNO, F-14032 Caen, France; CNRS UMR 6139, F-14032 Caen, France}
\email{eddy.godelle@unicaen.fr}
\urladdr{//www.math.unicaen.fr/\!\hbox{$\sim$}godelle}

\address{Sarah Rees, School of Mathematics and Statistics, University of Newcastle, Newcastle NE1 7RU, UK}
\email{sarah.rees@ncl.ac.uk}
\urladdr{//www.mas.ncl.ac.uk/\!\hbox{$\sim$}nser}

\title{Rewriting systems in sufficiently large Artin--Tits groups}

\keywords{Artin--Tits groups, large type, word problem}

\subjclass{20B30, 20F55, 20F36}

\begin{abstract}
 A conjecture of Dehornoy  claims that, given a presentation of an Artin-Tits group,  every word that represents the identity can be  transformed into the trivial word
using the braid relations, together with certain rules
(between pairs of  words that are not both positive)
that can be derived directly from the braid relations, as well as free
reduction, but
 without introducing trivial factors $ss^{-1} $ or $s^{-1} s$. This conjecture
is known to be true for Artin-Tits groups of spherical type or of FC type. We prove the conjecture for Artin--Tits groups of sufficiently large type.
\end{abstract}

\thanks{Work partially supported by GDRI 571: French-British-German network in Representation theory}

\maketitle

\section{Introduction}
Let $\Gamma_0$ be a  labelled finite complete simplicial graph with vertex set $S$, edge set~$E$ and label map $m: E\to \mathbb{N}_{\geq 2}\cup\{\infty\}$.   
The associated \emph{Coxeter graph}~$\Gamma$ is then obtained from~$\Gamma_0$ by removing edges labelled by $2$, and unlabelling edges whose label is $3$. 
One associates with $\Gamma_0$ an \emph{Artin--Tits group} (also known as \emph{Artin group}), denoted by~$A(\Gamma_0)$ (or by~$A(\Gamma)$), defined by the following \emph{Artin group presentation}:
$$\left\langle\  S\ \left | \underbrace{sts\cdots}_{m} = \underbrace{tst\cdots}_{m}\ ;\ s,t\in S \textrm{ and } m(\{s,t\}) = m\neq \infty \right.\right\rangle.$$

For instance, if $\Gamma_0$ is a complete graph with all edges labelled with $\infty$, then $\Gamma$ is equal to $\Gamma_0$ and  the group~$A(\Gamma)$ is the free group on $S$.
Alternatively, if all the labels in~$\Gamma_0$ are equal to $2$, then $\Gamma$ is completely disconnected and the group~$A(\Gamma)$ is the free abelian group on $S$.
More generally, considering graphs~$\Gamma_0$ whose labels are $2$ or $\infty$ only, one obtains the so-called right-angled Artin groups (RAAG for short), which are the object of numerous articles.
Finally, starting from any (undirected) Dynkin diagram and replacing doubled and tripled edges by edges labelled by $4$ and $6$, respectively, one obtains a Coxeter graph and therefore an Artin group, which is helpful in the study of the associated Coxeter group and the associated Hecke algebra.
For instance, if $\Gamma$ is an unlabelled line and $S$ has cardinality~$n$, then $A(\Gamma)$ is the braid group on $n+1$ strings.

So, the family of Artin groups contains various groups that are of interest. In the present article, we mainly focus on Artin groups where all the labels in $\Gamma_0$ are either equal to $\infty$ or greater or equal to $3$.
These groups are called \emph{large type Artin groups} and have been considered by various authors~\cite{App,ApS,BrC,HoR}.    

Artin groups are badly understood in general and few results are known. For instance, the word problem is an open problem for Artin groups, although it has been solved for particular subfamilies, including all those introduced above. In general, starting from a finite presentation of a group, there is no general
strategy to solve its word problem.
From the finite presentation, we can always derive
a finite number of elementary transformations on words (that replace some factors of words by other words -- see the Preliminary
Section~\ref{sec:prelim} for the definition of a factor), each of which
transforms any word to which it applies into another word with strictly shorter length.
And, whenever a sequence of such transformations is applied to an input word $w$,
because the sequence of word lengths is strictly decreasing,
this process must stop after at  most $|w|$ such elementary transformations.
Where the group presentation is hyperbolic,
the resulting word is the empty word if and only if the initial word represents
the identity element of the group. So, using the bound on the number of possible
successive elementary transformations, one gets a solution to the word problem. 
But, when the group presentation is not hyperbolic, there must be 
representatives of the identity element that are not reduced by the finite
set of transformations to
the empty word, and hence the process does not solve the word problem.

Unfortunately, except in the case of free groups, Artin groups are not
hyperbolic as they contains copies of $\mathbb{Z}^2$, so the approach above
does not work to solve the word problem in Artin groups. However, one might expect that
an alternative family of elementary transformations could lead to a solution to the word problem for any Artin group. It is natural to look for a solution
with a set of elementary transformations on words that do not increase length. 
For instance, the solution to the word problem obtained in~\cite{Ser} for right-angled Artin groups can be seen as a special example of this strategy. 
\begin{defi}
\label{D:Special}
For a finite presentation  $\langle S, R\rangle$, we define \emph{special transformations} on finite words on~$S\cup S^{-1}$ as follows:\\
- \emph{type $0$} (\resp. $\infty$): Remove (\resp. insert) some pair~$s^{-1} s$ or~$s s^{-1} $, where $s$ is a letter of~$S$,\\
- \emph{type~$1$}: Replace some factor $v$ by~$u$, or $v^{-1} $ by~$u^{-1} $, where $u = v$ is a relation of~$R$.
\end{defi} 

Since the presentation is finite, the set of special transformations is finite.
But note that, throughout the paper, we shall abuse notation and consider relations $u=v$ and $v=u$
to be the same relation, and hence, in particular, we shall also consider that 
there are four (rather
than two) type $1$ transformations associated with each relation $u=v$.

 From now on, for a set~$X$ of types of transformations on words, we write $w \goesR{\phantom1X\phantom1} w'$ if $w$ can be transformed to~$w'$ using transformations of type~$X$. Thus, for a group~$G$  with finite presentation~$\langle S, R\rangle$ and for every word~$w$ on~$S\cup S^{-1}$, we have
\begin{equation} w \gozoiR \varepsilon \quad \iff \quad \overline{w}=1,\end{equation} 
where $\varepsilon$ is the empty word and $\overline{w}$ is the image of $w$ in the group~$G$.
Of course here we do not get a solution to the word problem as long as we do not have a bound on the length of the path from $w$ to~$\varepsilon$ using relations of types $0,1,\infty$ --- such a bound does not exist in general. 
If one wants to obtain a set of elementary transformations that do not increase the length then it is legitimate to include transformations of type~$0$ or $1$, whereas transformations of type~$\infty$ have to be avoided.
Clearly if $w\goesR{0,1} \varepsilon$ then $\overline{w} = 1$ but the converse is not true in general, nor in the special case of Artin group presentations (see~\cite{DeG} for an example). 
It is therefore natural to address the question of whether one may add some other special transformations to obtain an equivalence for all Artin groups. Two partial results on that direction appear recently in the literature~\cite{DeG,HoR}. 

In~\cite{HoR} a family of elementary operations, called \emph{of type $\beta$}, is introduced. A special case of the main result  proved in~\cite{HoR}  is that for any Artin group presentation~$\langle S, R\rangle$ of large type and for every word~$w$ on~$S\cup S^{-1}$, we have  
\begin{equation} w \goesR{0,1,\beta} \varepsilon \quad\iff \quad \overline{w}=1.\end{equation}
The type $\beta$ operations do not
increase the length, and so this leads to a solution of the word problem.
However type $\beta$ operations are not finite in number (even if for a
given length only a finite number of them can be applied) and given a word,
it is not easy to detect where a type $\beta$ transformation can be applied.
Moreover, as shown in~\cite{HoR}, one cannot expect that the above equivalence holds for every Artin group.

Now comes the crucial point. 
In~\cite{DeG}, some special transformations of \emph{type $2$} are introduced,
falling into two types called     
\emph{type $2r$} (``type~$2$-right'') and
\emph{type $2l$} (``type~$2$-left  ),
which allow some limited substitution using unsigned words.
These are defined as follows:
\begin{defi}
\label{D:Special2}
Let $G=\langle S,R\rangle$ be a group, and suppose that
$u=v$ is a relation in $R$, between two positive words $u,v$.
Suppose that $u$ can be written as a concatenation $u_1u_2$ and $v$
as a concatenation $v_1v_2$.
Suppose that $w$ is a word over $S \cup S^{-1}$.
We define a transformation
that replaces a factor $v_1^{-1}u_1$ of $w$ by $v_2u_2^{-1}$
to be of \emph{type $2r$} (``type-$2$-right''), provided that
both $v_1$ and $u_1$ are non-empty.
We define a transformation
that replaces a factor  $v_2u_2^{-1}$ of $w$ by $v_1^{-1}u_1$
to be of \emph{type $2l$} (``type-$2$-left''), provided that
both $v_2$ and $u_2$ are non-empty.
\end{defi} 
Basically, the rules of type~$2$ are those directly derived from braid relations for which the
left hand side of the rule is unsigned.
Note that in the special case where all four of $u_1,u_2,v_1,v_2$ are
non-empty, the associated transformations of types $2l$ and $2r$ are
inverse to each other.

\begin{exam} Consider  a \emph{dihedral} Artin group with $S = \{s,t\}$ and $m_{s,t} = 3$. 
Then the transformations
  \[ s^{-1}t^{-1}s \rightarrow ts^{-1}t^{-1}\,\hbox{\rm and}\,
t^{-1}s^{-1}t^{-1}s \rightarrow s^{-1}t^{-1}\]
are both of type $2r$,
the first one with $u_1=s$, $u_2=ts$, $v_1=ts$ and $v_2=t$,
and the second one with $u_1=s$, $u_2=ts$,$v_1=tst$ and $v_2$ empty.
Then, the reverse of the first, \[ts^{-1}t^{-1}\rightarrow s^{-1}t^{-1}s\]
is type $2l$, but the reverse of the second,
\[ s^{-1}t^{-1}\rightarrow t^{-1}s^{-1}t^{-1} s\] is not of type $2l$.
\end{exam}

It is immediate that the set of transformations of type~$2$ is finite, and
therefore so is the set of transformations of types~$0,1$ or $2$.  Moreover, it is clear that $w \goesR{0,1,2}\varepsilon$ implies~$\overline{w}  = 1$.
The reader may wonder why operations of type~$2$ should be considered of special interest. 
The explanation comes from  the case of \emph{spherical type} Artin presentations
(that is, those based on finite type Dynkin diagrams, and so in particular all braid groups).

In this case, $w \goesR{0,1} \varepsilon \iff \overline{w}  = 1$ does not hold,but  $w \goesR{0,2} \varepsilon \iff \overline{w}  = 1$ does~\cite{DeG}.
Further, in such a group, a sequence of operations of types~$2r$ (resp. $2l$)
and $0$ allows one to transform any (unsigned) word $w$ into a word of the
form $w_1w_2^{-1}$ (resp. $w_1^{-1}w_2$) for which $w_1,w_2$ are positive
words with no common right (resp. left) divisor in the associated Artin-Tits monoid.
(See \cite{DeG} and Propositions~ \ref{leftnormaldecomp} and \ref{PropConvNF} 
below.)

The result we are going to prove here is 
\begin{theo}\label{TH1} For every sufficiently large Artin-Tits presentation~$\langle S, R\rangle$, and any word $w\in S\cup S^{-1}$,  
 \begin{equation}w \goesR{0,1,2} \varepsilon \iff \overline{w}  = 1\end{equation}
\end{theo}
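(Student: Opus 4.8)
The plan is to establish the non-trivial implication: if $\overline{w}=1$ then $w\goesR{0,1,2}\varepsilon$. The easy direction, that $w\goesR{0,1,2}\varepsilon$ implies $\overline{w}=1$, is already noted in the text since every special transformation preserves the image in the group. The overall strategy I would adopt is to reduce the general word problem to a normal-form computation. Specifically, I would aim to show that, using transformations of types $0$, $1$ and $2$, any word $w$ on $S\cup S^{-1}$ can be driven to a canonical two-sided form $w_1 w_2^{-1}$ with $w_1,w_2$ positive words that have no common right divisor in the Artin--Tits monoid, exactly as the excerpt promises for spherical type via Propositions~\ref{leftnormaldecomp} and~\ref{PropConvNF}. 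If such a form can always be reached by a $\goesR{0,1,2}$-sequence, and if this form is genuinely canonical (i.e. uniquely determined by $\overline{w}$), then $\overline{w}=1$ forces $w_1=w_2=\varepsilon$, and we are done once we check that the reduction to the empty word can itself be carried out by type $0,1,2$ moves.

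\emph{First} I would set up the local geometry of the group. In sufficiently large type, any two generators $s,t$ with $m(\{s,t\})\ge 3$ generate a dihedral Artin subgroup, and within each such subgroup the type $2$ transformations implement precisely the rewriting behaviour that works in the spherical case. So the first substantive step is a purely \emph{dihedral} analysis: I would verify that in the rank-two group generated by $s,t$, a combination of type $0$ and type $2r$ moves converts any word into a right-greedy form $w_1 w_2^{-1}$ with $w_1,w_2$ positive and sharing no common right divisor, and symmetrically for type $2l$. This is the content flagged after the spherical-type discussion, and I expect it to be essentially the $\goesR{0,2}$ result of~\cite{DeG} applied rank-two-wise. \emph{Second}, I would globalise: the hypothesis of ``sufficiently large'' type should guarantee that interactions between non-adjacent or weakly-adjacent generators are controlled enough that a global reduction can be assembled from the local dihedral reductions. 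Here the geodesic or van~Kampen diagram machinery for large-type Artin groups (as exploited in~\cite{HoR}) is the natural tool: one uses the fact that geodesic words have a tractable structure, so that a word representing the identity bounds a diagram whose cells are dihedral relations, each of which can be resolved by type $1$ and type $2$ moves without increasing length.

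The \emph{main obstacle} I anticipate is the globalisation step, and in particular controlling what happens at the \emph{boundaries between syllables} coming from different dihedral subgroups. Within a single dihedral subgroup the type $2$ moves are exactly the derived rules from the braid relation, but when a cancellation $s s^{-1}$ straddles two different rank-two pieces, one must show that the type $0,1,2$ rewriting can still propagate the cancellation across the syllable boundary without ever needing a type $\infty$ insertion. The delicate point is confluence: I would need to prove that the rewriting system $\goesR{0,1,2}$ is confluent enough (up to the braid relations of type $1$) that the canonical form is reached regardless of the order of moves, and that length never has to increase. I expect this to require a careful induction, probably on the length of $w$ together with some secondary complexity measure counting the number of sign changes or the number of distinct dihedral syllables, using the large-type hypothesis precisely where two overlapping braid relations might otherwise force an increase in length. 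Establishing that the locally-defined greedy forms patch together into a single global canonical form $w_1 w_2^{-1}$ — equivalently, establishing a suitable analogue of the left-normal-form calculus of Propositions~\ref{leftnormaldecomp} and~\ref{PropConvNF} in the non-spherical large-type setting — is where the real work lies.
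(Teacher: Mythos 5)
Your opening dihedral step is essentially the paper's Section on dihedral groups (the analysis of $\Omega(\Gamma_0)$ and Propositions~\ref{PropOmegaconnected} and~\ref{PropConvNF}), but the two places where you yourself locate ``the real work'' are genuine gaps, and the first rests on a false premise. The global canonical form $w_1w_2^{-1}$ with $w_1,w_2$ positive and with no common right divisor \emph{does not exist} outside spherical type: Proposition~\ref{leftnormaldecomp} depends on the lattice structure of the positive monoid, which fails already for the free group on $\{s,t\}$ (a large type group, label $\infty$), where the element $\overline{s^{-1}t}$ admits no expression $g_1g_2^{-1}$ with $g_1,g_2$ positive. So ``establishing a suitable analogue of the left-normal-form calculus in the non-spherical large-type setting'' is not where the work lies; it is impossible as stated. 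The paper uses that normal form only inside rank-two (hence spherical) parabolic subgroups, and globally it targets \emph{geodesic} representatives instead. The decisive lemma is that each Holt--Rees transformation of type $\beta$ (replacing a critical word $w$ by $\tau(w)$) factors as a sequence of type $1$ and type $2^\star$ moves (Lemmas~\ref{Lemtauet2star}, \ref{Lemtauet1}, \ref{Lemtauet3}); combined with \cite[Proposition 3.3]{HoR}, which says a freely reduced non-geodesic word can always be made freely reducible by $\beta$-moves, this gives Proposition~\ref{PropHLarge2} and hence Property~$H$ for large type. This also sidesteps your confluence worry: one never proves the $0,1,2$ system confluent, only that it simulates the already-understood $\beta$-rewriting.

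The second gap is that you treat ``sufficiently large'' as if it were ``large''. Sufficiently large presentations may contain commuting generators ($m_{s,t}=2$), for which there is no critical-word machinery and the Holt--Rees geodesic structure you invoke is unavailable. The paper handles this via Proposition~\ref{propcarSLAT}: sufficiently large groups are iterated amalgams, over standard parabolic subgroups, of large type groups and finitely generated free abelian groups. Since Property~$H$ is not known to pass to such amalgams, the paper proves the stronger, transversal-based Property~$H^\#$ for large type groups (Proposition~\ref{protrans}, which requires the coset-minimality analysis of Proposition~\ref{Proptrans} and convexity of parabolics) and then propagates it through the amalgamation using \cite[Proposition 2.14]{DeG} (Proposition~\ref{propHDpAmal}), by induction on the number of $\infty$-labelled edges; Theorem~\ref{TH1} then falls out of Theorem~\ref{TH2}. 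Nothing in your proposal addresses the commuting generators, the $\infty$-labelled edges, or the need for a property that is stable under amalgamation, and without some such mechanism the passage from large to sufficiently large type does not go through.
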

Our definition of sufficiently large type for an Artin-Tits presentation is taken from \cite{HoR2}, but we repeat it as Definition~\ref{def:sufflarge}; note that this definition includes all presentations of large type.

It is conjectured in~\cite[Conj.~3.28]{Deh}
that the equivalence of Theorem~\ref{TH1} holds for all Artin-Tits presentations. 
The conjecture is repeated in ~\cite[Conj.~1.6]{DeG}, where it is proved for Artin-Tits presentations of spherical and FC types.

\begin{defi}
A finite presentation~$\langle S, R\rangle$ of a group~$G$ satisfies Property~$H$ if the equivalence $w \gozotR \varepsilon \iff \overline{w} = 1$ holds for every word~$w$ on~$S\cup S^{-1}$.
\end{defi}
So, we can rephrase \cite[Conj.~1.6]{DeG} as a claim that Property~$H$ is satisfied by every Artin presentation, and Theorem~\ref{TH1} as a verification of
that claim for any Artin presentation of sufficiently
large type.
The conjecture was already proved in \cite{DeG} for spherical type Artin groups,
and then more generally for all Artin groups of FC type, and so all right-angled Artin groups.
But most Artin groups of sufficiently large type (and in particular those of
large type) are not of FC type, and so are not covered by the results of
\cite{DeG}; so certainly our result provides a new family of Artin groups
for which the conjecture is valid.
We prove the result for groups of sufficiently large type as an extension of
the result for groups of large type, recognising that the first of those two
classes of groups can be  built out of the second in the same way that the
groups of FC type can be built out of the groups of spherical type;
so this part of our proof follows the proof of \cite{DeG}.
In order to obtain their result for Artin groups of FC type, the authors of \cite{DeG} proved that a stronger but more technical property,
namely Property~$H^\#$ (defined in \cite{DeG}), is satisfied by Artin groups of spherical type. We will prove that this property holds for Artin groups of large type. Then, combining this result with~\cite[Prop.~2.14]{DeG} we will derive the following. 
\begin{theo}
\label{TH2} Let $\mathcal{T}$ be the family of those Artin-Tits presentations
$(S,R)$ for which every connected full subgraph of the Coxeter graph~$\Gamma$
without an $\infty$-labelled edge is either of spherical type or of large type. 
Then
every presentation $(S,R)$ in $\mathcal{T}$ satisfies Property~$H^\#$ and (therefore) Property~$H$. Hence, for every word $w$ on~$S\cup S^{-1}$,   
 $$w \goesR{0,1,2} \varepsilon \iff \overline{w}  = 1.$$
\end{theo}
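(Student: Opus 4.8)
The plan is to deduce Property~$H^\#$ for every presentation in $\mathcal{T}$ from Property~$H^\#$ for its irreducible building blocks, following exactly the route used for the FC case in \cite{DeG}. The key external ingredient is \cite[Prop.~2.14]{DeG}, which is a local-to-global statement: it reduces Property~$H^\#$ for an Artin-Tits presentation $(S,R)$ to Property~$H^\#$ for the standard parabolic presentations $(T,R_T)$ attached to those subsets $T\subseteq S$ whose full subgraph $\Gamma_T$ is connected and carries no $\infty$-labelled edge. This is precisely the mechanism by which \cite{DeG} obtains the FC case from the spherical case: when $(S,R)$ is of FC type, every such $\Gamma_T$ is of spherical type, so the spherical-type instance of Property~$H^\#$ can be fed into \cite[Prop.~2.14]{DeG}. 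My strategy is to run the same argument, but with an enlarged supply of base cases.

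First I would unwind the hypothesis defining $\mathcal{T}$: for a presentation in $\mathcal{T}$, each connected, $\infty$-free full subgraph $\Gamma_T$ is by construction of spherical type \emph{or} of large type. Hence the parabolic presentations that \cite[Prop.~2.14]{DeG} is sensitive to fall into exactly two families. For the spherical building blocks, Property~$H^\#$ is already available from \cite{DeG}. For the large building blocks, I would invoke the central technical result established in the body of this paper, namely that every large-type Artin-Tits presentation satisfies Property~$H^\#$. Together these cover every connected, $\infty$-free full subgraph of $\Gamma$.

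With both families of building blocks satisfying Property~$H^\#$, the hypothesis of \cite[Prop.~2.14]{DeG} is met, and the proposition applies verbatim to yield Property~$H^\#$ for $(S,R)$. Since Property~$H^\#$ is by design stronger than Property~$H$, the presentation then satisfies Property~$H$, which is precisely the asserted equivalence $w \goesR{0,1,2} \varepsilon \iff \overline{w}=1$. This completes the derivation of Theorem~\ref{TH2} from the large-type result.

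The main obstacle does not lie in this final assembly, which is essentially bookkeeping once the two local results and the gluing proposition are in hand; it lies in the prior step of proving Property~$H^\#$ for large-type presentations, which is where all the genuine work of the paper is concentrated. Within the present derivation the only real checks are: that the class of subgraphs controlling \cite[Prop.~2.14]{DeG}, the connected and $\infty$-free full subgraphs, is exactly the class constrained by the definition of $\mathcal{T}$, so that no base case is left uncovered (in particular, disconnected subgraphs factor as direct products and $\infty$-labelled edges contribute free-product-type splittings, both of which are absorbed by the inductive gluing rather than demanding new base cases); and that our large-type statement of Property~$H^\#$ is formulated for the same notion of Artin-Tits presentation and the same system of transformations $\goesR{0,1,2}$ as in \cite{DeG}, so that it substitutes directly into their inductive argument.
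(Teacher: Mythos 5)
Your overall strategy is the paper's: establish Property~$H^\#$ for the two families of building blocks (spherical type via \cite{DeG}, large type via the main technical result of the paper) and glue with \cite[Prop.~2.14]{DeG}, concluding $H^\#\Rightarrow H$. However, there is a genuine gap in how you dispose of the gluing. You describe \cite[Prop.~2.14]{DeG} as a local-to-global principle that reduces $H^\#$ for $(S,R)$ to $H^\#$ for the connected, $\infty$-free full subgraphs, and you assert that disconnected subgraphs, which ``factor as direct products,'' are ``absorbed by the inductive gluing.'' They are not. As stated (Proposition~\ref{propHDpAmal}(2)), the gluing result applies only to decompositions $A_S = A_{S_1}*_{A_{S_{1,2}}}A_{S_2}$ as an \emph{amalgamated product} over a standard parabolic subgroup, which is the splitting produced by an $\infty$-labelled edge. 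A disconnected $\infty$-free Coxeter graph gives a \emph{direct} product, obtained by adjoining $2$-labelled edges, and $A_{S_1}\times A_{S_2}$ is not an amalgam of its standard parabolics $A_{S_1}$ and $A_{S_2}$ over a standard parabolic (that amalgam would be the free product). So in the base case of the induction (no $\infty$-labelled edges, $\Gamma$ disconnected, e.g.\ one component spherical and one of large type), your argument has no tool to apply: the presentation is in $\mathcal{T}$ but is neither spherical nor of large type, and no amalgam splitting is available.

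This is exactly why the paper proves a separate direct-product lemma (Lemma~\ref{Lemmdirectprod}): if $(S_1,R_1)$ and $(S_2,R_2)$ satisfy $H^\#$ and all cross-relations are commutations, then so does their direct product presentation. Its proof is not pure bookkeeping --- one must build a product $S$-sequence of transversals and check that the commutation relations, used to separate the $S_1$-letters from the $S_2$-letters, can be realised by transformations of types $1$ and $2$ only. Note that this issue is invisible in the FC case you are modelling the argument on, because a direct product of spherical-type Artin groups is again of spherical type, so no extra base case arises there; the class $\{$spherical, large$\}$ is \emph{not} closed under direct products, and that is precisely the new ingredient your proposal is missing. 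With Lemma~\ref{Lemmdirectprod} added to handle the base case $d_\infty(S,R)=0$, your induction on the number of $\infty$-labelled edges goes through as in the paper.
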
      

The reader should note that transformations of type~2 can increase the length
of words  and therefore our result does not lead directly to a solution to the word problem. However,
whenever we can prove a bound on the maximal number of iterations of special
transformations we can derive a solution to the word problem. We might expect 
that such a bound could be found in all cases, even if a general argument for that
remains to be found. The main argument of our proof will consist of 
a proof that that any transformation of type $\beta$ can be decomposed as a
sequence of type~$2$ and type~$1$ transformations. 
The remainder of this article is organised as follows.
Section~\ref{sec:H} is devoted to the proof of Property~$H$ for Artin-Tits
groups of large type, stated at the end of the section as Corollary~\ref{CoroHLarge2}. 
Section~\ref{sec:sufflarge} considers Artin-Tits groups of sufficiently large type.
Those groups are introduced at the beginning of 
Section~\ref{sec:sufflarge}, and characterized in
Section~\ref{sec:char_sufflarge} as a particular class of Artin groups closed under
amalgamation over standard parabolic subgroups.
Property~$H^\#$ is defined in Section~\ref{sec:Hsharp}, and Proposition~\ref{propHDpAmal} \cite[Proposition 2.14]{DeG} is stated, which will be used to extend the
proof of Property~$H^\#$ for large type groups to a larger class of Artin-Tits
groups, which (according to the given characterization) must include all those of sufficiently large type. 
Section~\ref{secCritSequ} introduces the machinery of critical factorizations
and critical sequences of transformations for large type groups from \cite{HoR}.
Section~\ref{secTransv} is devoted to the proof that Artin groups of large type
satisfy Property~$H^\#$, and the final section, Section~\ref{sec:finalproof}, to the proofs
of Theorems~\ref{TH2} and ~\ref{TH1}.

We would like to acknowledge some helpful comments from the referee, which led us to introduce property $H^+$, strengthen Corollary~\ref{CoroHLarge2} as Corollary~\ref{CoroHplusLarge}, and prove Proposition~\ref{propHplus_amalg}.
 
\section{Property~$H$ For Artin-Tits groups of large type}
\label{sec:H}
In this section we prove that Property~$H$ holds for Artin-Tits group of large type. Throughout this section, we fix a  Coxeter graph~$\Gamma$. We denote
by~$\Gamma_0$  the associated labelled full simplicial graph, as defined in the introduction. We assume that the
associated Artin-Tits group is of large type;
recall that this means that any edge label is either infinite or at least $3$. 
In order to prove the result, we start  with the crucial notion of a 
\emph{critical word} introduced in  \cite{HoR}.  As before, by $S$ we denote the set of vertices of $\Gamma$ 
\subsection{Preliminaries}
\label{sec:prelim}
We start with some terminology. Let $S$ be a finite set. We denote by $(S\cup S^{-1})^*$ the set of words  on $S\cup S^{-1}$. A word $w$ on~$S\cup S^{-1}$ will be called \emph{positive} if it is written over $S$. It will be called \emph{negative}  if it is written over $S^{-1}$.
A \emph{signed} word is a word that is either positive or negative.
Some words are neither positive nor negative; when this is the case, we say
that the word $w$ is \emph{unsigned}.  We denote by $w^{-1}$ the word  that is the formal inverse of $w$. 
If $G$ is a group generated by $S$, a element $g$ of $G$ will be called \emph{positive} (relative to $S$) when it possesses a positive representative word
over $S$. 

A word $v$ on~$S\cup S^{-1}$  will be called a \emph{factor} of a word $w$ if the word $w$ can be decomposed with $w = w_1vw_2$ for some words $w_1,w_2$. 

 A word $w$ is called \emph{freely reduced} if it does not contain a factor of the form either $ss^{-1}$ or $s^{-1}s$ for some $s$ in $S$. Equivalently, $w$ is freely reduced if no operation of type $0$ can be applied. 

When $x$ and $y$ are two words and $m$ is a positive integer, we denote  by $[x,y,j\rangle$ the word $xyxy\cdots$ obtained as the alternating concatenation of
the words $x$ and $y$ that concatenates $j$ words, the first being $x$.
Hence $[x,y, 5\rangle = x[y,x,4\rangle =xyxyx$.
Similarly we denote by $\langle y,x,j]$ the word $\cdots yxyx$ obtained as the alternating concatenation of the words $x$ and $y$,  that concatenates $j$ words,
the last being $x$. So   $\langle y,x,6] = yxyxyx$.
In particular $[x,y,j\rangle = \langle y,x,j]$ or $[x,y,j\rangle =  \langle x,y,j]$
depending on whether $j$ is odd or even.
For instance, $[x,y, 5\rangle =xyxyx = \langle y,x,5]$ but $[x,y, 4\rangle =xyxy= \langle x,y,4]$. 
   
Finally, we say that a positive word is \emph{square-free}  if it does not contain a factor of the form $ss$ for any $s$ in $S$.   Now, for a word $w$,
we denote by $\tilde{p}(w)$ the length of the longest square-free positive
factor of $w$. Similarly we denote by $\tilde{n}(w)$ the length of the longest
negative factor of $w$ whose formal inverse is square-free.

\subsection{Dihedral Artin-Tits groups}
In the case where $\Gamma$ has only~$2$ vertices~$s,t$, the associated Artin
group~$A(\Gamma_0)$ is called a \emph{dihedral} Artin group. Indeed, in the
case where $m = m(\{s,t\})\neq\infty$, if one adds the relations $s^2 = 1$
and $t^2 = 1$ to the presentation of $A(\Gamma_0)$, then one obtains the
associated Coxeter group. On the other hand the obtained presentation is the one of a dihedral group of order $2m$. In particular the latter group is finite.  Following \cite{MaM}, for a word $w$ we set $p(w) = \min(\tilde{p}(w),m)$ and $n(w) = \min(\tilde{n}(w),m)$. 

In order to state the next result, let us introduce a restricted version of the operation of type~2.  
 \begin{defi}
\label{D:Special3}
For a finite presentation of a group~$\langle S, R\rangle$, we define
\emph{special transformations} of type~$2^\star$ on words on $S\cup S^{-1}$ to
be special transformations of type~2 for which, following the notation of Definition~\ref{D:Special2}, \\  
- in type $2r$ with $w = w_1v^{-1}v'w_2$, the word  $v$ is the greatest left common factor of the words  $vu$ and  $vw_1^{-1}$ and  the word  $v'$ is the greatest left common factor of the words  $v'u'$ and  $v'w_2$;\\
- in type $2\ell$ with $w = w_1v{v'}^{-1}w_2$, the word  $v$ is the greatest right common factor of the words  $uv$ and  $w_1v$ and  the word  $v'$ is the greatest right common factor of the words  $u'v'$ and  $w_2^{-1}v'$. \end{defi} 

\begin{exam} Consider  a \emph{dihedral} Artin group with $S = \{s,t\}$ and $m_{s,t} = 4$.  Consider the word $t^2sts^{-1}t$.   Then, we have $t^2{\bf sts^{-1}}t \goesR{2} t^2{\bf t^{-1}s^{-1}tst}t$,
but this transformation is not of type $2^\star$ because $v = st$; $uv = stst$ and $w_1v = t^{-1} tst$.
So  the word $tst$ is a greater common right divisor of $uv$ and $w_1v$ than $v$.
However,   $t{\bf tsts^{-1}}t \goesR{2^\star} t{\bf s^{-1}tst}t$.
The reader should note that in both cases the last letter of $w_1$ is $t$,
but  in the first case $tv$ is square-free, whereas in the second case $tv$ is not.\\
 
\end{exam}

For the remainder of this section, we assume that $\Gamma_0$ has only~$2$
vertices~$s,t$ (so $S=\{s,t\}$), and set $m = m_{s,t}$. So the group~$A(\Gamma_0)$
is a dihedral Artin group.  We set  $$\Omega(\Gamma_0) = \left\{ w\in (S\cup S^{-1})^*\mid w \textrm{ is freely reduced with} \max(p(w),n(w)) < m\right\}$$ 
The reader should note that the definition of $\Omega(\Gamma_0)$ ensures that operations of types 0 or 1 cannot be applied to words in $\Omega(\Gamma_0)$; only  operations of 
type~$2$ can apply.
Moreover, if $w$ belongs to $\Omega(\Gamma_0)$, then $\tilde{p}(w) = p(w)$ and $\tilde{n}(w) = n(w)$. 

For a non-empty word  $w$ in $\Omega(\Gamma_0)$,  we define the \emph{$\star$-decomposition of} $w$  to be the unique
sequence $(v_1,\ldots, v_k)$ of non-trivial  words such that $w = v_1\cdots v_k$ and each factor $v_i$ is the maximal square-free signed left factor of the word $v_i\cdots v_k$. 

\begin{prop} \label{PropOmegaClosedprel}
Let $w$ be a non-empty word that belongs to $\Omega(\Gamma_0)$, and
denote by $(v_1,\cdots, v_k)$ the $\star$-decomposition of $w$.
Assume that the word $w'$ is obtained from $w$ by an operation of type $2^\star$.
Then, $w'$ belongs to $\Omega(\Gamma_0)$ too. Moreover,  there exists a
subscript $i$ such that the $\star$-decomposition of $w'$ is
$(v_1,\cdots,v_{i-1},u_i,u_{i+1}, v_{i+2},\cdots,v_k)$  and the $2^\star$-operation  transforms $w$ into $w'$  by replacing the factor $v_iv_{i+1}$ of $w$ by the word $u_iu_{i+1}$. 
\end{prop}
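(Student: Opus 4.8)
The plan is to fix a single type~$2^\star$ operation and to track its effect on the greedy left-to-right parse that defines the $\star$-decomposition. By symmetry it suffices to treat type~$2\ell$; type~$2r$ is the mirror image, obtained by reversing all words and exchanging prefixes with suffixes. Throughout I will use that in a dihedral group on $S=\{s,t\}$ a signed word is square-free precisely when it alternates between $s^{\pm1}$ and $t^{\pm1}$; hence each factor $v_j$ of the $\star$-decomposition is a maximal alternating run of a single sign, and each junction $v_j\mid v_{j+1}$ is a genuine \emph{stop}, namely either a change of sign or a repeated letter (a square). Since $w\in\Omega(\Gamma_0)$, the word $w$ is freely reduced and every such run has length strictly less than $m$.

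Following Definition~\ref{D:Special3}, I write the affected factor as $v\,{v'}^{-1}$ inside $w=w_1\,v\,{v'}^{-1}\,w_2$, where $v$ is a terminal segment of one relator $B=uv$ and $v'$ a terminal segment of the other relator $A=u'v'$; both $A$ and $B$ are alternating positive words of length $m$ starting with the two different generators. The type~$2\ell$ operation replaces $v\,{v'}^{-1}$ by $u^{-1}u'$. The first key step is to read the two $2^\star$ conditions geometrically. The requirement that $v$ be the greatest right common factor of $uv=B$ and $w_1v$ says exactly that the maximal positive run of $w$ ending at the internal sign change of $v\,{v'}^{-1}$ does not extend to the left of $v$; as $v$ is itself an alternating positive suffix of that run, this forces $v=v_i$ for some index $i$. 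Symmetrically, the condition on $v'$ forces ${v'}^{-1}=v_{i+1}$, the whole maximal negative run beginning at that sign change. Because $v$ and $v'$ are nonempty (by the definition of type~$2\ell$) and $v_i,v_{i+1}$ have length $<m$, the complementary prefixes $u,u'$ are nonempty; hence $u^{-1}u'$ splits into the two nontrivial alternating factors $u^{-1}$ (negative) and $u'$ (positive), which will become the new $\star$-factors $u_i:=u^{-1}$ and $u_{i+1}:=u'$.

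The main work, and the step I expect to be the real obstacle, is to show that after substituting $u^{-1}u'$ for $v_iv_{i+1}$ the greedy parse breaks at exactly the same places, i.e. that all the new junctions $v_{i-1}\mid u^{-1}$, $u^{-1}\mid u'$ and $u'\mid v_{i+2}$ remain stops and introduce no cancelling pair. The delicate case is $v_{i-1}\mid u^{-1}$ when $v_{i-1}$ is negative: then the original stop $v_{i-1}\mid v_i$ was a sign change, whereas $u^{-1}$ is again negative, so a priori the two runs could merge. Here the structure of the relators saves the argument: alternation of $B=uv$ forces the last letter of $u$ to be the generator opposite to the first letter of $v$, while free-reducedness of $w$ at the sign change $v_{i-1}\mid v_i$ forces the last letter of $v_{i-1}$ to be the inverse of that same opposite generator; hence $u^{-1}$ begins with a repeat of the last letter of $v_{i-1}$, a square, and the parse still stops. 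The remaining junctions are handled by the same bookkeeping: in each case one reads the two boundary letters off the alternation of $A$ and $B$ and the free-reducedness of $w$, and finds either a sign change or a square, and never a pair $x\,x^{-1}$ or $x^{-1}x$. In particular $w'$ is freely reduced.

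Finally I assemble the conclusion. Since every junction of $w'$ is a stop, each maximal alternating run of $w'$ lies inside a single member of the list $v_1,\dots,v_{i-1},u^{-1},u',v_{i+2},\dots,v_k$; the unchanged $v_j$ have length $<m$, and $|u^{-1}|=m-|v|<m$ and $|u'|=m-|v'|<m$ because $v,v'$ are nonempty. Therefore $\max(p(w'),n(w'))<m$, so $w'\in\Omega(\Gamma_0)$, and that same list, being a sequence of maximal alternating runs separated by stops, is precisely the $\star$-decomposition of $w'$. This yields both assertions of the proposition, with the operation replacing $v_iv_{i+1}$ by $u_iu_{i+1}=u^{-1}u'$.
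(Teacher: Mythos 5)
Your proof is correct and follows essentially the same route as the paper's: use the $2^\star$ maximality conditions to identify $v$ and $v'$ with exact $\star$-factors $v_i,v_{i+1}$, then check the boundary letters at each new junction via the alternation of the relator halves and free-reducedness to conclude that $w'$ is freely reduced, stays in $\Omega(\Gamma_0)$, and has the expected $\star$-decomposition. The only (immaterial) differences are that you work out the $2\ell$ case where the paper works out $2r$, and you phrase the bound $\max(p(w'),n(w'))<m$ via maximal alternating runs rather than via the paper's overlap analysis of length-$m$ relator factors.
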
   

\begin{proof} 
Since the Artin group is assumed dihedral, the set $R$ consists of
a single relation $[s,t,m\rangle = [t,s,m\rangle$.
Write  $w = w_1w_2w_3$ and  $w' = w_1w_2'w_3$, where $w'_2$ is the factor
of $w'$ obtained from  the factor $w_2$ of $w$ by applying an operation of type $2^\star$.
We have either $w_2 = v^{-1}v'$ or $w_2 = vv'^{-1}$. 
In the first case, $w'_2 = u{u'}^{-1}$ where $vu = v'u'$ is the unique relation in $R$.
In the second case, $w'_2 = u^{-1}{u'}$ where $uv = u'v'$ is the unique relation in $R$.  

Assume the first case. The arguments for the second case are similar and left to the reader. 
We can assume without restriction that $v = [s,t,j\rangle$  and $v' = [t,s,\ell\rangle$ for some $j,\ell < m$. 
Then $vu  = [s,t,m\rangle$ and $v'u' = [t,s,m\rangle$.
Since the words $v_1,\cdots, v_k$ are signed words and any word $v_iv_{i+1}$ is either unsigned or not square-free,
there has to exist a subscript $i$ so that $v^{-1}$ is a right factor of $v_i$ and $v'$ is a left factor of $v_{i+1}$.
Now the operation that transforms $w$ into $w'$ is not just of type~$2$
but actually of type $2^\star$.
This imposes $v^{-1} = v_i$ and $v' = v_{i+1}$.  For $a$ in $S = \{s,t\}$,
denote by $\hat{a}$ the other letter of $S$.
Denote by $a$ and $a'$ the last letter of $v$ and  $v'$, respectively. 
Then, the first letter of $u$  is $\hat{a}$ and the last letter of ${u'}^{-1}$ is $(\hat{a'})^{-1}$.
Denote by $c$ the last letter $c$ of $w_1$, that is, the last letter of $v_{i-1}$,
and by $d$ the first letter of $w_3$, that is, the first letter of $v_{i+2}$.

The only cases where $w'$ could not be freely reduced would be if either $c = \hat{a}^{-1}$
or $d = \hat{a'}$. If c were equal to $\hat{a}^{-1}$ then we would have
$vc^{-1} = [s,t,j+1\rangle$ with $j+1 \leq n(w) < m$. so, $vc^{-1}$, that is,
$v\hat{a}$, would be a common left factor of $vw_1^{-1}$ and $vu$, contradicting the fact that the transformation is of type $2^\star$.
So  $c\neq \hat{a}^{-1}$. Similarly, $d\neq \hat{a'}$. Hence, $w'$ is a freely
reduced word.

We prove now that $\max(p(w'),n(w'))< n$. This will imply that
$w'$ belongs to $\Omega(\Gamma_0)$. Assume that one of the words 
$[s,t,m\rangle$ and  $[s,t,m\rangle$ or their inverses is a factor of $w'$.
Since $w$ belongs to $\Omega(\Gamma_0)$, this word cannot be a factor of
$w_1$ or of $w_3$. On the other hand, it can not be a factor of $w'_2$.
Then that word has to either overlap $w_1$ and $w'_2$, or to overlap $w'_2$ and $w_3$.
In the first case, it has to contain the word $c\hat{a}$ as a factor, which implies that $c=a$. In the second case it has to contain the word  ${d}^{-1}\hat{a'}$  which implies that $d=a'^{-1}$. Either of
these would contradict that fact that $w$ is freely reduced.
We deduce that 
$\max(p(w'),n(w')) < m $, and hence $w'$ belongs to $\Omega(\Gamma_0)$.

From $c \neq a$ and $d \neq a'^{-1}$,  we further deduce that
neither $v_{i-1}\hat{a}$ nor ${u'}^{-1}d$ can be a square-free signed word.
Hence 
$(v_1,\cdots,v_{i-1},u,{u'}^{-1}, v_{i+2},\cdots,v_k)$ is the $\star$-decomposition of $w'$.
\end{proof}

From Proposition~\ref{PropOmegaClosedprel} we deduce that
\begin{coro} \label{PropOmegaClosed}Assume that~$A(\Gamma_0)$ is a dihedral Artin group.
 Then,  $\Omega(\Gamma_0)$ is closed under the operations of type $2^\star$.
Moreover, if $w,w'$ belong to $\Omega(\Gamma_0)$ then $$w \goesR{2^\star} w' \iff w' \goesR{2^\star} w.$$
\end{coro}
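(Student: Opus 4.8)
The plan is to read off both assertions from Proposition~\ref{PropOmegaClosedprel}, after reducing the general statements to the case of a single type $2^\star$ operation.

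Closure is almost immediate. A type $2^\star$ operation replaces a non-empty factor, so it cannot be applied to the empty word; and for a non-empty $w\in\Omega(\Gamma_0)$, Proposition~\ref{PropOmegaClosedprel} asserts precisely that any $w'$ obtained from $w$ by a single type $2^\star$ operation again lies in $\Omega(\Gamma_0)$. Thus $\Omega(\Gamma_0)$ is closed under type $2^\star$ operations, and an immediate induction shows that every word reached from $w\in\Omega(\Gamma_0)$ by a finite sequence of such operations stays inside $\Omega(\Gamma_0)$.

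For the equivalence it suffices to treat a single step: if I show that whenever $w,w'\in\Omega(\Gamma_0)$ and $w'$ is obtained from $w$ by one type $2^\star$ operation there is a type $2^\star$ operation carrying $w'$ back to $w$, then reversing a sequence term by term (each intermediate word lying in $\Omega(\Gamma_0)$ by the closure statement) gives the forward implication, and the reverse implication follows by symmetry. So I fix one step $w\goesR{2^\star}w'$ and invoke Proposition~\ref{PropOmegaClosedprel} to obtain the index $i$, the $\star$-decompositions $(v_1,\dots,v_{i-1},v_i,v_{i+1},v_{i+2},\dots,v_k)$ of $w$ and $(v_1,\dots,v_{i-1},u_i,u_{i+1},v_{i+2},\dots,v_k)$ of $w'$, and all the notation from its proof. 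In the first case the operation is of type $2r$, sending the factor $v^{-1}v'$ to $uu'^{-1}$ via the relation $vu=v'u'$, with $v=[s,t,j\rangle$, $v'=[t,s,\ell\rangle$ and $j,\ell<m$; matching the notation of Definition~\ref{D:Special2} gives $v_1=v$, $u_1=v'$, $v_2=u$, $u_2=u'$, and these four words are all non-empty (the lengths of $u$ and $u'$ being $m-j$ and $m-\ell$). By the remark following Definition~\ref{D:Special2}, the reverse operation, sending $u_iu_{i+1}=uu'^{-1}$ back to $v^{-1}v'$ by the same relation, is therefore a type $2\ell$ operation. It remains only to verify that this reverse step satisfies the refinement of Definition~\ref{D:Special3}, i.e.\ is of type $2^\star$: unwinding that definition with the factor $uu'^{-1}$ and surrounding words $v_1\cdots v_{i-1}$ and $v_{i+2}\cdots v_k$, its two greatest right common factor conditions reduce exactly to the two letter inequalities $c\neq a$ and $d\neq a'^{-1}$, where $a,a'$ are the last letters of $v,v'$ and $c,d$ are the last letter of $v_{i-1}$ and the first letter of $v_{i+2}$. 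Both inequalities were already established inside the proof of Proposition~\ref{PropOmegaClosedprel}, so the reverse step is of type $2^\star$. The second case, $w_2=vv'^{-1}$ with the forward operation of type $2\ell$, is entirely symmetric, with the roles of $2r$ and $2\ell$ exchanged.

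The one genuinely delicate point is this last verification. What the hypothesis ``$w\goesR{2^\star}w'$'' directly provides are the maximality conditions for the forward $2r$ step, which translate into $c\neq\hat{a}^{-1}$ and $d\neq\hat{a'}$; these are \emph{not} the inequalities $c\neq a$ and $d\neq a'^{-1}$ needed to make the reverse $2\ell$ step a $2^\star$ one. The latter instead follow from the free reducedness of $w$ at the junction between $v_{i-1}$ and $v_i$ and at the junction between $v_{i+1}$ and $v_{i+2}$. It is precisely because the proof of Proposition~\ref{PropOmegaClosedprel} records both pairs of inequalities that the reverse of a $2^\star$ step is again a $2^\star$ step; everything else is bookkeeping, and I expect no further obstacle.
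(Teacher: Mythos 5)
Your proof is correct and takes essentially the same route as the paper, which states this corollary as an immediate consequence of Proposition~\ref{PropOmegaClosedprel} with no further argument: you supply exactly the intended deduction, including the one genuinely non-trivial point that the reverse of a $2^\star$ step is again $2^\star$ because the required inequalities $c\neq a$ and $d\neq a'^{-1}$ come from the free reducedness of $w$ (and are recorded in the proposition's proof), not from the forward maximality conditions. Nothing to add.
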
   

\subsection{Normal decomposition}

In the previous section we have seen that, in the case of a dihedral Artin-Tits group, the set $\Omega(\Gamma_0)$ is closed under operations of type $2^\star$. Here we study the connected components of $\Omega(\Gamma_0)$ under the $2^\star$ operation.
\begin{prop} \label{PropOmegaconnected}Assume that~$A(\Gamma_0)$ is a dihedral Artin-Tits group. 
Assume that $w$ and $w'$ belong to $\Omega(\Gamma_0)$, then, $$w \goesR{2^\star} w' \iff \textrm{$w$ and $w'$ represent the same element in}~A(\Gamma_0).$$
\end{prop}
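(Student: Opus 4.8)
The forward implication is immediate: by Definition~\ref{D:Special3} every $2^\star$-operation is in particular an operation of type~$2$, and such an operation replaces a factor $v_1^{-1}u_1$ by $v_2u_2^{-1}$ (or the reverse) where $u_1u_2 = v_1v_2$ is a relation of $R$. Since $\overline{v_1^{-1}u_1} = \overline{v_2u_2^{-1}}$ holds exactly when $\overline{u_1u_2} = \overline{v_1v_2}$, each such operation preserves the image in $A(\Gamma_0)$; hence $w \goesR{2^\star} w'$ forces $\overline w = \overline{w'}$.

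For the converse, the plan is to produce, for each group element, a distinguished $2^\star$-normal form inside $\Omega(\Gamma_0)$, and to show that every word of $\Omega(\Gamma_0)$ can be driven to it. Concretely, I would first record that $\goesR{2^\star}$ is an equivalence relation on $\Omega(\Gamma_0)$: it is reflexive and transitive by definition, and symmetric by Corollary~\ref{PropOmegaClosed}. By the forward implication each $2^\star$-class is contained in a single fibre of the map $w \mapsto \overline w$, so it suffices to prove that each such fibre (intersected with $\Omega(\Gamma_0)$) is a single $2^\star$-class. For this I would fix a canonical representative $N(g)$ in $\Omega(\Gamma_0)$ of every element $g$ admitting an $\Omega(\Gamma_0)$-representative, and prove the two statements: (a) $w \goesR{2^\star} N(\overline w)$ for every $w \in \Omega(\Gamma_0)$, and (b) that $N(g)$ depends only on $g$, equivalently that the $2^\star$-irreducible word produced by the reduction is unique in each fibre. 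Granting these, if $\overline w = \overline{w'}$ then $w \goesR{2^\star} N(\overline w) = N(\overline{w'})$ and, using the symmetry from Corollary~\ref{PropOmegaClosed}, $N(\overline{w'}) \goesR{2^\star} w'$, whence $w \goesR{2^\star} w'$.

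To construct $N(g)$ and prove reachability (a), I would work entirely with the $\star$-decomposition $(v_1,\ldots,v_k)$, using the fact, established in Proposition~\ref{PropOmegaClosedprel}, that a $2^\star$-operation acts on this sequence as a purely local move, replacing a consecutive pair $v_iv_{i+1}$ of opposite sign by a new pair $u_iu_{i+1}$. Writing $v_i = [s,t,j\rangle^{-1}$ and $v_{i+1}=[t,s,\ell\rangle$ (up to interchanging $s,t$), such a move rewrites the two blocks so that their total length changes by $2m - 2(j+\ell)$. I would orient these moves so as to decrease a well-founded measure --- for instance total length, broken by a lexicographic ordering of the sign pattern --- and check that the oriented moves strictly decrease it until none applies, yielding the normal form as the irreducible word of its class. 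The expected shape of $N(g)$ is the unique geodesic $\Omega(\Gamma_0)$-representative of the required sign pattern, matching the right-coprime form $w_1w_2^{-1}$ of $g$ discussed in the introduction, cut into square-free signed blocks of length $<m$.

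The main obstacle is statement (b), the uniqueness of the irreducible form, i.e.\ confluence of the oriented rewriting: I must show that two $2^\star$-irreducible words of $\Omega(\Gamma_0)$ representing the same element of $A(\Gamma_0)$ are equal. This is where the word problem for the dihedral Artin group genuinely enters, and I would resolve it using the Garside (lattice and cancellativity) structure of the dihedral Artin monoid, together with the explicit normal forms of \cite{MaM}, to pin down geodesics. The delicate point is the interaction of the length-$<m$ constraint defining $\Omega(\Gamma_0)$ with the element $\Delta = [s,t,m\rangle$, which behaves differently according to the parity of $m$ --- it is central for $m$ even but conjugates $s \leftrightarrow t$ for $m$ odd --- and which never appears outright inside an $\Omega(\Gamma_0)$-word.
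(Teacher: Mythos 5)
Your overall strategy is the paper's: the forward direction is the easy one, and for the converse you reduce every word of $\Omega(\Gamma_0)$ by local $2^\star$-moves on its $\star$-decomposition to a sign-sorted normal form and then argue that this form is unique in each fibre of $w\mapsto\overline w$. But two steps of your sketch do not go through as written. First, your termination measure is wrong: as your own computation shows, a $2^\star$-move on a block pair with $j+\ell<m$ \emph{increases} total length by $2m-2(j+\ell)>0$ (e.g.\ for $m=5$ the move $s^{-1}t\rightarrow tsts(stst)^{-1}$ takes length $2$ to length $8$), so ``total length broken by the sign pattern'' is not decreased by the moves you need to perform, and orienting the moves so as to decrease it would prevent you from ever reaching the sign-sorted form. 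The measure that works (and that the paper uses in Proposition~\ref{PropConvNF}) is simply the number of \emph{inversions} of the $\star$-decomposition, i.e.\ of pairs $i<j$ with $v_i$ negative and $v_j$ positive: each move is applied at a negative block immediately followed by a positive block and, by Proposition~\ref{PropOmegaClosedprel}, replaces that pair by a positive block followed by a negative one, so the inversion count drops by exactly one and the process terminates at a word $w_1w_2^{-1}$ with $w_1,w_2$ positive.

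Second, your step (b) --- uniqueness of the irreducible form in its fibre --- is the real mathematical content of the converse, and you leave it as an acknowledged obstacle with only a pointer to ``the Garside structure''. The paper closes it in two moves that you should make explicit: (i) since no type-$0$ or type-$1$ operation applies to any word of $\Omega(\Gamma_0)$, Proposition~\ref{PropCaracNormForm} forces the positive parts $w_1,w_2$ of a sign-sorted word of $\Omega(\Gamma_0)$ to represent exactly the two entries $g_1,g_2$ of Charney's left normal decomposition of $g$ (Proposition~\ref{leftnormaldecomp}); and (ii) the \emph{words} $w_1,w_2$, and not merely the elements they represent, are determined, because by \cite{Par} any two positive words representing the same element are joined by type-$1$ moves, and again no type-$1$ move applies inside $\Omega(\Gamma_0)$, whence $w_1=w_1'$ and $w_2=w_2'$. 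With (i) and (ii) in place, your concluding paragraph (invoking the symmetry from Corollary~\ref{PropOmegaClosed} to run one reduction backwards) is exactly the paper's argument.
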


To prove this result we use the fact that dihedral Artin-Tits groups are contained in the more general family of spherical type Artin-Tits groups. 
We recall that an Artin-Tits group is of spherical type when its associated
Coxeter group is finite, which is the case for dihedral Artin-Tits groups with $m<\infty$
as explained in the previous section.
The crucial argument is the following.

\begin{prop}\label{leftnormaldecomp}\cite{Cha} Let~$A(\Gamma_0)$ be a spherical type Artin
group, and let $g \in A(\Gamma_0)$.  Then there exists a unique
pair of positive elements $(g_1,g_2)$ such that (1) $g = g_1g_2^{-1}$ and (2)
whenever $h_1,h_2$ are positive elements with $g = h_1h_2^{-1}$, then there exists a positive element $h$ with $h_1= g_1h$ and $h_2= g_2h$.  
\end{prop}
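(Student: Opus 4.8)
The statement is a standard consequence of the Garside structure carried by spherical type Artin--Tits monoids, and the plan is to deduce it from the divisibility calculus in the positive monoid $M = A^+(\Gamma_0)$. Throughout I will use the following facts, all established for spherical type in the work cited as \cite{Cha} (going back to Brieskorn--Saito and Deligne): $M$ embeds in $G = A(\Gamma_0)$; $M$ is cancellative and has no nontrivial invertible element; any two elements of $M$ admit a greatest common right divisor $\gcd_R$; and $G$ is the group of two-sided fractions of $M$, so every $g\in G$ can be written as $ab^{-1}$ with $a,b\in M$. I write $a\sqsubseteq b$ for ``$a$ is a right divisor of $b$'', i.e.\ $b=ca$ for some $c\in M$, and call $a,b$ \emph{right-coprime} when $\gcd_R(a,b)=1$. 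The whole point is that the pair $(g_1,g_2)$ demanded by the statement is exactly the right-coprime right fraction for $g$: conditions (1)--(2) will turn out to hold precisely when $g_1,g_2\in M$ are right-coprime with $g=g_1g_2^{-1}$. So the task reduces to producing such a right-coprime right fraction and proving it is unique.

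For existence I would write $g=ab^{-1}$ with $a,b\in M$, set $d=\gcd_R(a,b)$, and factor $a=g_1d$, $b=g_2d$; then $g_1,g_2$ are right-coprime and $g=g_1dd^{-1}g_2^{-1}=g_1g_2^{-1}$. Uniqueness I would derive directly from property (2): if $(g_1,g_2)$ is the pair just constructed and $(g_1',g_2')$ is any pair satisfying (1)--(2), then applying (2) of the first pair with $h_i=g_i'$ gives a positive $h$ with $g_i'=g_ih$, while applying (2) of the second pair with $h_i=g_i$ gives a positive $h'$ with $g_i=g_i'h'$; hence $g_i=g_ihh'$, so $hh'=1$ by cancellation, whence $h=h'=1$ since units are trivial, and $g_i'=g_i$. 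The same computation, run with a common right divisor $d$ of $g_1,g_2$ in place of the comparison pair, shows conversely that any pair satisfying (1)--(2) is right-coprime, confirming the reduction announced above.

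The heart of the matter, and the step I expect to be the main obstacle, is verifying property (2) for the right-coprime pair. Given positive $h_1,h_2$ with $g=h_1h_2^{-1}=g_1g_2^{-1}$, one checks $g_1^{-1}h_1=g_2^{-1}h_2$ in $G$ and lets $h$ be this common value, so that $h_1=g_1h$ and $h_2=g_2h$ hold by construction; everything then hinges on showing $h\in M$. The key lemma I would isolate is: \emph{if $g_1,g_2\in M$ are right-coprime and $h\in G$ satisfies $g_1h,g_2h\in M$, then $h\in M$}. To prove it, write $h=pq^{-1}$ as a right-coprime right fraction (as in the existence step); from $g_ih\in M$ one gets $g_ip=m_iq$ with $m_i\in M$, so $q$ is a common right divisor of $g_1p$ and $g_2p$. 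I would then establish the small identity $\gcd_R(g_1p,g_2p)=p$: since $p\sqsubseteq g_ip$ for $i=1,2$, we have $p\sqsubseteq\gcd_R(g_1p,g_2p)=D'p$ for some $D'\in M$, and cancelling $p$ on the right in $g_ip=e_iD'p$ shows $D'\sqsubseteq g_1$ and $D'\sqsubseteq g_2$, so $D'=1$ by right-coprimality of $g_1,g_2$. Consequently $q\sqsubseteq\gcd_R(g_1p,g_2p)=p$, and with $\gcd_R(p,q)=1$ this forces $q=1$, i.e.\ $h=p\in M$, which completes property (2). The only delicate points are keeping left versus right divisibility and the directions of the fractions consistent and ensuring each cancellation is legitimate; beyond the lattice/Garside properties of $M$ recorded above, no new input is required.
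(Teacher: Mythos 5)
Your proposal is correct, but there is nothing in the paper to compare it against: Proposition~\ref{leftnormaldecomp} is stated as a quoted result, attributed to Charney's paper on injectivity of the positive monoid, and the authors give no proof of it. What you have written is essentially the standard derivation of the left normal (right-coprime fraction) form from the Garside/Ore structure of the spherical-type Artin monoid $M$: existence by dividing out $\gcd_R$ from any fraction $ab^{-1}$, the key lemma that $g_1h,g_2h\in M$ with $g_1,g_2$ right-coprime forces $h\in M$ (via the identity $\gcd_R(g_1p,g_2p)=p$), and uniqueness from cancellativity plus triviality of units. All the inputs you invoke --- embedding of $M$ in $G$, cancellativity, absence of nontrivial invertibles (from homogeneity of the relations), existence of right gcd's, and the two-sided Ore condition giving $G$ as a group of fractions --- are indeed available for spherical type, and each cancellation and divisibility direction in your argument is used consistently. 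So your text is a legitimate self-contained proof of a statement the paper only cites; the one thing worth making explicit if you wrote it out in full is that the expression $g=ab^{-1}$ exists because common left multiples exist in $M$ (e.g.\ via powers of the Garside element), which is exactly where sphericity enters.
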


The above decomposition $g_1g_2^{-1}$  is called the left normal decomposition
of  $g$. 
The following is an immediate consequence of the above proposition.    
\begin{prop}\label{PropCaracNormForm} Assume that~$A(\Gamma_0)$ is a spherical
type Artin group. Let $g$ belong to~$A(\Gamma_0)$.  Assume that $(w_1,w_2)$
is a pair of positive words so that $w_1w_2^{-1}$ is a word representative of~$g$. Denote by $g_1$ and $g_2$ the (positive) elements of~$A(\Gamma_0)$ represented by $w_1$ and $w_2$ respectively. Then, the following are equivalent:
\begin{enumerate}
\item $g_1g_2^{-1}$  is not the left normal decomposition of  $g$,
\item there exist two distinct words $w$ and $w'$ so that  $w_1w_2^{-1}\goesR{1} w\goesR{0} w'$. 
\end{enumerate}
\end{prop}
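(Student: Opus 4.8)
The plan is to translate both conditions into statements about right-divisibility in the Artin-Tits monoid $M$ associated with $A(\Gamma_0)$, and to show that each is equivalent to the existence of a single generator $s\in S$ that right-divides both $g_1$ and $g_2$. Throughout I would use the standard facts for spherical type that $M$ is cancellative with no nontrivial invertible elements, that $M$ embeds in $A(\Gamma_0)$, and that two positive words represent the same element of $M$ if and only if they are connected by a sequence of positive braid relations --- that is, by type~$1$ transformations applied to positive factors.

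First I would reformulate (1). Let $(\tilde g_1,\tilde g_2)$ be the left normal decomposition of $g$ supplied by Proposition~\ref{leftnormaldecomp}. Applying its universal property to the representation $g=g_1g_2^{-1}$ yields a positive element $h$ with $g_1=\tilde g_1 h$ and $g_2=\tilde g_2 h$; by cancellativity $(g_1,g_2)=(\tilde g_1,\tilde g_2)$ exactly when $h=1$. Hence (1) holds iff $h\neq 1$, and since $h$ is a common right divisor of $g_1$ and $g_2$ this gives one direction. For the converse I would start from a common right divisor $d\neq 1$, write $g_1=ad$ and $g_2=bd$ so that $g=ab^{-1}$, and feed this representation into the universal property: were $(g_1,g_2)$ normal we would get $a=g_1h'$ and therefore $dh'=1$ after cancelling $a$, forcing $d=1$, a contradiction. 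Thus (1) is equivalent to $g_1$ and $g_2$ admitting a common right divisor distinct from $1$, and --- since any nontrivial positive element is right-divisible by some generator --- to the existence of a generator $s$ right-dividing both $g_1$ and $g_2$.

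Next I would reformulate (2). Because $w_1$ is positive and $w_2^{-1}$ is negative, no braid relation can straddle their junction: a type~$1$ transformation of $w_1w_2^{-1}$ rewrites either a positive factor inside $w_1$ or a negative factor inside $w_2^{-1}$. Consequently $w_1w_2^{-1}\goesR{1} w$ precisely when $w=p\,q^{-1}$ for positive words $p,q$ representing $g_1,g_2$, and by the presentation of $M$ \emph{every} such pair is reachable. Moreover $w=p\,q^{-1}$ admits a type~$0$ reduction to a distinct word iff it is not freely reduced, and since $p$ is positive while $q^{-1}$ is negative this can only happen at the junction, namely when $p$ and $q$ end in a common letter $s$. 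Therefore (2) holds iff there are positive words representing $g_1$ and $g_2$ ending in a common letter $s$, which is exactly the condition that some generator $s$ right-divides both $g_1$ and $g_2$. Comparing the two reformulations establishes the equivalence.

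The main obstacle is not the algebra but the bookkeeping in this second step: one must verify carefully that type~$1$ transformations act independently on the positive and negative parts and never touch the junction, so that the reachable words are \emph{exactly} the $p\,q^{-1}$, and that the only possible free reduction is the junction cancellation. The passage between words and monoid elements relies throughout on the embedding of $M$ in $A(\Gamma_0)$ and on the fact that equality of positive words is witnessed by braid relations alone.
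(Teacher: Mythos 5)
Your argument is correct and is exactly the intended derivation: the paper states this proposition without proof as an ``immediate consequence'' of Proposition~\ref{leftnormaldecomp}, and your reduction of both conditions to the existence of a generator right-dividing $g_1$ and $g_2$ (via the universal property of the left normal decomposition on one side, and the observation that type~$1$ moves preserve the shape $pq^{-1}$ while free reduction can only occur at the junction on the other) is the natural way to make that explicit. The background facts you invoke (embedding of the monoid, cancellativity, atomicity, and presentation of the monoid by the braid relations) are precisely those the paper uses elsewhere, e.g.\ the citation of \cite{Par} in the proof of Proposition~\ref{PropOmegaconnected}.
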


Using this property, we can deduce the following.
\begin{prop} \label{PropConvNF} Let~$A(\Gamma_0)$ be a dihedral Artin-Tits
group, and suppose that $w \in \Omega(\Gamma_0)$. Denote by $g$ the element
of~$A(\Gamma_0)$ represented by $g$ and by  $(g_1,g_2)$ the pair of positive
elements of~$A(\Gamma_0)$ corresponding to the  
left normal decomposition of  $g$. 

Then,  there exists  a pair of positive words $(w_1,w_2)$  so that the words $w_1$ and $w_2$ represent $g_1$ and $g_2$, respectively, and $$w \goesR{2^\star} w_1w_2^{-1}.$$ \end{prop}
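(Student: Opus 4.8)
The plan is to realise the operations of type $2^\star$ that are of type $2r$ as a ``chunked'' form of right word reversing for the dihedral Artin monoid, use this to drive $w$ to a positive--negative fraction $w_1w_2^{-1}$, and then recognise that fraction as the left normal decomposition by means of Proposition~\ref{PropCaracNormForm}. Concretely, write $g=\overline w$ and call a word \emph{reduced} if it has the form $w_1w_2^{-1}$ with $w_1,w_2$ positive; equivalently, its sequence of maximal signed runs contains no negative run immediately followed by a positive run. I claim that, so long as a word in $\Omega(\Gamma_0)$ is not reduced, a type $2r$ operation of type $2^\star$ can be applied, and that iterating such operations terminates at a reduced word still lying in $\Omega(\Gamma_0)$.

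For applicability, suppose $w\in\Omega(\Gamma_0)$ is not reduced and let $(v_1,\dots,v_k)$ be its $\star$-decomposition. Non-reducedness provides an index $i$ with $v_i$ negative and $v_{i+1}$ positive, sitting at the boundary between a negative run and the following positive run. I would take $v=v_i^{-1}$ and $v'=v_{i+1}$, so that $vu=[s,t,m\rangle$ and $v'u'=[t,s,m\rangle$ (up to swapping $s,t$), with $u,u'$ non-empty because $|v|\le\tilde n(w)<m$ and $|v'|\le\tilde p(w)<m$ in $\Omega(\Gamma_0)$. The point is that this choice already satisfies the greatest-common-factor clauses of Definition~\ref{D:Special3}: the letter of $u$ following $v$ continues the braid word, whereas the letter of $vw_1^{-1}$ following $v$ is the inverse of the last letter of $v_{i-1}$ (vacuous when $i=1$); a short case analysis --- using that consecutive same-sign $\star$-blocks meet in a square, that opposite-sign blocks have opposite final and initial signs, and that $w$ is freely reduced --- shows these two letters always differ, and symmetrically on the $v'$ side. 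Hence a genuine $2^\star$ operation of type $2r$ applies at this junction, and by Proposition~\ref{PropOmegaClosedprel} the resulting word $w'$ again lies in $\Omega(\Gamma_0)$ with $w\goesR{2^\star}w'$.

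Once a reduced word $w_1w_2^{-1}\in\Omega(\Gamma_0)$ is reached, the identification with the normal form is immediate. Indeed $w_1w_2^{-1}\in\Omega(\Gamma_0)$ forces $\tilde p(w_1w_2^{-1})<m$ and $\tilde n(w_1w_2^{-1})<m$, so neither $[s,t,m\rangle$ nor $[t,s,m\rangle$ nor their inverses occurs as a factor; consequently no type-$1$ transformation can be applied to $w_1w_2^{-1}$, and in particular condition (2) of Proposition~\ref{PropCaracNormForm} fails. Since dihedral Artin groups are of spherical type, that proposition then guarantees that $g_1g_2^{-1}$, with $g_1=\overline{w_1}$ and $g_2=\overline{w_2}$, is the left normal decomposition of $g$; by the uniqueness in Proposition~\ref{leftnormaldecomp} these are exactly the elements in the statement, so $(w_1,w_2)$ is the desired pair. (The symmetry of $\goesR{2^\star}$ on $\Omega(\Gamma_0)$ recorded in Corollary~\ref{PropOmegaClosed} is available throughout, should one wish to reverse the direction of the path.)

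The main obstacle is the \emph{termination} of this reversing process, since a single type $2r$ operation need neither shorten the word nor decrease the number of signed runs: when the negative run carries a square, the newly created positive chunk $u$ can be stranded between negative letters, momentarily increasing the run count. I would resolve this exactly as in \cite{DeG}: the dihedral Artin monoid is a Garside monoid with complete, right-complemented presentation, so right word reversing is terminating, and the $2^\star$ operations are precisely the maximal reversing steps, those that avoid creating cancelling pairs $aa^{-1}$ --- this maximality being what the greatest-common-factor clause of Definition~\ref{D:Special3} encodes, and why the intermediate words remain freely reduced and inside $\Omega(\Gamma_0)$. The sequence of reversing steps is then bounded by the reversing grid attached to $w$, giving termination. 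A self-contained alternative would be to exhibit a decreasing lexicographic measure built from the Garside invariants of $g$, but invoking the established termination of reversing is the cleanest route.
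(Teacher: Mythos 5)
Your proposal is correct and follows the same overall architecture as the paper's proof: iterate type $2r$ operations of type $2^\star$ at the negative--positive junctions of the $\star$-decomposition until the word has the shape $w_1w_2^{-1}$, then use the fact that no operation of type $0$ or $1$ applies to a word of $\Omega(\Gamma_0)$ together with Proposition~\ref{PropCaracNormForm} to recognise $\overline{w_1}\,\overline{w_2}^{-1}$ as the left normal decomposition. Where you diverge is on termination, which you flag as ``the main obstacle'' and resolve by appealing to the termination of right word reversing for the (Garside, complemented) dihedral Artin monoid, after identifying $2^\star$ steps with maximal reversing steps. That route works, but it imports machinery from \cite{Deh}/\cite{DeG} and leaves the identification of $2^\star$ operations with chunked reversing steps as a claim needing its own verification. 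The paper's termination argument is much lighter and you overlooked it: define an \emph{inversion} of $w$ to be a pair $i<j$ with $v_i$ negative and $v_j$ positive in the $\star$-decomposition. Proposition~\ref{PropOmegaClosedprel} already tells you that a $2^\star$ step preserves the number $k$ of blocks and merely swaps the signs at positions $i,i+1$ from $(-,+)$ to $(+,-)$; a direct count then shows the number of inversions drops by exactly one at each step, so the process terminates after $\mathrm{Inv}(w)$ steps. Your worry that the number of maximal signed runs can increase is real (the block signs can go from $(-,-,+,+)$ to $(-,+,-,+)$), but it is also irrelevant once one measures progress by inversions rather than runs. Your applicability argument (that the $\star$-blocks at a negative--positive junction automatically satisfy the greatest-common-left/right-factor clauses of Definition~\ref{D:Special3}, by the maximality of the blocks and free reducedness) is correct and matches what the paper asserts, so the only substantive difference between the two proofs is that yours trades an elementary two-line counting argument for a citation to the theory of word reversing.
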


\begin{proof} 
Let $(v_1,\ldots,,v_k)$ be the $\star$-decomposition of $w$. Say that the pair $(i,j)$, with $i<j$, is an inversion of the word $w$ if $v_i$ is a negative word whereas $v_j$  is a positive word. Denote by $\textrm{Inv}(w)$ the number of inversions of the word $w$. 

Assume first $\textrm{Inv}(w) =0$. Then there exists an  index $i$ such that $v_1\cdots v_i$ is a positive (or empty) word and  $v_{i+1}\cdots v_k$ is negative (or empty). 
Since $w \in \Omega(\Gamma_0)$, it follows
(immediately from the definition of $\Omega(\Gamma_0)$) that no operation of
type~$1$ or $0$ can be applied to $w$. Hence, we deduce from
Proposition~\ref{PropCaracNormForm} that the words $v_1\cdots v_i$ and
$(v_{i+1}\cdots v_k)^{-1}$ represent $g_1$ and $g_2$, respectively.   

So now suppose that $\textrm{Inv}(w)\neq 0$. Then there exists an  index
$i$ such that $v_i$ is a negative word whereas $v_{i+1}$  is positive.
By definition of the decomposition $v_1\cdots v_k$, an operation of type
$2^\star$ can be applied to $w$ to replace the word $v_iv_{i+1}$ by the
word $u_iu_{i+1}$ for which $ v_{i+1}u^{-1}_{i+1}=  v_i^{-1}u_i$ is the unique
relation in the presentation of the group~$A(\Gamma_0)$. We obtain the word
 $w' = v_1\cdots v_{i-1}u_iu_{i+1}v_{i+2} \cdots v_k$ . By Proposition~\ref{PropOmegaClosedprel}, the word $w'$ remains in $\Omega(\Gamma_0)$  and $(v_1,\ldots, v_{i-1},u_i,u_{i+1},v_{i+2},\ldots, v_k)$ is the $\star$-decomposition of $w'$.
It is immediate that $\textrm{Inv}(w') = \textrm{Inv}(w)-1$.
Repeating the argument, we deduce there exists a word $w''$ in $\Omega(\Gamma_0)$ such that  $w \goesR{2^\star} w''$ and  $\textrm{Inv}(w'') = 0$. 
Denote by $(u_1,\ldots, u_k)$ the $\star$-decomposition of the obtained word $w''$. By the first case,  there exists an  index $i$ 
such that  the words $u_1\cdots u_i$ and $(u_{i+1}\cdots u_k)^{-1}$ represent $g_1$ and $g_2$, respectively.  This proves the proposition. \end{proof}

We can now prove Proposition~\ref{PropOmegaconnected}.
\begin{proof}[Proof of Proposition~\ref{PropOmegaconnected}]
Assume that $w$ and $w'$ belong to $\Omega(\Gamma_0)$.

Certainly if $w \goesR{2^\star} w'$, then $w$ and $w'$ represent the same
element in $A(\Gamma_0)$.

Conversely, assume $w$ and $w'$ represent the same element $g$, and denote
its  left normal decomposition by $g_1g_2^{-1}$.  By Proposition~\ref{PropConvNF}, there exist positive words $w_1,w'_1,w_2,w'_2$ so that (a) $w_1$ and $w'_1$ represent $g_1$; (b) $w_2$ and $w'_2$ represent $g_2$; (c) $w \goesR{2^\star} w_1w_2^{-1}$ and $w' \goesR{2^\star} w'_1{w'}^{-1}_2$.

From (c) and Corollary~\ref{PropOmegaClosed} we have $ w'_1{w'}^{-1}_2\goesR{2^\star} w'$.
By (a), (b) and \cite{Par}, we have $w_1\goesR{1}w'_1$ and $w_2\goesR{1}w'_2$. 
Since $w$ and $w'$ belong to $\Omega(\Gamma_0)$, the words  $w_1w_2^{-1}$ and $w_1w_2^{-1}$ must too,
by Corollary~\ref{PropOmegaClosed}, and hence
no non-trivial operations of type~$1$ can be applied to them or to their factors. Hence, $w_1 = w'_1$ and $w_2 = w'_2$.  It follows that $w \goesR{2^\star} w_1w_2^{-1} \goesR{2^\star} w'$.
\end{proof}

\subsection{Critical words}\label{seccritword}
For all of this section, we assume that~$A(\Gamma_0)$ is a dihedral Artin group
with $S = \{s,t\}$ and $m = m_{s,t} \neq\infty$.
Set $\Delta = \overline{[s,t, m\rangle} =  \overline{[t,s, m\rangle}$. Then,
$\Delta   \overline{s}^{\pm 1}=  \overline{\delta(s)}^{\pm 1}\Delta$ and $\Delta  \overline{t}^{\pm 1} =  \overline{\delta(t)}^{\pm 1}\Delta $,
where $\delta$ is the permutation of $S$ defined by $\delta(s) = \overline{s^\Delta}$,$\delta(t)=\overline{t^\Delta}$, acting as the identity when $m$ is even, and swapping $s$ and $t$ when $m$ is odd.
The map $\delta$ naturally extends to a involution $\delta$ on
$(S\cup S^{-1})^*$ such that for every word $v$ on $(S\cup S^{-1})^*$ one has $\Delta \overline{v} = \overline{\delta(v)}\Delta$. 
 
We recall that for a group $G$ generated by a set $S$,  a \emph{geodesic word
representative} of an element $g$ is a representative word of $g$ over
$S\cup S^{-1}$  that is of minimal length amongst the representative words
of $g$. Given an arbitrary Artin-Tits group, it is an open question to
obtain an algorithm that for each word returns a geodesic word representing the same element.
However, in the case of dihedral Artin-Tits groups, and in the more general case of Artin-Tits groups of large type, a positive answer has been obtained in~\cite{MaM} and in~\cite{HoR}, respectively.
  
More precisely, it is proved in~\cite{MaM}, in a dihedral group $A_S$,  a freely
reduced word~$w$ over $s,t$ is geodesic if and only if  $p(w)+n(w)\leq m$.  In addition in~\cite[Section 2]{HoR}, the authors introduced the notion of \emph{critical word}. A critical word may be signed or unsigned; for now we restrict to unsigned critical words.
\begin{defi} Assume~$A(\Gamma_0)$ is a dihedral Artin group. A freely
reduced unsigned geodesic word $w$ with $p(w)+q(w) = m$ is called an \emph{unsigned  critical word} if   either $$w = [x,y, p\rangle\ w'\  [z,t, q\rangle^{-1}$$  or $$w = [x,y, q\rangle^{-1}\ w'\  [z,t, p\rangle$$ with $\{x,y\} = \{z,t\} = S$, $p = p(w)$ and $q = q(w)$. \end{defi}

Obviously the conditions $p(w)=p,n(w)=n$ impose some restrictions
on the word $w'$.

Recall that $[x,y, p\rangle = \langle x',y', p]$ where $\{x,y\} = \{x',y'\}$,
and  that $[z,t, q\rangle = \langle z',t', q]$ where $\{z,t\} = \{z',t'\}$.
Now, let  $w = [x,y, p\rangle\ w'\  [z,t, q\rangle^{-1}$ be a word over $(S\cup S^{-1})$ such that $p+q = m$.
One has 
$$\displaylines{\overline{w} = \overline {[x,y, p\rangle\, w'\,  [z,t, q\rangle^{-1}} 
= \overline{\langle x,y, q]^{-1}\,\, \Delta \, w'\,  [z,t, q\rangle^{-1}}
= \overline{\langle x,y, q]^{-1}\,\, \delta(w')\Delta\,  [z,t, q\rangle^{-1}} \cr\hfill\cr
= \overline{\langle x,y, q]^{-1}\, \delta(w')\,  \langle z,t,p]} }$$  
Set 
$$\tau\left([x,y, p\rangle\ w'\  [z,t, q\rangle^{-1}\right) 
= \langle x,y, q]^{-1}\, \delta(w')\,  \langle z,t,p];$$  
then one has $\overline{\tau(w)} = \overline{w}$. 
So, $w$ is unsigned and geodesic if and only if $\tau(w)$ is. 
Moreover,  $w$ is an unsigned  critical word with $p= p(w)$ and $q = q(w)$ 
if and only if $\tau(w)$ is an unsigned  critical word  with $p= p(\tau(w))$ 
and $q = q(\tau(w))$. 
In this case, we also set $$\tau \left(\langle x,y, q]^{-1}\, \delta(w')\,  \langle z,t,p]\right) = [x,y, p\rangle\ w'\  [z,t, q\rangle^{-1}.$$ 
Then $\tau$ is an involution over the set of unsigned critical words. 

The above is proved, together with other properties of critical words
in \cite[Proposition 2.1]{HoR}. From those properties
we shall  also need the following, in Section~\ref{secTransv}:

\begin{lemm} \label{Lemtauetgen}Assume that~$A(\Gamma_0)$ is a dihedral Artin group with generating set $S = \{s,t\}$. 
Let $w$ be an unsigned critical word. The rightmost letter of $w$ is $s^{\pm 1}$ if and only if the rightmost letter of $\tau(w)$ is $t^{\mp 1}$. 
\end{lemm}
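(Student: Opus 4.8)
The plan is to prove the statement by a direct reading of the two outermost letters off the explicit shapes of an unsigned critical word and off the closed formula for $\tau$ recorded just before the lemma, so that no new machinery is required. First I would note that $\tau$ is an involution that interchanges the two admissible shapes: it sends a word $[x,y,p\rangle\,w'\,[z,z',q\rangle^{-1}$ (I write the right-hand alternating pair as $z,z'$ with $\{z,z'\}=S$, to avoid a clash with the generator $t$) to $\langle x,y,q]^{-1}\,\delta(w')\,\langle z,z',p]$, which is of the second shape, and conversely. Consequently it is enough to carry out the computation for one shape, say $w=[x,y,p\rangle\,w'\,[z,z',q\rangle^{-1}$; the other shape is then handled by the identical reading of outermost letters (equivalently, by applying this case to $\tau(w)$ and using $\tau^2=\mathrm{id}$), with the roles of positive and negative letters simply exchanged.

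For such a $w$ the main step is purely combinatorial. Since $w$ is genuinely unsigned it contains both a positive and a negative letter, so neither $p=p(w)$ nor $q=q(w)$ vanishes; hence both end blocks are nonempty and really do carry the outermost letters of $w$ and of $\tau(w)$. The rightmost letter of $w$ is therefore the rightmost letter of the negative block $[z,z',q\rangle^{-1}$, which is the formal inverse of the first letter of $[z,z',q\rangle$, namely $z^{-1}$. On the other side, the formula $\tau(w)=\langle x,y,q]^{-1}\,\delta(w')\,\langle z,z',p]$ shows that the rightmost letter of $\tau(w)$ is the last letter of the nonempty block $\langle z,z',p]$, which by the convention for $\langle\,\cdot,\cdot,\cdot\,]$ (its last letter being the second listed one) is $z'$.

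It remains to combine these two readings. Because $\{z,z'\}=S=\{s,t\}$ with $z\neq z'$, the rightmost letter of $w$ equals $s^{-1}$ exactly when $z=s$, equivalently when $z'=t$, equivalently when the rightmost letter of $\tau(w)$ equals $t^{+1}$; and symmetrically $t^{-1}$ for $w$ corresponds to $s^{+1}$ for $\tau(w)$. This is precisely the asserted equivalence for words of the first shape (where the rightmost letter of $w$ is negative and that of $\tau(w)$ positive); the mirror shape, obtained through the involution, supplies the complementary positive-for-$w$, negative-for-$\tau(w)$ cases. I expect the only real difficulty to be bookkeeping: keeping the conventions for $[\,\cdot,\cdot,\cdot\,\rangle$, for $\langle\,\cdot,\cdot,\cdot\,]$ and for formal inverses straight, confirming that $p,q\geq 1$ so that the end blocks genuinely carry the outermost letters, and navigating the notational overlap between the generator $t$ and the bracket data. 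There is no conceptual obstacle beyond this verification.
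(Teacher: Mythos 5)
Your proof is correct. The paper itself gives no argument for this lemma (it is quoted as a consequence of \cite[Proposition 2.1]{HoR}), and your direct reading of the outermost letters from the two displayed shapes of an unsigned critical word and from the formula for $\tau$ is exactly the verification one would expect: the key points — that unsignedness forces $p,q\geq 1$ so the end blocks are nonempty, that the last letter of $[z,z',q\rangle^{-1}$ is $z^{-1}$ while the last letter of $\langle z,z',p]$ is $z'$, and that the second shape follows from the first via $\tau^2=\mathrm{id}$ and the symmetry of the statement in $w$ and $\tau(w)$ — are all handled properly.
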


\begin{lemm} \label{Lemtauet2star}Assume~$A(\Gamma_0)$ is a dihedral Artin group with generating set $S = \{s,t\}$. Let $w$ be an unsigned critical word.
Then $w \goesR{2^\star} \tau(w)$.
\end{lemm}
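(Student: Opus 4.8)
The plan is to deduce the lemma directly from Proposition~\ref{PropOmegaconnected}, which asserts that two words of $\Omega(\Gamma_0)$ are connected by a sequence of $2^\star$-operations precisely when they represent the same element of $A(\Gamma_0)$. Since it has already been established that $\overline{\tau(w)} = \overline{w}$, the entire lemma reduces to the single verification that both $w$ and $\tau(w)$ lie in $\Omega(\Gamma_0)$; once this is in hand, Proposition~\ref{PropOmegaconnected} yields $w \goesR{2^\star} \tau(w)$ with no further work.

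The key step is therefore the membership claim. Recall that $\Omega(\Gamma_0)$ consists of the freely reduced words $v$ with $\max(p(v),n(v)) < m$. An unsigned critical word $w$ is freely reduced by definition and satisfies $p(w) + n(w) = m$, so what must be checked is that both $p(w) \geq 1$ and $n(w) \geq 1$. Since $w$ is unsigned it contains at least one positive and at least one negative letter, each of which is by itself a square-free signed factor of length $1$; hence $\tilde{p}(w), \tilde{n}(w) \geq 1$ and so $p(w), n(w) \geq 1$. Combined with $p(w) + n(w) = m$, this forces $\max(p(w),n(w)) \leq m-1 < m$, and therefore $w \in \Omega(\Gamma_0)$. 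Because $\tau(w)$ is again an unsigned critical word, with $p(\tau(w)) = p(w)$ and $n(\tau(w)) = n(w)$, the identical argument gives $\tau(w) \in \Omega(\Gamma_0)$.

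Having placed both words in $\Omega(\Gamma_0)$ and knowing that they represent the same group element, I would then invoke Proposition~\ref{PropOmegaconnected} to complete the proof. I do not expect a serious obstacle here: the only point requiring genuine care is the strict inequality $\max(p(w),n(w)) < m$, which would fail if a critical word could have an all-positive (resp.\ all-negative) square-free part of the full length $m$. This degenerate situation is ruled out exactly by the hypothesis that $w$ is unsigned, since $p(w) = 0$ (resp.\ $n(w) = 0$) together with $p(w)+n(w)=m$ would force $w$ to be a signed word, contrary to assumption.
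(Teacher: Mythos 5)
Your proposal is correct and follows essentially the same route as the paper's own proof: both arguments show that $w$ and $\tau(w)$ lie in $\Omega(\Gamma_0)$ by combining free reducedness with $p+n=m$ and the strict positivity of $p$ and $n$ forced by unsignedness, and then conclude via Proposition~\ref{PropOmegaconnected}. Your explicit justification of $p(w),n(w)\geq 1$ is a slightly fuller version of the step the paper states without comment.
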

\begin{proof} The words $w$ and $\tau(w)$ are unsigned critical words. In
particular $p(w)+q(w) = m$ with $p(w)\neq 0$ and $q(w)\neq 0$.
So $w$ is a freely reduced word so that $p(w)< m$ and $q(w)<m$, and hence $w$ belongs to
$\Omega(\Gamma_0)$. But $\tau(w)$ is also a reduced word with $p(\tau(w)) = p(w)< m$ and $q(\tau(w))= q(w)<m$, and hence also $\tau(w)$ belongs to $\Omega(\Gamma_0)$, too. Since $\overline{\tau(w)} = \overline{w}$, Proposition~\ref{PropOmegaconnected} proves  that $w \goesR{2^\star} \tau(w)$.
\end{proof}

In \cite{HoR}, the authors also introduced a notion of a positive (\emph{resp.} negative) critical word. If~$A(\Gamma_0)$ is a dihedral Artin group with generating set $S = \{s,t\}$,  a positive word $w$ with $\tilde{p}(w)= p(w) =m$ is called critical if it has the
form $[x,y,m\rangle w'$  or $w'\langle m,x,y ]$ with $\{x,y\} = \{s,t\}$ and $p(w')<m$.
Similarly a negative word $w$ with $\tilde{n}(w)= n(w)=m$ is
called critical  if it has the form
form $[x^{-1},y^{-1},m\rangle w'$  or $w'\langle m,x^{-1},y^{-1} ]$, with $\{x,y\} = \{s,t\}$ and $n(w')<m$. 
Further, the definition of $\tau$ can be extended to that of an involution
on critical words that maps
each positive, resp. negative critical word to another positive resp. negative
critical word representing the
same group element. 
More precisely, if $w = [x,y,m\rangle w'$ is a positive critical word, 
then $\overline{[x,y,m\rangle w'} = \overline{\delta(w')\langle x,y,m]} 
= \overline{\delta(w')\langle y,x,m]}$. 
 Only one of the two words $\delta(w')\langle x,y,m]$ and $\delta(w')\langle y,x,m]$ has different last letter from $w$.
If $w'$ is not empty it is also the only one of these two words that is critical. We define $\tau(w)$ to be this
word. We define $\tau(w)$ for a negative critical word $w=w' \langle x,y,m]$
in a similar way. It is easy to see that $\tau$ remains an involution.

It is straightforward to prove the following, and so we omit the details:
\begin{lemm}\label{Lemtauet1}  Assume that~$A(\Gamma_0)$ is a dihedral Artin group with generating set $S = \{s,t\}$.  For any positive or negative critical word $w$,
$w \goesR{1} \tau(w)$.   
\end{lemm}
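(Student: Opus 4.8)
The plan is to reduce the statement to the fact --- already used in the proof of Proposition~\ref{PropOmegaconnected}, and coming from \cite{Par} --- that two \emph{positive} words representing the same element of the dihedral Artin--Tits group $A(\Gamma_0)$ are connected by a sequence of type~$1$ transformations. I would first treat a positive critical word $w$; the negative case then follows formally. Indeed, if $w$ is a negative critical word, then $w^{-1}$ is a positive critical word, and since the definition of $\tau$ on negative words is the formal mirror of the one on positive words we have $\tau(w^{-1}) = \tau(w)^{-1}$. Recalling that the type~$1$ transformations attached to a relation $u=v$ include both $v\to u$ and $v^{-1}\to u^{-1}$, any type~$1$ path may be inverted into a type~$1$ path; hence $w^{-1}\goesR{1}\tau(w^{-1})$ yields $w\goesR{1}\tau(w)$.

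So let $w = [x,y,m\rangle\, w'$ be a positive critical word with $\{x,y\}=S$ and $p(w')<m$ (the suffix form $w = w'\langle x,y,m]$ is entirely analogous). By definition $\tau(w)$ is whichever of the two words $\delta(w')\langle x,y,m]$ and $\delta(w')\langle y,x,m]$ is critical and has last letter different from that of $w$. In either case $\tau(w)$ is again a \emph{positive} word: $w'$ is positive, the involution $\delta$ preserves positivity, and each of the $\Delta$-blocks $\langle x,y,m]$ and $\langle y,x,m]$ is positive. Moreover the displayed computation preceding the lemma, which uses $\Delta\,\overline v = \overline{\delta(v)}\,\Delta$ together with $\overline{[x,y,m\rangle} = \overline{\langle x,y,m]} = \Delta$, shows that $\overline{\tau(w)} = \overline{w}$. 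Thus $w$ and $\tau(w)$ are two positive words representing the same element of $A(\Gamma_0)$.

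It then remains only to invoke \cite{Par}. Since the natural map from the Artin--Tits monoid of $\Gamma_0$ into $A(\Gamma_0)$ is injective, the positive words $w$ and $\tau(w)$, being equal in the group, are equal in the monoid, and hence are connected by a finite sequence of applications of the single positive braid relation $[s,t,m\rangle = [t,s,m\rangle$. Each such application is a type~$1$ transformation and keeps the current word positive, so I obtain $w \goesR{1} \tau(w)$, exactly as claimed.

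There is essentially no hard step here --- which is why the details are omitted in the statement --- and the only points requiring any care are the verification that $\tau(w)$ is genuinely a positive word representing $\overline w$ (so that the monoid embedding applies), together with the routine bookkeeping reducing the suffix form and the negative-word case to the single case treated above.
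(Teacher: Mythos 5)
Your proof is correct, and it fills in the details the paper explicitly omits ("straightforward") in exactly the way the paper intends: since $w$ and $\tau(w)$ are both positive (or both negative) and represent the same group element, the injectivity of the Artin--Tits monoid \cite{Par} reduces the claim to the fact that positive words equal in the monoid are joined by applications of the defining relation, i.e.\ by type~$1$ transformations --- the same argument the paper itself invokes inside the proof of Proposition~\ref{PropOmegaconnected}. The reductions of the suffix form and of the negative case to this one are routine, as you say.
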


 We now define the special transformation $\beta$ in the  case of dihedral Artin-Tits groups: 
\begin{defi} \label{D:Specialbeta}
Assume that~$A(\Gamma_0)$ is a dihedral Artin group with generating set $S = \{s,t\}$. Let $\tau$ be the above involution on the set of critical words.  We define \emph{special transformations} of type~$\beta$ on words on $S\cup S^{-1}$ as follows:\\
\emph{type $\beta$} : Replace some factor $w$ with~$\tau(w) $, where $w$ is a  critical word.\\
\end{defi}
From this definition and Lemmas~\ref{Lemtauet2star} and~\ref{Lemtauet1}, it follows:
\begin{lemm}\label{Lemtauet2}  Assume that~$A(\Gamma_0)$ is a dihedral Artin group with generating set $S = \{s,t\}$. For any two words $w, w'$, one has 
$$w \goesR{\beta} w'\implies w \goesR{1,2^\star} w'.$$   
\end{lemm}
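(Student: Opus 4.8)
The plan is to reduce the statement to the two preceding lemmas by a case analysis on the nature of the critical factor. First I would unwind the meaning of a single transformation of type~$\beta$: it replaces a critical factor $c$ of a word by $\tau(c)$, so that $w=w_1cw_2$ and $w'=w_1\tau(c)w_2$ for some (possibly empty) words $w_1,w_2$, with $c$ critical. Since the relation $\goesR{1,2^\star}$ is transitive, and a witnessing transformation $w \goesR{\beta} w'$ is by definition a finite sequence of such single steps, it is enough to establish the implication for one step and then compose.

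For the single step I would split according to whether $c$ is signed or unsigned. If $c$ is positive or negative, Lemma~\ref{Lemtauet1} gives $c \goesR{1} \tau(c)$ by a single application of the defining relation. Because a transformation of type~$1$ merely substitutes a factor matching one side of a relation by the other side (Definition~\ref{D:Special}), and this matching is insensitive to the neighbouring letters, the very same transformation applies to the factor $c$ sitting inside $w_1cw_2$; hence $w \goesR{1} w'$ and a fortiori $w \goesR{1,2^\star} w'$. This case is immediate precisely because type~$1$ transformations are context-free.

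If $c$ is unsigned, Lemma~\ref{Lemtauet2star} provides a finite chain $c \goesR{2^\star} \tau(c)$, running through words of $\Omega(\Gamma_0)$. I would then replay this chain on the factor $c$ inside $w_1cw_2$. Here lies the main obstacle: transformations of type~$2^\star$ are context-sensitive, since the conditions in Definition~\ref{D:Special3} require the pivots $v,v'$ to be greatest common factors computed against the words immediately to their left and right in the whole ambient word, not merely within $c$. Thus each step of the standalone chain must be checked to remain of type~$2^\star$ after $w_1$ is prepended and $w_2$ appended. I expect to argue that the letter at which the greatest common factor defining $v$ (respectively $v'$) first diverges already occurs strictly inside the current word obtained from $c$, so that prepending $w_1$ (respectively appending $w_2$) cannot enlarge that greatest common factor and the step stays of type~$2^\star$. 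The delicate sub-case, where I would concentrate the work, is a step acting at the boundary of the evolving factor, where one of the internal contexts is empty; here I would exploit the rigid shape of an unsigned critical word, namely that it begins with a full block $[x,y,p\rangle$ and ends with a full block $[z,t,q\rangle^{-1}$ with $p+q=m$, to show that such boundary blocks are only consumed once the interior has been resolved, keeping every step genuinely of type~$2^\star$ in context.

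With both cases in hand, a single $\beta$-step satisfies $w \goesR{1,2^\star} w'$; composing over the finitely many steps of an arbitrary $\beta$-transformation and invoking transitivity yields the lemma. The heart of the argument is the context-compatibility verification for the unsigned case, whereas the signed case and the bookkeeping of the composition are routine.
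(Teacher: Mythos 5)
Your overall route --- reduce to a single $\beta$-step, split on whether the critical factor is signed or unsigned, and invoke Lemma~\ref{Lemtauet1} in the signed case (harmless, since type~$1$ moves are context-free) and Lemma~\ref{Lemtauet2star} in the unsigned case --- is exactly the route the paper takes; indeed the paper offers no further argument, presenting the lemma as an immediate consequence of those two lemmas. You have correctly isolated the one point that is genuinely not immediate: type~$2^\star$ moves are context-sensitive, so the chain $c \goesR{2^\star} \tau(c)$ supplied by Lemma~\ref{Lemtauet2star} for the standalone critical word $c$ need not consist of $2^\star$ moves once $c$ is embedded in a larger word.

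However, your proposed resolution of that point --- that prepending or appending the ambient context cannot enlarge the relevant greatest common factors, and that the boundary blocks of $c$ are only consumed once the interior is resolved --- fails. Take $m=3$, $c=sts^{-1}$ (critical, with $p(c)=2$, $n(c)=1$ and empty interior), so $\tau(c)=t^{-1}st$, and embed it as $w=t\,sts^{-1}$. The standalone step $c\goesR{2^\star}\tau(c)$ is the type~$2\ell$ replacement of $vv'^{-1}=(st)(s)^{-1}$ by $t^{-1}st$; it is $2^\star$ for $c$ alone because the prefix preceding the factor is then empty, but inside $w$ that prefix is $t$, so $w_1v=tst=uv$ and the greatest right common factor jumps from $st$ to $tst$: the move is of type~$2$ but not of type~$2^\star$, contradicting your claim that the context cannot enlarge the common factor. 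Worse, no detour repairs this: an exhaustive check shows that the only words reachable from $tsts^{-1}$ by type~$1$ and type~$2^\star$ moves are $stss^{-1}$ and $st$, so $w'=tt^{-1}st$ is not reachable at all (although $w'\goesR{2^\star}w$ does hold --- the relation is not symmetric outside $\Omega(\Gamma_0)$). So the unsigned case cannot be handled by replaying the standalone chain, and the difficulty is essential precisely when the $\beta$-move creates a free cancellation with the context, which is its principal use in Proposition~\ref{PropHoR1}. What is immediate, and what the downstream results actually use, is the context-free weakening $w\goesR{\beta}w'\implies w\goesR{1,2}w'$ (every type~$2^\star$ move is a type~$2$ move, and type~$2$ ignores context), supplemented by the observation that the composite ``$\beta$-move followed by free reduction'' is realised by a single larger $2^\star$ move (in the example, $tsts^{-1}\goesR{2^\star}st$ directly).
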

\begin{prop}\cite[Sec.~2]{HoR}\label{PropHoR1}   Assume that~$A(\Gamma_0)$ is
a dihedral Artin group.  Let $w$ be a word over $S\cup S^{-1}$. Then there
exists a
geodesic word $w'$  such that  $\overline{w} = \overline{w'}$ and $$w \goesR{0,\beta} w'.$$  
\end{prop}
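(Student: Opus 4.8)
The plan is to argue by induction on the length $|w|$, using the geodesic criterion of \cite{MaM} — that a freely reduced word over $\{s,t\}$ is geodesic if and only if its $p$-value plus its $n$-value is at most $m$ — to decide when to stop, and using a single type~$\beta$ move followed by a free reduction to shorten $w$ whenever it is not geodesic. First I would apply operations of type~$0$ to freely reduce $w$; this does not change $\overline{w}$ and can only decrease $|w|$, so I may assume $w$ is freely reduced. If $w$ is geodesic there is nothing to prove, so suppose it is not; by \cite{MaM} this means $p(w)+n(w)>m$.

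The core of the argument is a \emph{shortening step}: I claim that a freely reduced, non-geodesic word admits a factorization $w=A\,c\,B$ in which $c$ is an unsigned critical factor with $p(c)+n(c)=m$, positioned so that applying the involution $\tau$ to $c$ creates a free reduction at (at least) one of its two junctions with $A$ or $B$. Granting this, replacing $c$ by $\tau(c)$ is an operation of type~$\beta$, and since $\tau$ preserves both the length and the represented element (see Lemma~\ref{Lemtauet2star}) we obtain $A\,\tau(c)\,B$ with $\overline{A\,\tau(c)\,B}=\overline{w}$ and $|A\,\tau(c)\,B|=|w|$. Now $\tau(c)$ is again an (unsigned) critical word, hence freely reduced, so the only places where a type~$0$ reduction can newly apply are the two junctions; by the claim at least one does, and a single type~$0$ operation then strictly decreases the length. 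Writing $w''$ for the result, we have $w\goesR{0,\beta}w''$, $\overline{w''}=\overline{w}$ and $|w''|<|w|$, and the induction hypothesis applied to $w''$ completes the proof.

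To see why a junction reduces, write $c=[x,y,p\rangle\,w'\,[z,t,q\rangle^{-1}$ with $p=p(c)$ and $q=n(c)$. Reading off the leftmost letter from the defining formula $\tau(c)=\langle x,y,q]^{-1}\,\delta(w')\,\langle z,t,p]$, the first letter of $\tau(c)$ is $y^{-1}$, the inverse of the generator other than $x$; symmetrically, by Lemma~\ref{Lemtauetgen} its rightmost letter is the opposite-signed switch of the last letter of $c$. If the maximal positive square-free run carrying the leading block $[x,y,p\rangle$ extends one further letter to the left of $c$, that letter is $y$, because the run alternates, and $y\cdot y^{-1}$ cancels after the $\beta$ move; the trailing negative run is handled the same way on the right. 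Thus everything reduces to producing a critical factor $c$ at least one of whose two bounding runs is, \emph{inside $w$}, strictly longer than its critical share $p$ or $q$ — the overshooting letter being exactly the one that cancels.

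The main obstacle is precisely this existence-and-positioning claim, and this is where I expect the real work to lie and where I would lean on the structural analysis of critical words in \cite[Sec.~2]{HoR}. The difficulty is that $p(w)$ and $n(w)$ are \emph{global} maxima and need not be realised by two adjacent runs, so $c$ may have a nontrivial middle word $w'$ bridging a positive block and a negative block that are not consecutive in $w$: for instance $w=sts\,t^{-1}st^{-1}s^{-1}t^{-1}$ with $m=5$ is non-geodesic, yet no sign transition in $w$ has its two adjacent alternating runs already summing to $m$ (the successive sums are $4$, $2$, $4$), and the shortening critical factor is $c=tst^{-1}st^{-1}s^{-1}t^{-1}$, whose leading run $ts$ overshoots only after one takes into account the $s$ standing to its left. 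One must therefore extract, from the single inequality $p(w)+n(w)>m$, a factor $c$ with $p(c)+n(c)=m$ \emph{together with} the guarantee that one of its bounding runs overshoots inside $w$; once this combinatorial lemma is established, the remainder is the routine bookkeeping with the type~$0$ and $\beta$ operations and the length induction sketched above.
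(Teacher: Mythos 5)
Your proposal is correct and follows essentially the same route as the paper: the paper's proof simply invokes \cite[Lemma 2.3]{HoR} for exactly the claim you isolate as the crux (a freely reduced non-geodesic word contains a critical factor whose replacement by its $\tau$-image fails to be freely reduced), and then iterates one type~$\beta$ move plus free reduction, decreasing length each time. Your additional detail on why a junction cancels, and your observation that the critical factor need not sit on two adjacent alternating runs, is a faithful unpacking of what \cite{HoR} establishes rather than a different argument.
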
 
\begin{proof} 
The result follows immediately from \cite[Lemma 2.3]{HoR},
from which we deduce that if $w$ is freely reduced  but non-geodesic then
it must possess a
critical word $w_1$ as a factor, and then the word $w'$ obtained from $w$ by
replacing the factor $w_1$ with $\tau(w_1)$ is not freely reduced. So a
combination of free reduction and at most one type $\beta$ transformation reduces $w$ to $w'$.
\end{proof}
Combining Lemma~\ref{Lemtauet2} and Proposition~\ref{PropHoR1} we get the following corollary.
\begin{coro}\label{coroHstardih}
Assume that~$A(\Gamma_0)$ is a dihedral Artin group with $S$ as generating set.  Let $w$ be a word over $S\cup S^{-1}$. There exists a geodesic word $w'$  such that  $\overline{w} = \overline{w'}$ and $$w \goesR{0,1,2^\star} w'.$$   
\end{coro}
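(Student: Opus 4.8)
The plan is to derive the corollary directly by composing the two results just established, since no new group-theoretic input is needed. First I would invoke Proposition~\ref{PropHoR1}: applied to an arbitrary word $w$ over $S\cup S^{-1}$, it furnishes a \emph{geodesic} word $w'$ with $\overline{w}=\overline{w'}$ together with a rewriting path $w \goesR{0,\beta} w'$. This already produces the target geodesic $w'$ and the equality of group elements demanded by the statement; what remains is purely to convert a path that uses transformations of types $0$ and $\beta$ into one using transformations of types $0$, $1$ and $2^\star$, with no $\beta$-step surviving.

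To carry out that conversion, I would unfold the relation $\goesR{0,\beta}$ as a finite sequence of single elementary transformations $w = w^{(0)} \to w^{(1)} \to \cdots \to w^{(N)} = w'$, each of which is individually of type $0$ or of type $\beta$. The type-$0$ steps are already admissible steps of a $0,1,2^\star$ path, so I would leave them untouched. For each type-$\beta$ step $w^{(j)} \to w^{(j+1)}$, I would invoke Lemma~\ref{Lemtauet2}, which guarantees a subpath $w^{(j)} \goesR{1,2^\star} w^{(j+1)}$; splicing these subpaths in place of the corresponding $\beta$-steps and using transitivity of the reachability relation $\goesR{}$ yields a single path $w \goesR{0,1,2^\star} w'$, which is exactly the conclusion.

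The argument is essentially bookkeeping, so I do not expect a genuine obstacle; the one point deserving care is the logical status of the arrow $\goesR{X}$. Since it denotes reachability by a sequence of single transformations whose types all lie in the set $X$, the splicing step is legitimate precisely because this relation is transitive and because the union of the type-sets allowed at the individual stages ($0$, $1$, and $2^\star$) is exactly $\{0,1,2^\star\}$. In particular every type-$\beta$ transformation has been replaced, so none survives in the final path — which is what makes the statement worthwhile, namely that the infinite repertoire of type-$\beta$ transformations has been traded for the finite repertoire of types $0$, $1$ and $2^\star$.
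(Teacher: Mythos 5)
Your argument is exactly the paper's: the corollary is obtained by combining Proposition~\ref{PropHoR1} (which yields a geodesic $w'$ with $w \goesR{0,\beta} w'$) with Lemma~\ref{Lemtauet2} (which replaces each type-$\beta$ step by a $1,2^\star$ path). The splicing/transitivity bookkeeping you spell out is implicit in the paper's one-line ``combining'' statement, so your proposal is correct and follows the same route.
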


Transformations of type $2^\star$ are special cases of transformations of type~$2$ and the unique geodesic representative word of $1$ is the empty word $\varepsilon$. Therefore as a consequence of the above corollary,  we recover :
\begin{coro}\cite{DeG} \label{PropHDih}Assume~$A(\Gamma_0)$ is a dihedral Artin group with $S$ as generating set.  Let $w$ be a word over $S\cup S^{-1}$.  Then, $$w \gozotR \varepsilon \iff \overline{w} = 1$$ 
\end{coro}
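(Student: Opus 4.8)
The plan is to deduce the statement almost immediately from Corollary~\ref{coroHstardih}, which already carries the entire substance; what remains is a short bookkeeping argument. I would first dispose of the forward implication: if $w \gozotR \varepsilon$ then $\overline{w}=1$. This is immediate and was already observed in the introduction, since each special transformation of type $0$, $1$ or $2$ replaces a word by another word representing the same element of $A(\Gamma_0)$ --- type $0$ inserts or deletes a trivial factor $s^{\pm1}s^{\mp1}$, type $1$ applies a defining relation, and type $2$ applies a relation directly derived from the braid relation. Hence $\overline{w}=\overline{\varepsilon}=1$.

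For the converse, suppose $\overline{w}=1$. By Corollary~\ref{coroHstardih} there exists a geodesic word $w'$ with $\overline{w'}=\overline{w}$ and $w \goesR{0,1,2^\star} w'$. Since $\overline{w}=1$, the word $w'$ is a geodesic representative of the identity element. The key (and only genuinely new) observation is that the empty word $\varepsilon$ is the unique geodesic representative of $1$: any nonempty word has length at least $1$, whereas $\varepsilon$ has length $0$, so any word of minimal length representing $1$ must be empty. Therefore $w'=\varepsilon$, which gives $w \goesR{0,1,2^\star} \varepsilon$.

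Finally I would record that, by Definition~\ref{D:Special3}, every transformation of type $2^\star$ is in particular a transformation of type $2$. Consequently the derivation $w \goesR{0,1,2^\star} \varepsilon$ is also a derivation $w \gozotR \varepsilon$, completing the converse and hence the equivalence.

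There is essentially no obstacle at this stage: all the difficulty has already been absorbed into Corollary~\ref{coroHstardih} (and, behind it, into the analysis of the $\star$-decomposition, of $\Omega(\Gamma_0)$, and of critical words in the dihedral case). The only two points worth stating explicitly are the inclusion of type $2^\star$ transformations among type $2$ transformations and the uniqueness of $\varepsilon$ as the geodesic representative of the identity, both of which are routine.
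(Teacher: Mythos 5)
Your argument matches the paper's own derivation exactly: the paper obtains this corollary from Corollary~\ref{coroHstardih} by precisely the two observations you make, namely that type $2^\star$ transformations are special cases of type $2$ transformations and that the empty word is the unique geodesic representative of the identity. The proposal is correct and essentially identical to the paper's proof.
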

\subsection{The case of large type Artin-Tits groups}
Our objective here is to prove the following result, which is a special case of Theorem~\ref{TH1}.  
\begin{prop}\ \label{PropHLarge2}Assume~$A(\Gamma_0)$ is an Artin group of large type with $S$ as generating set.  Let $w$ be a word over $S\cup S^{-1}$. There exists a geodesic word $w'$  such that  $\overline{w} = \overline{w'}$ and $$w \goesR{0,1,2^\star} w'.$$ 
\end{prop}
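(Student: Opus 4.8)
The plan is to reduce the large-type case to the dihedral case, which is already fully understood by Corollary~\ref{coroHstardih}. The key structural fact about large type is that every relation in $R$ involves only two generators $s,t$, and so is a \emph{dihedral} relation $[s,t,m\rangle = [t,s,m\rangle$ living in the standard parabolic subgroup $A_{\{s,t\}}$. Consequently each special transformation of type $0$, $1$, or $2^\star$ that one can apply to a word actually takes place inside a single dihedral ``sub-presentation.'' So the first thing I would do is make precise the notion of a geodesic word in the large-type group, presumably importing the geodesic normal form of \cite{HoR}, and recall (again from \cite{HoR}) that non-geodesic freely reduced words are detected by the presence of a critical factor, where critical words are now defined with respect to \emph{some} pair $\{s,t\}$ of generators with $m_{s,t}<\infty$.

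Next I would set up the induction/descent argument. Given a word $w$ over $S\cup S^{-1}$, I would first freely reduce it using type $0$ operations to obtain a freely reduced word. If this word is already geodesic, we are done with $w' = w$. Otherwise, by the large-type analogue of \cite[Lemma~2.3]{HoR}, $w$ must contain a critical factor $w_1$ with respect to some two-generator subset $\{s,t\}\subseteq S$. Applying the type $\beta$ transformation $w_1 \mapsto \tau(w_1)$ (which by Lemma~\ref{Lemtauet2}, applied \emph{within} the dihedral subgroup $A_{\{s,t\}}$, is realised by a sequence of type $1$ and type $2^\star$ operations) produces a word to which a free reduction can then be applied, strictly decreasing the length. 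Iterating, and noting that length strictly decreases at each $\beta$-plus-free-reduction step, the process terminates at a geodesic word $w'$ representing the same group element, with $w \goesR{0,1,2^\star} w'$.

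The decisive point—and the main obstacle—is verifying that Lemma~\ref{Lemtauet2} transfers from the dihedral setting to the large-type setting \emph{as applied to a factor of a longer word}. In the dihedral case, Lemma~\ref{Lemtauet2} says $w_1 \goesR{\beta} \tau(w_1) \implies w_1 \goesR{1,2^\star} \tau(w_1)$, but the type $2^\star$ operations there were defined using greatest common divisors computed \emph{relative to the entire surrounding word} $w = w_1' v^{-1}v' w_2'$ (see Definition~\ref{D:Special3}). When $w_1$ sits as a factor inside a large-type word $w$, the neighbouring letters of $w_1$ may come from generators outside $\{s,t\}$, and I must check that these external letters cannot contribute to the relevant common divisors in the dihedral monoid $A_{\{s,t\}}^+$—equivalently, that a letter $r\notin\{s,t\}$ never cancels or combines with the $\{s,t\}$-factor under a dihedral relation. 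This should follow from the fact that distinct standard generators satisfy no relation mixing them except the braid relation of their own edge, so the greatest common factor computation for the $2^\star$ operation inside $A_{\{s,t\}}$ is unaffected by adjacent non-$\{s,t\}$ letters. I would isolate this as the crucial verification, likely phrasing it as: the restriction of the type $2^\star$ condition to the factor $w_1$ coincides with the intrinsic dihedral $2^\star$ condition in $A_{\{s,t\}}$, so Lemma~\ref{Lemtauet2} applies verbatim to the factor.
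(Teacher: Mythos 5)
Your overall strategy coincides with the paper's: define critical words of the large type group as the union of the critical words of its dihedral parabolics, extend $\tau$ and the type $\beta$ moves, show each $\beta$ move factors through type $1$ and $2^\star$ moves (the paper's Lemma~\ref{Lemtauet3}), and iterate detection-plus-reduction until the word is geodesic. But there is a genuine gap in your detection step. You claim that a non-geodesic freely reduced word contains a single critical factor $w_1$ such that the one type $\beta$ move $w_1 \mapsto \tau(w_1)$ already produces a word admitting a free reduction, so that ``length strictly decreases at each $\beta$-plus-free-reduction step.'' That is the dihedral statement (\cite[Lemma~2.3]{HoR}, used in Proposition~\ref{PropHoR1}), and it does \emph{not} transfer to large type: in a large type group a single critical factor replacement generally creates no cancellation. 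What is true (\cite[Proposition~3.3]{HoR}, restated as Proposition~\ref{propHoRcrseq}(1) and as the unlabelled lemma preceding Corollary~\ref{CoroHLarge2}) is that a non-geodesic word admits a \emph{rightward critical factorization} $\alpha v_1\cdots v_n\beta$ into several overlapping critical factors, and only after applying the entire associated \emph{critical sequence} of $n$ type $\beta$ transformations -- each of which preserves length -- does a free cancellation appear. This is precisely why the paper has to develop the machinery of critical factorizations and critical sequences in Section~\ref{secCritSequ}. The gap is repairable without changing your architecture (length still drops after each full critical sequence followed by free reduction, and each $\beta$ move in the sequence still decomposes via Lemma~\ref{Lemtauet3}), but as written the inductive step rests on a false premise.

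A secondary remark: your worry about whether the $2^\star$ condition of Definition~\ref{D:Special3}, which refers to the letters of the ambient word adjacent to the replaced factor, survives the embedding of a dihedral critical word into a longer large-type word is legitimate, and the paper does not spell this out (it calls Lemma~\ref{Lemtauet3} a ``straightforward extension''). Your resolution, however, only treats adjacent letters lying outside $\{s,t\}$; the adjacent letter may well belong to $\{s,t\}^{\pm 1}$, and that case also needs an argument (using maximality of the factors in the $\star$-decomposition and the fact that the critical factor is geodesic) before one can assert that the dihedral $2^\star$ moves remain $2^\star$ moves of the big word.
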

The proof is similar to the proof of Corollary~\ref{coroHstardih};
it follows from a combination of Lemma~\ref{Lemtauet3} below (a  straightforward extension of Lemma~\ref{Lemtauet2}) and results from \cite{HoR}, as we now describe.

Assume that~$A(\Gamma_0)$ is a Artin group with $S$ as generating set.
By \cite{VdL,Par}, the subgroup generated by any subset $T$ of $S$  is  canonically isomorphic to the Artin-Tits group associated with the full
subgraph of $\Gamma_0$. These subgroups are called the \emph{parabolic}
subgroups of~$A(\Gamma_0)$.  So, if $T = \{s,t\}$  with $m_{s,t}\neq \infty$
then the parabolic subgroup generated by $T$ is a dihedral Artin group . We  can
define the family of (signed or unsigned) critical words of~$A(\Gamma_0)$ as
the (disjoint) union of the sets of  (signed or unsigned) critical words of all dihedral parabolic subgroups of~$A(\Gamma_0)$. The involution $\tau$ is still
well defined on this set, as are the associated operations of type $\beta$ on words over $S\cup S^{-1}$. So, the statement of Lemma~\ref{Lemtauet2} extends
to the framework of Artin groups of large type.

\begin{lemm}\label{Lemtauet3}  Assume that~$A(\Gamma_0)$ is an Artin group of
large type. For any two words $w, w'$, one has 
$$w \goesR{\beta} w'\implies w \goesR{1,2^\star} w'.$$   
\end{lemm}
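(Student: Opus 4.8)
The plan is to reduce a single type~$\beta$ transformation to the dihedral case already settled in Lemma~\ref{Lemtauet2}, using that every critical word lives inside one dihedral parabolic subgroup. Since both $\goesR{\beta}$ and $\goesR{1,2^\star}$ are the reflexive--transitive closures of their one-step relations, it is enough to treat a single $\beta$-step, so I write $w = w_1 w_0 w_2$ and $w' = w_1 \tau(w_0) w_2$, where $w_0$ is a critical word of the dihedral parabolic subgroup $A(\{s,t\})$ associated with some edge $\{s,t\}$ with $m_{s,t}\neq\infty$. By \cite{VdL,Par} this subgroup embeds canonically in $A(\Gamma_0)$; its defining relation $[s,t,m_{s,t}\rangle = [t,s,m_{s,t}\rangle$ belongs to $R$, and the words $w_0,\tau(w_0)$ together with the involution $\tau$ are literally those of the dihedral group. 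In particular the dihedral Lemma~\ref{Lemtauet2} applies to $w_0$ viewed as a word over $\{s,t\}^{\pm1}$ and yields a finite sequence of type~$1$ and type~$2^\star$ transformations, internal to $A(\{s,t\})$, carrying $w_0$ to $\tau(w_0)$: a single type~$1$ step when $w_0$ is signed (Lemma~\ref{Lemtauet1}) and a sequence of type~$2^\star$ steps in $\Omega(\{s,t\})$ when $w_0$ is unsigned (Lemma~\ref{Lemtauet2star}).

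The heart of the argument is then to replay this dihedral sequence on the factor of $w$ sitting in the position of $w_0$, and to verify that each transformation keeps its type once the empty context of the standalone dihedral computation is surrounded by $w_1$ and $w_2$. To isolate the only delicate point, I let $\hat w_1$ be the maximal suffix of $w_1$ lying in $\{s,t\}^{\pm1}$ and $\hat w_2$ the maximal prefix of $w_2$ lying in $\{s,t\}^{\pm1}$, so that $\hat w_1 w_0 \hat w_2$ is again a word over $\{s,t\}^{\pm1}$ and $\hat w_1 w_0 \hat w_2 \goesR{\beta} \hat w_1 \tau(w_0)\hat w_2$ is a dihedral $\beta$-step. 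Lemma~\ref{Lemtauet2} gives $\hat w_1 w_0 \hat w_2 \goesR{1,2^\star}\hat w_1 \tau(w_0)\hat w_2$, and I claim each of its transformations remains valid inside $w$. For type~$1$ steps this is immediate, since Definition~\ref{D:Special} imposes no condition on the surrounding word and the relation used lies in $R$. For type~$2^\star$ steps I will use that the conditions of Definition~\ref{D:Special3} are \emph{local}: the requirement there that a certain greatest common factor equal $v$ (respectively $v'$) can be violated only by the single context letter immediately adjacent to the transformed factor, namely when that letter extends the relevant alternating $\Delta$-prefix. Hence any step whose transformed factor abuts the outer ends of $\hat w_1 w_0 \hat w_2$ stays of type~$2^\star$ in $w$, because the letter of $w$ found there is either absent or a generator outside $\{s,t\}$, and the letters occurring in $[s,t,m_{s,t}\rangle = [t,s,m_{s,t}\rangle$ all lie in $\{s,t\}$ and so can neither equal nor invert such a letter; steps internal to $\hat w_1 w_0 \hat w_2$ have unchanged adjacent letters and retain their type directly from the dihedral reduction. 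Concatenating, $w = w_1 w_0 w_2 \goesR{1,2^\star} w_1 \tau(w_0) w_2 = w'$, and closing under composition gives the statement for arbitrary $w,w'$.

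The step I expect to be the main obstacle is exactly this locality analysis for the unsigned case: I must check carefully that the failure of a type~$2^\star$ condition is detected by a single adjacent letter (so that foreign context letters can never trigger it), and, in the remaining situation where the adjacent letter lies in $\{s,t\}^{\pm1}$ and would lengthen an alternating run, that the correct remedy is to absorb that letter into the transformed factor, exactly as in the worked Example following Definition~\ref{D:Special3}. Making this precise --- tracking, across the whole $2^\star$-sequence produced by Lemma~\ref{Lemtauet2star}, which transformed factors reach the boundary of the dihedral subword, and confirming that this absorption keeps every step within type~$2^\star$ --- is the one piece of bookkeeping that goes beyond a purely formal invocation of the dihedral result, and is where I will spend the bulk of the proof.
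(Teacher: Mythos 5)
Your proposal is correct and takes essentially the same approach as the paper: the paper obtains this lemma purely as an extension of the dihedral Lemma~\ref{Lemtauet2}, observing that critical words, the involution $\tau$, and the type $\beta$ operations are by definition objects of the dihedral parabolic subgroups, and then simply asserts that ``the statement of Lemma~\ref{Lemtauet2} extends'' to the large type setting, with no further detail. Your boundary-letter analysis --- absorbing the maximal dihedral suffix/prefix of the context into the subword so that Lemma~\ref{Lemtauet2} applies, and then checking that an adjacent letter $x^{\pm 1}$ with $x \notin \{s,t\}$ can never violate the greatest-common-factor conditions of Definition~\ref{D:Special3} because the relation sides are positive words over $\{s,t\}$ --- is precisely the verification the paper leaves implicit, so it fills in, rather than diverges from, the paper's argument.
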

 
In the next section, we shall need the notion of rightward
critical sequences from \cite{HoR}. But we postpone the explicit definition of
these to Section~\ref{secCritSequ}. For now, all
we need is the following associated result, derived from \cite[Proposition 3.3]{HoR} 
\begin{lemm}
Assume that ~$A(\Gamma_0)$ is a Artin group of large type
with $S$ as generating set.  Let $w$ be a word over $S\cup S^{-1}$.  If the
word $w$ is not geodesic then it can be transformed into a word that is not
freely reduced by a sequence of special transformations of type $\beta$. 
\end{lemm}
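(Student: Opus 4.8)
The plan is to reduce the statement to the rightward critical sequence result of \cite{HoR}, in direct analogy with the dihedral case treated in Proposition~\ref{PropHoR1}. First observe that if $w$ is already not freely reduced there is nothing to prove, since the empty sequence of type~$\beta$ moves leaves $w$ unchanged and $w$ is then itself not freely reduced; so I may assume that $w$ is freely reduced but not geodesic. In the dihedral setting such a $w$ always contains a single critical factor $w_1$, and the single type~$\beta$ move $w_1 \mapsto \tau(w_1)$ already exposes a cancellation (that was the content of Proposition~\ref{PropHoR1}). In the large-type case the role of that single move is played by an entire sequence of type~$\beta$ moves.

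The key notion is that of a \emph{rightward critical sequence}, whose precise definition is postponed to Section~\ref{secCritSequ}. Informally, such a sequence selects an initial critical factor of $w$ and then records how the critical configuration migrates to the right each time the involution $\tau$ is applied to the current critical factor. Since every critical factor lives in one of the dihedral parabolic subgroups of $A(\Gamma_0)$, and $\tau$ has been extended to the union of the sets of critical words of all such subgroups, each individual replacement $w_i \mapsto \tau(w_i)$ is, by Definition~\ref{D:Specialbeta}, exactly a type~$\beta$ transformation. The result I would import is \cite[Proposition 3.3]{HoR}: a freely reduced non-geodesic word admits a rightward critical sequence whose terminal word is no longer freely reduced.

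Granting that, the proof reduces to bookkeeping. Reading off the successive factor replacements $w_i \mapsto \tau(w_i)$ as type~$\beta$ moves, the rightward critical sequence furnished by \cite[Proposition 3.3]{HoR} transforms $w$ into a word that is not freely reduced, which is precisely the assertion of the lemma.

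The main obstacle is concentrated entirely in \cite[Proposition 3.3]{HoR}: one must show that the rightward propagation of the critical configuration \emph{terminates}, and terminates by forcing two mutually inverse generators to become adjacent, rather than running indefinitely or stalling with no cancellation ever appearing. This is where the large-type hypothesis on the labels is genuinely used, through the geodesic combinatorics of \cite{MaM,HoR}: the argument must control the interaction of the successive critical factors across the possibly distinct dihedral parabolic subgroups in which they live, and must exhibit a complexity measure — built from the geodesic-length data $p(w)$ and $n(w)$ — that strictly controls the sequence and guarantees it ends in a cancellation. Once Section~\ref{secCritSequ} makes the definition of a rightward critical sequence precise and this termination statement is available, the present lemma follows immediately by the translation described above.
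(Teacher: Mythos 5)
Your proposal is correct and follows essentially the same route as the paper: the paper states this lemma as a direct consequence of \cite[Proposition 3.3]{HoR} (made explicit later as Proposition~\ref{propHoRcrseq}(1)), observing exactly as you do that each step of a rightward critical sequence is a replacement of a critical factor by its $\tau$-image, hence a type~$\beta$ transformation, and that the terminal word fails to be freely reduced. The termination/cancellation content you correctly isolate as the real difficulty is likewise imported wholesale from \cite{HoR} in the paper, so no gap is introduced by treating it as a black box here.
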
   
With Lemma~\ref{Lemtauet3} at hand, Corollary~\ref{CoroHLarge2} follows immediately.
\begin{coro}\ \label{CoroHLarge2}Assume that~$A(\Gamma_0)$ is a Artin group of
large type with $S$ as generating set.  Let $w$ be a word over $S\cup S^{-1}$.  Then, $$w \goesR{0,1,2^\star} \varepsilon \iff \overline{w} = 1$$ 
\end{coro}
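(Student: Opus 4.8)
The plan is to prove the two implications separately, disposing of the easy one first. For the direction $\overline{w}=1 \Leftarrow w \goesR{0,1,2^\star} \varepsilon$, I would simply record that each special transformation of type $0$, $1$, or $2^\star$ preserves the element of $A(\Gamma_0)$ represented by a word. For types $0$ and $1$ this is immediate from free reduction and the defining relations of the presentation, and type $2^\star$ is a restricted instance of type $2$, whose rules are derived directly from the braid relations and hence are valid identities in the group. Consequently, if $w \goesR{0,1,2^\star} \varepsilon$ then $\overline{w}=\overline{\varepsilon}=1$.

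For the substantive direction, suppose $\overline{w}=1$. The key input is Proposition~\ref{PropHLarge2}, which furnishes a geodesic word $w'$ with $\overline{w'}=\overline{w}$ and $w \goesR{0,1,2^\star} w'$. Since $\overline{w'}=\overline{w}=1$, the word $w'$ is a geodesic representative of the identity element. But the empty word $\varepsilon$ is the unique geodesic representative of $1$: any representative of $1$ has nonnegative length, the empty word attains length $0$, and it is the only word of that length. Hence $w'=\varepsilon$, and the reduction $w \goesR{0,1,2^\star} w'$ becomes $w \goesR{0,1,2^\star} \varepsilon$, as required. This completes the equivalence.

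The genuine content therefore sits entirely in Proposition~\ref{PropHLarge2} rather than in the corollary itself, and the corollary is a formal consequence once that proposition and the triviality of the geodesic representative of $1$ are in hand. The chain behind Proposition~\ref{PropHLarge2} is the real obstacle: the imported result from \cite{HoR} shows that a freely reduced non-geodesic word must admit a critical factor whose $\tau$-image yields a word that is no longer freely reduced, so that a single type $\beta$ transformation followed by free reduction strictly shortens the word; iterating this produces a geodesic via operations of types $0$ and $\beta$. The final ingredient, and the delicate one, is Lemma~\ref{Lemtauet3}, which permits each type $\beta$ transformation to be rewritten as a sequence of type $1$ and type $2^\star$ transformations. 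I expect the main difficulty to lie in verifying that the dihedral analysis underlying Lemma~\ref{Lemtauet2} transfers uniformly across all dihedral parabolic subgroups of the large-type group, so that the involution $\tau$ and its decomposition into $1$ and $2^\star$ steps remain well defined on the full set of critical words over $S\cup S^{-1}$.
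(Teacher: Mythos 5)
Your proposal is correct and follows essentially the same route as the paper: the forward implication is the trivial observation that operations of types $0$, $1$, $2^\star$ preserve the represented group element, and the reverse implication is exactly the paper's deduction from Proposition~\ref{PropHLarge2} together with the fact that the empty word is the unique geodesic representative of $1$. Your closing remarks on the chain behind Proposition~\ref{PropHLarge2} (the criticality result imported from \cite{HoR} plus Lemma~\ref{Lemtauet3}) also match the paper's stated structure.
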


Thus Property~$H$ is verified by Artin-Tits groups of large type.
In fact we can prove a stronger property than $H$, property 
$H\!^{\textit{+}}$ below,
using the
convexity of parabolic subgroups of Artin groups (proved
for large type Artin-Tits groups in \cite{HoR}, but for all Artin groups
in \cite{ChP}):
\begin{prop}\cite{ChP}  \label{prop:ChP}
Assume that~$A(\Gamma_0)$ is an Artin-Tits group with $S$ as generating set.  
Let $S_0$ be included in~$S$. If $g$ belongs to the standard parabolic 
subgroup $A_{S_0}$ then all its geodesic representative words over $S\cup S^{-1}$ are actually over $S_0\cup S_0^{-1}$.   
\end{prop}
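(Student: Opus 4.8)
The plan is to prove the statement by induction on the number of generators, first peeling off generators one at a time and then exploiting the visual (amalgamated) splittings of Artin--Tits groups. The key elementary observation, which drives the reduction, is this: if $S_0\subseteq S_1\subseteq S$, if $A_{S_1}$ has the convexity property inside $A_S$, and if $A_{S_0}$ has it inside $A_{S_1}$, then $A_{S_0}$ has it inside $A_S$. Indeed, given $g\in A_{S_0}\subseteq A_{S_1}$ and a geodesic representative $w$ of $g$ over $S\cup S^{-1}$, convexity of $A_{S_1}$ forces $w$ to be written over $S_1\cup S_1^{-1}$; since $w$ realises the $S$-geodesic length of $g$ and already uses only letters of $S_1$, it also realises the $S_1$-geodesic length, so $w$ is geodesic over $S_1\cup S_1^{-1}$, and convexity of $A_{S_0}$ in $A_{S_1}$ then forces $w$ to be written over $S_0\cup S_0^{-1}$. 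Iterating with $S_1=S\setminus\{a\}$ for successive $a\in S\setminus S_0$, the problem reduces to the single-generator case $S=S_0\cup\{a\}$ with $a\notin S_0$.

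For this base case I would use Bass--Serre theory. Set $T=\{t\in S_0 : m_{a,t}\neq\infty\}$, the generators joined to $a$ by a finite-labelled edge, so that the presentation contains no relation between $a$ and any element of $S_0\setminus T$. By \cite{VdL,Par} this yields the visual decomposition $A_S = A_{S_0}*_{A_T}A_{T\cup\{a\}}$ (and $a\notin A_{S_0}$). Consider its Bass--Serre tree, with base vertex $v_1$ stabilised by $A_{S_0}$, neighbour $v_2$ stabilised by $A_{T\cup\{a\}}$, and edge stabiliser $A_T$. Given $g\in A_{S_0}$, choose among all $S$-geodesic representatives of $g$ one word $w$ with the fewest occurrences of $a^{\pm1}$, and write $w=\alpha_0 a^{\epsilon_1}\alpha_1\cdots a^{\epsilon_k}\alpha_k$ with each $\alpha_i$ over $S_0\cup S_0^{-1}$. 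Tracking the images of $v_1$ under the prefixes of $w$ traces a closed loop at $v_1$: the letters of each $\alpha_i$ fix the current vertex, while each $a^{\epsilon_i}$ crosses one edge-pair. Since a nontrivial closed path in a tree must backtrack, the loop returns around some translate of $v_2$; using the standard intersection property of parabolic subgroups ($A_{S_0}\cap A_{T\cup\{a\}}=A_T$, together with its conjugated forms) one extracts a factor of the form $a^{\epsilon_i}\alpha_i a^{\epsilon_{i+1}}$ whose value, and whose inner word, lie in a (conjugate of) $A_T$. Convexity of $A_T$ inside $A_{S_0}$ and inside $A_{T\cup\{a\}}$, both instances of the proposition for strictly fewer generators and hence available by induction, then let one rewrite this factor as an equal word of no greater length containing strictly fewer $a^{\pm1}$, contradicting the minimal choice of $w$. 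So $k=0$ and $w$ is over $S_0\cup S_0^{-1}$.

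The main obstacle is the residual configuration $T=S_0$, i.e.\ when $a$ is joined by a finite-labelled edge to every generator of $S_0$: here $A_{T\cup\{a\}}=A_S$, the visual splitting is trivial, and the inductive Bass--Serre scheme collapses. Equivalently, once $\Gamma_0$ is connected with no $\infty$-labelled edge there is simply no splitting over a proper parabolic on which to run the tree argument, so the induction provides nothing. This irreducible case is the genuine heart of the matter: one must argue directly that an occurrence of $a^{\pm1}$ in a word cannot be exploited, through the braid relations, to shorten a representative of an element of $A_{S_0}$. I expect this to be the hardest step, and it is precisely where the deeper Garside-theoretic and geometric techniques of \cite{ChP} (controlling geodesics via the spherical-type parabolic structure) are required; any self-contained completion of the present plan would have to supply such an argument for this case.
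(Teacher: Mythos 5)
This proposition is not proved in the paper at all: it is imported wholesale from \cite{ChP}, and the authors use it as a black box. So the only question is whether your blind argument actually establishes the statement, and it does not. Your two reduction steps are sound in spirit --- transitivity of convexity along a chain $S_0\subseteq S_1\subseteq S$ is correct, and peeling off a generator $a$ joined by an $\infty$-label to part of $S_0$ via the visual amalgam $A_{S_0}*_{A_T}A_{T\cup\{a\}}$ is a legitimate use of Bass--Serre theory together with the known intersection property of standard parabolics. But, as you yourself concede in the last paragraph, the entire content of the Charney--Paris theorem lives in the irreducible case where $\Gamma_0$ is connected with no $\infty$-labelled edge: there the visual splitting is trivial, the induction returns nothing, and you supply no argument. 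A reduction to the hard case, followed by an acknowledgement that the hard case requires the techniques of the very paper being cited, is not a proof.

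There is also a secondary gap in the amalgam step itself. Britton's lemma gives you an index $i$ with $\overline{\alpha_i}\in A_T$, and induction lets you rewrite $\alpha_i$ over $T\cup T^{-1}$ without increasing length; but the resulting factor $a^{\epsilon_i}\alpha_i' a^{\epsilon_{i+1}}$ merely represents an element of $A_{T\cup\{a\}}$. To contradict minimality of the number of occurrences of $a^{\pm1}$ you must produce a representative of that element, of no greater length, with strictly fewer $a^{\pm1}$ --- and that is exactly a geodesic/convexity statement about $A_{T\cup\{a\}}$ (in which $T\cup\{a\}$ may be all of $S$), so the argument is circular unless $T\subsetneq S_0$, and even then it needs justification rather than assertion. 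If you want a self-contained treatment you would need to reproduce the actual argument of \cite{ChP}, which does not proceed by this visual-splitting induction.
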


\begin{defi}
Assume that $(S, R)$ is an Artin-Tits presentation.  We say that $(S, R)$ satisfies 
\emph{Property~$H\!^{\textit{+}}$} if for all~$S_0 \subseteq S$ and any word $w$ over $S\cup S^{-1}$
 that represents an element of the standard parabolic subgroup $A_{S_0}$,
there exists a word $w_0$ over $S_0\cup S_0^{-1}$ so that $$w \goesR{0,1,2^\star} w_0$$ 
\end{defi}
\begin{coro}\ \label{CoroHplusLarge} Every Artin-Tits presentation of large type satisfies Property~$H\!^{\textit{+}}$.
\end{coro}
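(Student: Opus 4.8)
The goal is to strengthen Corollary~\ref{CoroHLarge2} to Property~$H\!^{\textit{+}}$, namely that whenever a word $w$ over $S\cup S^{-1}$ represents an element $g$ of a standard parabolic subgroup $A_{S_0}$, it can be driven by operations of types $0,1,2^\star$ to a word $w_0$ written entirely over $S_0\cup S_0^{-1}$. The natural approach is to first reduce $w$ to a geodesic and then invoke convexity (Proposition~\ref{prop:ChP}). Concretely, I would proceed as follows.

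\emph{Step 1: reduce to a geodesic.}
Apply Proposition~\ref{PropHLarge2} to the word $w$. This yields a geodesic word $w'$ with $\overline{w}=\overline{w'}$ and $w\goesR{0,1,2^\star} w'$. Since $\overline{w'}=g$ lies in the parabolic subgroup $A_{S_0}$, the word $w'$ is a geodesic representative of an element of $A_{S_0}$.

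\emph{Step 2: invoke convexity.}
By Proposition~\ref{prop:ChP}, every geodesic representative over $S\cup S^{-1}$ of an element of $A_{S_0}$ is in fact a word over $S_0\cup S_0^{-1}$. Hence $w'$ is already a word over $S_0\cup S_0^{-1}$, and setting $w_0=w'$ we obtain the required
$$w \goesR{0,1,2^\star} w_0$$
with $w_0$ over $S_0\cup S_0^{-1}$. This is exactly Property~$H\!^{\textit{+}}$.

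\emph{Where the work really lies.}
The only subtlety is that Property~$H\!^{\textit{+}}$ must hold \emph{uniformly} for all $S_0\subseteq S$ simultaneously, but this is automatic: the two inputs, Proposition~\ref{PropHLarge2} and Proposition~\ref{prop:ChP}, are each stated for arbitrary subsets and make no reference to a fixed $S_0$, so the argument above applies verbatim for each $S_0$. Thus the substantive content has already been discharged in the two cited results; the corollary is a one-line combination of \enquote{reduce to geodesic, then apply convexity.} I expect no genuine obstacle here, since the hard analytic work (that type-$\beta$ transformations decompose into type $1,2^\star$ moves, giving Lemma~\ref{Lemtauet3} and hence Proposition~\ref{PropHLarge2}) and the geometric input (convexity of parabolics, Proposition~\ref{prop:ChP}) are both available. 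The proof is therefore essentially a formal deduction, and the corollary follows immediately.
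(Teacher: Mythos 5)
Your proof is correct and is essentially identical to the paper's own argument: reduce $w$ to a geodesic $w_0$ via Proposition~\ref{PropHLarge2}, then apply the convexity of parabolic subgroups (Proposition~\ref{prop:ChP}) to conclude that $w_0$ is written over $S_0\cup S_0^{-1}$. No further comment is needed.
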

\begin{proof}  By  Proposition~\ref{PropHLarge2} there exists a geodesic word $w_0$ so that  $w \goesR{0,1,2^\star} w_0$. But $w_0$ is a geodesic word and $\overline{w_0}$ belongs to the standard~$A_{S_0}$. So $w_0$ is over $S_0\cup S_0^{-1}$. 
\end{proof}

We note that we derive 
Property~$H\!^{\textit{+}}$ not simply from Property $H$ plus convexity but
rather from convexity together with the technical results from which Property
$H$ is derived.
Hence it does not follow simply from convexity that Artin-Tits presentations
of spherical type or of FC type satisfy 
Property~$H\!^{\textit{+}}$ as well as $H$. But nonetheless the property must 
hold for both (and also for sufficiently large Artin-Tits groups), since it is implied by the Property $H^\#$ that is introduced
in  Section~\ref{sec:sufflarge};
see the comment following Proposition~\ref{propHDpAmal}.

\section{Sufficiently large Artin-Tits groups}
\label{sec:sufflarge}
Our objective is now to prove Property $H$ for the family of sufficiently large Artin-Tits groups, which we define in this section. Our strategy is similar
to the one used in \cite{DeG}  to prove property $H$ for Artin-Tits groups
of FC type. Namely we prove that a stronger property~$H^\#$ holds for large
type Artin groups and then extend it to sufficiently large Artin-Tits groups using amalgamation. 
\subsection{A characterization of Sufficiently large Artin-Tits groups}
\label{sec:char_sufflarge}
Sufficiently large Artin-Tits groups were introduced in \cite{HoR2}, with the
definition expressed in
terms of the Coxeter graph. Here we obtain a characterization that proves
that these groups are built out of large type Artin-Tits groups and finite 
rank free abelian groups.  We first recall the original definition of sufficiently large Artin-Tits groups. 
\begin{defi}\label{def:sufflarge} \cite{HoR2} Assume that~$A(\Gamma_0)$ is an Artin group with
Coxeter graph $\Gamma$. We say that ~$A(\Gamma_0)$ is sufficiently large if the full subgraph on any triple of distinct
 vertices of  $\Gamma$ is either completely disconnected, or of large type, or contains an edge with a label $\infty$. In other words, if $s,t,u$ are three
distinct vertices with $m_{s,t} = 2$ and  $m_{s,u} \neq 2,\infty$, then  $m_{t,u} = \infty$.  \end{defi}

From this definition we deduce that 
\begin{prop} \label{propcarSLAT}The family of sufficiently large Artin-Tits groups is the smallest family of Artin-Tits groups closed under amalgamation over a standard parabolic subgroup, and which contains finitely generated free Abelian Artin-Tits groups and large type Artin groups.   
\end{prop}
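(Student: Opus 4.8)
The plan is to prove the two inclusions separately. Write $\mathcal{SL}$ for the family of sufficiently large Artin-Tits groups, and $\mathcal{F}$ for the smallest family that contains all finitely generated free abelian and all large type Artin groups and is closed under amalgamation over standard parabolic subgroups. Throughout I would use the basic fact (from \cite{VdL,Par}) that if $S=S_1\cup S_2$, $S_0=S_1\cap S_2$, and every edge of $\Gamma_0$ joining a vertex of $S_1\setminus S_0$ to a vertex of $S_2\setminus S_0$ carries the label $\infty$, then $A_S = A_{S_1}*_{A_{S_0}}A_{S_2}$; equivalently, the amalgamation of two standard parabolics over a common standard parabolic is exactly the Artin group obtained by gluing the two Coxeter graphs along $S_0$ with $\infty$-labelled cross-edges. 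I would also use that sufficient largeness is inherited by full subgraphs, which is immediate from Definition~\ref{def:sufflarge} since the triples of a full subgraph form a subset of the triples of $\Gamma$ carrying the same labels.

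For the inclusion $\mathcal{F}\subseteq\mathcal{SL}$, I would first check the base classes: a graph all of whose labels equal $2$ has every triple completely disconnected, and a large type graph has every triple of large type or carrying an $\infty$-edge, so both classes lie in $\mathcal{SL}$. Then I would verify that $\mathcal{SL}$ is closed under amalgamation. Given sufficiently large $A(\Gamma_1),A(\Gamma_2)$ amalgamated over $A_{S_0}$, the result is $A(\Gamma)$ on $S=S_1\cup S_2$ with $\infty$-labels across $S_1\setminus S_0$ and $S_2\setminus S_0$, and I check sufficient largeness of $\Gamma$ by running through its triples. A triple contained in $S_1$ (or in $S_2$) inherits its labels from $\Gamma_1$ (resp. $\Gamma_2$) and satisfies the condition by hypothesis; any remaining triple is contained in neither $S_1$ nor $S_2$, hence meets both $S_1\setminus S_0$ and $S_2\setminus S_0$, so it contains a cross-edge, which is $\infty$-labelled, and the condition holds. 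Since $\mathcal{SL}$ contains the base classes and is closed under the operation, minimality of $\mathcal{F}$ yields $\mathcal{F}\subseteq\mathcal{SL}$.

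For the reverse inclusion $\mathcal{SL}\subseteq\mathcal{F}$ I would argue by induction on $|S|$. When $|S|\le 2$ the group is directly free abelian (single label $2$) or large type (label $\ge 3$ or $\infty$, including the free group $F_2$), hence a base element of $\mathcal{F}$. For $|S|\ge 3$ there are two cases. If $\Gamma_0$ has an edge labelled $\infty$, say between $a$ and $b$, then with $S_0=S\setminus\{a,b\}$, $S_1=S\setminus\{b\}$, $S_2=S\setminus\{a\}$ the only edge across $S_1\setminus S_0=\{a\}$ and $S_2\setminus S_0=\{b\}$ is the $\infty$-edge $ab$, so $A_S=A_{S_1}*_{A_{S_0}}A_{S_2}$ is an amalgamation of two sufficiently large groups on strictly fewer vertices; these lie in $\mathcal{F}$ by induction, whence $A_S\in\mathcal{F}$.

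The remaining, genuinely delicate, case is when $\Gamma_0$ has no $\infty$-edge at all, where I must show $A(\Gamma_0)$ is itself free abelian or large type. Since all labels are finite, every pair of vertices is joined by a finite edge, and I claim each vertex $s$ has either all its incident labels equal to $2$ or all of them $\ge 3$: if $m_{s,t}=2$ and $m_{s,u}\ge 3$ for distinct $t,u$, then sufficient largeness forces $m_{t,u}=\infty$, contradicting the absence of $\infty$-edges. This partitions $S$ into the set $V_2$ of vertices with only $2$-labelled incident edges and the set $V_{\ge 3}$ of vertices with only $\ge 3$-labelled incident edges; were both non-empty, an edge between a vertex of $V_2$ and one of $V_{\ge 3}$ would be simultaneously labelled $2$ and $\ge 3$, which is absurd, so one set is empty and $\Gamma_0$ is all-$2$ (free abelian) or all-$\ge 3$ (large type). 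This settles the induction and completes the proof. I expect this final step to be the principal obstacle: it is exactly here that one must exploit the full strength of the triple condition to rule out graphs in which commuting and large-type relations are interwoven with no $\infty$-edge available to split along.
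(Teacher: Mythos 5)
Your proof is correct and follows essentially the same route as the paper: split off an $\infty$-labelled edge as an amalgam $A_{S\setminus\{b\}}*_{A_{S\setminus\{a,b\}}}A_{S\setminus\{a\}}$ and induct, show that a sufficiently large graph with no $\infty$-edge is all-$2$ or all-$\geq 3$, and for the converse observe that gluing two sufficiently large graphs with $\infty$-labelled cross-edges preserves the triple condition. The only differences are cosmetic (induction on $|S|$ rather than on the number of $\infty$-edges), and your vertex-partition argument for the no-$\infty$-edge case spells out a step the paper merely asserts.
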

\begin{proof} First, it is immediate from the definition that every standard parabolic subgroup of a sufficiently large Artin-Tits group is also sufficiently large.

Now,  assume that~$A_S$ is a  sufficiently large Artin-Tits group, and
denote by $S$ its generating set. If there is no $\infty$-labelled edge, then  either $m_{s,t} = 2$ for all pairs $s,t$ in $S$, or there is no pair $s,t$  in $S$ so that $m_{s,t} =2$.
So~$A_S$  is either free abelian or of large type. Assume there exist $s_1,s_2$  in $S$  so that $m_{s_1,s_2} =\infty$. Then, it is well-known
that~$A_S$  is canonically isomorphic to the amalgamated product $A_1*_{A_{1,2}}A_2$ where $A_1$, $A_2$ and $A_{1,2}$ are the standard parabolic subgroups of $A_S$ generated by $S\setminus\{s_1\}$, $S\setminus\{s_2\}$ and $S\setminus\{s_1,s_2\}$.
This proves by induction on the number of  $\infty$-labelled edges that sufficiently large Artin-Tits groups belong to  the smallest family of Artin-Tits groups closed under amalgamation over a standard parabolic subgroup and which
contains finitely generated free Abelian groups and Artin-Tits groups of large type.
Conversely, assume that $A_1$, $A_2$ are two Artin-Tits groups ,
generated by $S_1$ and $S_2$ respectively ,with a common standard parabolic subgroup $A_{1,2}$ generated by $S_{1,2} = S_1\cap S_2$. 
Denote by $\Gamma_1$ and $\Gamma_2$ the Coxeter graphs of $A_1$, $A_2$, respectively. Then the group $A_1*_{A_{1,2}}A_2$ is the Artin-Tits group whose graph
is obtained by glueing $\Gamma_1$ and $\Gamma_2$ along the  common subgraph
generated by  $S_{1,2}$, and adding an $\infty$-labelled edge between each pair
$s,t$ of vertices so that $s$ lies in $S_1\setminus S_{1,2}$ and $t$ lies in $S_2\setminus S_{1,2}$.
Therefore, if $A_1$, $A_2$  are sufficiently large Artin-Tits groups, so is $A_1*_{A_{1,2}}A_2$.    \end{proof}

\subsection{The property~$H\!^\textit{+}$}
\label{sec:Hplus}
From the previous section, sufficiently large Artin-Tits groups are built out
of large type Artin-Tits groups and free abelian groups through iterated sequences
of amalgamations.
So it would be natural to try and prove that  sufficiently large Artin-Tits groups satisfy
Property $H$ via a proof that when two group presentations satisfy Property $H$ and can be amalgamated in a proper way then the obtained group
presentation also satisfies Property $H$; unfortunately, there is no 
clear reason why this should be true.
However, since
Property ~$H$ is implied by
Property~$H\!^\textit{+}$ (by considering $S' = \emptyset$), 
a straightforward argument 
proves that the amalgamated product over a standard parabolic subgroup
of two Artin groups satisfying   
Property~$H\!^\textit{+}$ 
satisfies property $H$, as follows: 
 
\begin{prop}\label{propHplus_amalg} Let $(S,R)$ be an Artin-Tits presentation. 
Assume $S = S_1\cup S_2$. Set $S_{1,2} = S_1\cap S_2$. Denote by $A_1$, $A_2$ and $A_{1,2}$ the parabolic subgroups of $A_S$ generated by $S_1$, $S_2$ and
$S_{1,2}$, respectively and by $(S_1,R_1)$, $(S_2,R_2)$ and $(S_{1,2},R_{1,2})$ their Artin-Tits presentations. 
Assume
that $A_S = A_{S_1}*_{A_{S_{1,2}}}A_{S_2}$ and  $(S_1,R_1)$, $(S_2,R_2)$ and $(S_{1,2},R_{1,2})$ 
satisfy Property~$H\!^\textit{+}$. Then the presentation~$(S,R)$ satisfies Property $H$. 
\end{prop}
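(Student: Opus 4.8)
The plan is to induct on the \emph{syllable length} of $w$ relative to the amalgamated decomposition $A_S = A_{S_1}*_{A_{S_{1,2}}}A_{S_2}$, using the normal form theorem for amalgamated products. First I would fix the combinatorial set-up. Since $S=S_1\cup S_2$, every letter of $w$ lies over $S_1\cup S_1^{-1}$ or over $S_2\cup S_2^{-1}$ (letters over $S_{1,2}\cup S_{1,2}^{-1}$ lying over both), so $w$ admits decompositions $w=w_1\cdots w_n$ into non-empty syllables, each over $S_1\cup S_1^{-1}$ or over $S_2\cup S_2^{-1}$. I would let $\sigma(w)$ be the least such $n$ and fix a decomposition achieving it. A short argument shows that when $\sigma(w)=n\geq 2$ no syllable of a minimal decomposition can be over $S_{1,2}\cup S_{1,2}^{-1}$ alone (otherwise, as $S_{1,2}\subseteq S_1\cap S_2$, it could be absorbed into a neighbour, contradicting minimality); hence each syllable has a well-defined \emph{type} in $\{1,2\}$, and consecutive syllables have opposite type (else they would merge).

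The induction then runs as follows. If $\sigma(w)\leq 1$, then either $w=\varepsilon$, or $w$ is a word over a single $S_i\cup S_i^{-1}$ representing $1$ in the embedded factor $A_i$; applying Property~$H^+$ for $(S_i,R_i)$ with $S_0=\emptyset$ (equivalently, Property~$H$ for that factor) gives $w\goesR{0,1,2^\star}\varepsilon$, and since $R_i\subseteq R$ these are also $0,1,2$-transformations over $(S,R)$. If $\sigma(w)=n\geq 2$, then $(\overline{w_1},\ldots,\overline{w_n})$ is a sequence of alternating type with product $\overline{w}=1$; by the normal form theorem for $A_{S_1}*_{A_{S_{1,2}}}A_{S_2}$ (a reduced syllable sequence of length $\geq 2$ represents a non-trivial element) this sequence cannot be reduced, so some $\overline{w_j}$ lies in the amalgamated subgroup $A_{1,2}$. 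With $w_j$ over $S_i\cup S_i^{-1}$ and $S_{1,2}\subseteq S_i$, Property~$H^+$ for $(S_i,R_i)$ applied with $S_0=S_{1,2}$ produces a word $w_j'$ over $S_{1,2}\cup S_{1,2}^{-1}$ with $w_j\goesR{0,1,2^\star}w_j'$; performing these transformations in place on the factor $w_j$ gives $w\goesR{0,1,2^\star}w':=w_1\cdots w_{j-1}w_j'w_{j+1}\cdots w_n$ with $\overline{w'}=\overline{w}=1$. Because $w_j'$ is over $S_{1,2}$ it merges with its neighbours (which share the type opposite to $i$), so $\sigma(w')<\sigma(w)$, and the inductive hypothesis applies. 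Composing all these reductions proves $w\goesR{0,1,2}\varepsilon$, which with the trivial converse gives Property~$H$ for $(S,R)$.

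The crucial design choice, and the point I expect to be the main obstacle, is that the induction must be on \emph{syllable length} rather than word length: type-$2$ (and $2^\star$) transformations may increase length, and what strictly decreases at each step is the number of amalgamated syllables. It is equally essential to use the full strength of Property~$H^+$ rather than merely Property~$H$, since the inductive step must \emph{push} a syllable representing an element of $A_{1,2}$ into a word over $S_{1,2}\cup S_{1,2}^{-1}$ so that it can be amalgamated with its neighbours, and this ``reduction into a standard parabolic'' is exactly what $H^+$ with $S_0=S_{1,2}$ supplies. The remaining work is bookkeeping: checking that a minimal decomposition has strictly alternating types and no purely-$S_{1,2}$ syllable, treating the boundary cases $j=1$ and $j=n$ (where the reducible syllable merges with a single neighbour, still lowering $\sigma$), and noting that compatibility of the parabolic embeddings $A_{1,2}\hookrightarrow A_i\hookrightarrow A_S$ makes ``$\overline{w_j}\in A_{1,2}$ in $A_S$'' equivalent to ``$\overline{w_j}\in A_{1,2}$ in $A_i$'', so that $H^+$ for the factor is genuinely applicable.
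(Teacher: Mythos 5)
Your proposal is correct and follows essentially the same route as the paper's proof: induct on the minimal number of syllables, use the amalgam normal form theorem to locate a syllable $w_j$ with $\overline{w_j}\in A_{1,2}$, apply Property~$H^{+}$ with $S_0=S_{1,2}$ to rewrite that syllable over $S_{1,2}\cup S_{1,2}^{-1}$, and merge it with a neighbour to decrease the syllable count. The extra bookkeeping you flag (alternating types, boundary cases $j=1$ and $j=n$) is handled implicitly in the paper by the minimality of $k$, but your argument is the same in substance.
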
   
\begin{proof} 
Let $w$ be a non-empty word on $S\cup S^{-1}$.  Assume $\overline{w} = 1$.
We need to prove that $w \gozotR \varepsilon$.

We 
decompose $w$ as $w = w_1\cdots w_k$ for some minimal $k \geq 1$, such that 
each subword $w_i$ is written either over $S_1\cup S_1^{-1}$ 
or $S_2\cup S_2^{-1}$ (we call the $w_i$ the factors of $w$). 
We prove by induction on $k$ that $w \gozotR \varepsilon$.
Clearly the result is true when $k=1$.

If $k\geq 2$, it follows from consideration of amalgam normal forms (see, for example \cite[Corollary 1.8]{GodellePJM}) that some $\overline{w_i}$ belongs to 
$A_{S_{1,2}}$.
Then, where $w_i$ is written over $S_\ell \cup S_\ell^{-1}$, since
$(S_\ell,R_\ell)$ satisfies Property~$H\!^\textit{+}$,   there exists $w'_i$ in  $S_{1,2}\cup S_{1,2}^{-1}$ 
so that  $w_i \gozotR w'_i$. 
We let $w'$ be the word derived from $w$ by replacing $w_i$ by $w'_i$.
Then $w \gozotR w'$. 
If $1<i$ then $w'$ contains $w_{i-1}w'_i$ as a subword, and otherwise
$i<k$ and $w'$ contains $w'_iw_{i+1}$ as a subword; either subword is written over $S_j$. So now, since we have decomposed $w'$ as a product of at most $k-1$
factors, induction implies that $w' \gozotR \varepsilon$.
It follows that
$w \gozotR w' \gozotR \varepsilon$, and our proof is complete.
\end{proof}
Unfortunately this approach does not appear
to lead to a proof that all Artin-Tits groups of sufficiently large type
satisfy property~$H$, since it is not clear that the presentation of the amlgamated product 
would satisfy  Property~$H\!^\textit{+}$, as we would need in order to
prove that iterating the construction gives a group satisfying~$H$.
Instead we need a stronger property than $H$ that \emph{is} preserved under 
amalgamation. 

\subsection{The property~$H^\#$}
\label{sec:Hsharp}
The article \cite{DeG} introduces a stronger property than Property $H$, namely Property $H^\#$. And in that article it is proved that the
strategy proposed in Section~\ref{sec:Hplus} can be applied to  Property $H^\#$. This is the way that Property $H$
is proved to be satisfied by FC type Artin-Tits groups. We follow the same
approach here to prove that Property $H$ is satisfied by sufficiently large Artin-Tits groups.

 If $H$ is a subgroup of a group~$G$, we call an \emph{$H$-transversal}  of $G$ any  subset of~$G$ that contains exactly one element from each left-$H$-coset,
and in particular contains~$1$. 
If $A_S$ is an Artin-Tits group generated by $S$, then we call an
\emph{$S$-sequence of transversals} any sequence $(T(S'))_{S'\subseteq S}$
for which the set $T(S')$ is a $A_{S'}$-transversal of $A_S$, for each $S' \subseteq S$. 
\begin{defi}
\label{D:Propp}
Assume that $(S, R)$ is an Artin-Tits presentation and $T$ is an associated $S$-sequence of transversals.
We say that $(S, R)$ satisfies \emph{Property~$H^\#$ (with respect to}~$T$) if,
for all~$S', S_0 \subseteq S$ and any word $w$ over $S\cup S^{-1}$, setting $S'_0 = S' \cap S_0$, the following relation is satisfied:
\begin{equation*}
\displaylines{\overline{w} \in A_{S'} \ \Rightarrow \hfill\cr\hfill \exists v \in (S'\cup {S'}^{-1})^*,\  \exists u \in (S'_0\cup {S'_0}^{-1})^* \ (w \gozotR v u \mbox{\ and \ }\overline{v} \in T(S_0)).}
\end{equation*}
\end{defi}

Roughly speaking, this property says that for every parabolic subgroup
$A_{S_0}$ of $A_S$,
there is an $A_{S_0}$-transversal $T(S_0)$ of $A_S$,
so that one is able to find the decomposition of any element $g$ as the
product of its transversal representative and an element of $A_{S_0}$.
Moreover, this can be done at the level of the word representatives, using
operations of type  $0,1,2$, only.  Finally  when the element $g$, itself,
belongs to another parabolic subgroup $A_{S'}$
then the two obtained representative words of the decomposition can be
written over  $S'\cup S'^{-1}$.
This imposes in particular the condition that the restriction of $T(S_0)$ to the elements
that belong to $A_{S'}$ provide an  $A_{S_0\cap S'}$-transversal of $A_{S'}$.
Considering the case where $S'$ is the empty set, we get the first part of the following proposition

\begin{prop}\cite[Proposition 2.14]{DeG}\label{propHDpAmal} Let $(S,R)$ be an Artin-Tits presentation. 
\begin{enumerate}
\item If $(S,R)$ satisfies Property $H^\#$, then $(S,R)$ satisfies Property $H$. 
\item Assume $S = S_1\cup S_2$. Set $S_{1,2} = S_1\cap S_2$.
Denote by $A_1$, $A_2$ and $A_{1,2}$ the parabolic subgroups of $A_S$ generated
by $S_1$, $S_2$ and $S_{1,2}$, respectively and by $(S_1,R_1)$, $(S_2,R_2)$
and $(S_{1,2},R_{1,2})$ their Artin-Tits presentations. Assume
that $A_S = A_{S_1}*_{A_{S_{1,2}}}A_{S_2}$ and  $(S_1,R_1)$, $(S_2,R_2)$ and $(S_{1,2},R_{1,2})$ satisfy Property $H^\#$. Then $(S,R)$ satisfies Property $H^\#$. 
\end{enumerate}
\end{prop}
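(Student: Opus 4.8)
The plan is to treat the two parts separately, with part (1) being immediate and part (2) carrying essentially all of the content.

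For part (1), I would instantiate the defining implication of Property~$H^\#$ at $S' = \emptyset$. Suppose $H^\#$ holds with respect to an $S$-sequence of transversals $T$, and let $w$ be any word with $\overline{w} = 1$. Since $A_\emptyset = \{1\}$, we have $\overline{w} \in A_{S'}$ for $S' = \emptyset$, and then $S'_0 = S' \cap S_0 = \emptyset$ for every $S_0$. The property then yields words $v$ and $u$ over the empty alphabet --- forcing $v = u = \varepsilon$ --- with $w \gozotR vu = \varepsilon$, the condition $\overline{v} \in T(S_0)$ being automatic since $1$ lies in every transversal. The converse implication needed for Property~$H$ is trivial, since each transformation of type $0$, $1$ or $2$ preserves the represented group element. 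Hence $H^\#$ for $(S,R)$ gives $H$ for $(S,R)$.

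For part (2), write $G = A_S$, $A = A_{S_1}$, $B = A_{S_2}$, $C = A_{S_{1,2}}$, so that $G = A *_C B$, and fix the coherent $S_i$-sequences of transversals witnessing $H^\#$ for each factor. The first step is to build an $S$-sequence of transversals $(T(S_0))_{S_0 \subseteq S}$ for $G$ out of those of $A$, $B$ and $C$. I would do this using the normal-form theory of amalgamated products: every element of $G$ has a reduced alternating expression, and I would define $T(S_0)$ to consist of those elements whose normal form is assembled from the transversal representatives of the factors, with the tail syllable drawn from the appropriate factor transversal relative to $S_0 \cap S_1$ or $S_0 \cap S_2$. The essential requirement is the \emph{coherence} condition implicit in Property~$H^\#$: for all $S', S_0 \subseteq S$, the set $T(S_0) \cap A_{S'}$ must be an $A_{S_0 \cap S'}$-transversal of $A_{S'}$. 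Verifying this coherence --- and first arranging that the given transversals on $A$ and $B$ agree over the shared subgroup $C$ --- is where the bulk of the work lies.

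With the transversals in hand, I would prove the defining implication of $H^\#$ by induction on the amalgam syllable length of $w$. I decompose $w = w_1 \cdots w_n$ into a minimal alternating concatenation of factors over $S_1 \cup S_1^{-1}$ and $S_2 \cup S_2^{-1}$. In the base case $n \le 1$, $w$ lies over a single factor and I apply $H^\#$ for that factor directly, transporting the resulting decomposition along the coherence relation to obtain a decomposition relative to $T(S_0)$. For the inductive step, I use that when $\overline{w} \in A_{S'}$ and $n \ge 2$, the normal-form theory guarantees either a pinch (some interior $\overline{w_i} \in C$) or, after applying $H^\#$ to an end factor to push a prefix or suffix into $C = A_{S_{1,2}}$, a reduction of the syllable count; merging the resulting $C$-syllable into an adjacent factor gives a strictly shorter word representing the same element, to which induction applies. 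Throughout, one tracks the transversal representative produced so that the final output $vu$ satisfies $\overline{v} \in T(S_0)$, with $v$ over $S' \cup {S'}^{-1}$ and $u$ over $S'_0 \cup {S'_0}^{-1}$. I expect the main obstacle to be exactly the transversal construction together with its coherence verification: Property~$H^\#$ demands a \emph{single} $S$-sequence of transversals simultaneously compatible with every pair $(S', S_0)$, and gluing the coherent sequences of the two factors across $C$ while preserving this global compatibility --- and while ensuring each intermediate application of $H^\#$ returns words over exactly the allowed alphabets --- is the delicate heart of the argument.
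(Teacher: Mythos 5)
Your part (1) is exactly the paper's own justification: the paper proves nothing more than the remark immediately preceding the proposition, namely that taking $S'=\emptyset$ in Definition~\ref{D:Propp} forces $v=u=\varepsilon$ and hence $w \gozotR \varepsilon$ whenever $\overline{w}=1$; the converse is trivial. So that half matches.

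For part (2) there is nothing in the paper to compare against: the proposition is imported wholesale from \cite[Proposition 2.14]{DeG}, and no proof of the amalgamation statement appears here. Judged on its own terms, your part (2) is a plan rather than a proof, and the two places you yourself flag as ``the delicate heart'' are genuine gaps, not routine verifications. First, you never actually construct the $S$-sequence of transversals $(T(S_0))_{S_0\subseteq S}$ or verify the coherence condition that $T(S_0)\cap A_{S'}$ is an $A_{S_0\cap S'}$-transversal of $A_{S'}$ for \emph{every} pair $(S',S_0)$; note that $A_{S'}$ is itself an amalgam $A_{S'\cap S_1}*_{A_{S'\cap S_{1,2}}}A_{S'\cap S_2}$, so this requires compatibility of the factor normal forms with all parabolic subgroups simultaneously, which is precisely what makes $H^\#$ harder than $H$. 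Second, your inductive step asserts that when $\overline{w}\in A_{S'}$ and $n\ge 2$ ``the normal-form theory guarantees either a pinch or \dots a reduction of the syllable count''; the pinch argument is standard when $\overline{w}\in A_{S_1}\cup A_{S_2}$ (it is how the paper proves the weaker Proposition~\ref{propHplus_amalg} for Property~$H\!^{\textit{+}}$), but for general $S'$ the target element need not lie in a factor, and you must also track, through every merge of syllables, that the final pair $(v,u)$ lands over exactly $S'\cup S'^{-1}$ and $S'_0\cup {S'_0}^{-1}$ with $\overline{v}$ in the glued transversal. None of this is carried out, so part (2) should be regarded as unproved in your write-up (as it is, deliberately, in the paper, which defers to \cite{DeG}).
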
   
Note also that Property $H^\#$ immediately implies Property $H^{\textit{+}}$.
We shall use Propositions~\ref{propcarSLAT} and~\ref{propHDpAmal} in the final section
to deduce Theorems~\ref{TH1} and ~\ref{TH2} from Proposition~\ref{protrans},
which proves Property $H^\#$ for Artin-Tits groups of large type, together with
the same result for finite rank free Abelian groups.
Since finite rank Abelian groups are all of spherical type, and hence covered
by the results of \cite{DeG}, we focus now on the 
case of   large type Artin-Tits groups.

\subsection{Critical sequence}\label{secCritSequ}
In order to prove Property $H^\#$ for large type Artin-Tits groups, we have first to find a $S$-sequence of transversals.  The key argument turns out to
involve critical factorizations and associated critical sequences of transformations, which we have already mentioned,
and for which we now need a definition.
We recall the notion of a critical word from Section~\ref{seccritword}. 
Our definition of a critical factorization is taken  from \cite{HoR}.
Roughly speaking a critical factorization of a word is more-or-less 
its expression as a concatenation of
critical words with one letter overlaps. 

\begin{defi} \label{defcrtifact}Assume $A_S$ is an Artin-Tits group. 
Let $n$ be a positive number, and suppose that $w$ is freely reduced.
\begin{enumerate}
\item We define a {\em critical factorization} of $w$ of length 1 to be an
expression of $w$ as a concatenation $\alpha v_1 \beta$ of (possibly empty) words $\alpha,\beta$ and
a critical word $v_1$, and call $v_1$ the {\em critical factor} of that factorization. A critical factorization of length 1 is considered to be
both rightward and leftward.

\item For $n \geq 2$,  we define a {\em rightward critical factorization} of $w$ of
length $n$ with {\em critical factors} $v_1,\ldots,v_n$ to be an expression of $w$ as a concatenation
$\alpha v_1\cdots v_n \beta$, where
$v_1$ is critical, and, where $s$ is the last letter of $\tau(v_1)$ (as defined in Section~\ref{seccritword}),
the words $sv_2,v_3,\ldots,v_n$ are the critical factors of a critical factorization of
length $n-1$ of the word $sv_2v_3\cdots v_n$. 
\item For $n \geq 2$,  we define a {\em leftward critical factorization} of $w$ of
length $n$ with {\em critical factors} $v_1,\ldots,v_n$ to be an expression of $w$ as a concatenation
$\alpha v_n\cdots v_1 \beta$, where
$v_1$ is critical, and, where $s$ is the first letter of $\tau(v_1)$ ,
the words $v_2s,v_3,\ldots,v_n$ are the critical factors of a critical factorization of
length $n-1$ of the word $v_nv_{n-1}\cdots v_2s$. 
\end{enumerate}
\end{defi}
Note that a word $w$ might admit many critical factorizations, 
both rightward and leftward, and that we do not require the subwords $\alpha$, $\beta$ of $w$ to be non-empty.

It is convenient to extend our definition of $\tau$ so that it can be applied to
the sequence of critical factors in a critical factorization.
\begin{enumerate}
\item
We define the image 
$\tau(v_1,\ldots,v_n)$ of the sequence of critical factors of a rightward
critical factorization to be the $n$-tuple $(v'_1,v'_2,\ldots, v'_n)$. 
where $v'_1$ is the maximal proper prefix of $\tau(v_1)$, 
and $(v'_2,\ldots,v'_n)$ is the image under $\tau$ of the sequence 
of critical factors $(sv_2,\ldots,v_n)$ of $sv_2\cdots v_n$.
\item
We define the image 
$\tau(v_1,\ldots,v_n)$ of the sequence of critical factors of a leftward critical factorization
to be the $n$-tuple $(v`_1,v'_2,\ldots,v'_n)$. 
where $v'_1$ is the maximal proper suffix of $\tau(v_1)$, 
and $(v'_2,\ldots,v'_n)$ is the image under $\tau$ of the sequence 
of critical factors $(v_2s,\ldots,v_n)$ of $v_n\cdots v_2s$.
\end{enumerate}

We shall need the following later.
\begin{lemm}\label{Lemdcomcrisequ} Assume that $A_S$ is an Artin-Tits group.
If $(v_1,\cdots, v_n)$ is the sequence of factors of a rightward critical factorization, then so are
both $(v_1,\ldots,v_k)$ and $(sv_{k+1},\ldots,v_n)$, for any $ 1\leq k<n$,
where $s$ is the final letter of the last term of $\tau(v_1,\ldots,v_k)$.
\end{lemm}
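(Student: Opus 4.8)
The plan is to prove both assertions by induction, unwinding Definition~\ref{defcrtifact} together with the extension of $\tau$ to sequences of critical factors. The essential observation is that a rightward critical factorization of length $n$ is built from a cascade of $n-1$ \emph{overlap letters}: setting $s_0v_1 := v_1$, one obtains letters $s_1,\ldots,s_{n-1}$ by the rule that $s_j$ is the last letter of $\tau(s_{j-1}v_j)$, and the definition guarantees that for each $1\le j\le n-1$ the residual sequence $(s_jv_{j+1},v_{j+2},\ldots,v_n)$ is itself a rightward critical factorization of length $n-j$ whose leading factor $s_jv_{j+1}$ is critical. Running in parallel, the sequence-level map satisfies $\tau(v_1,\ldots,v_k)=(v'_1,\ldots,v'_k)$ with $v'_j$ the maximal proper prefix of $\tau(s_{j-1}v_j)$; in particular $\tau(s_{k-1}v_k)=v'_k\,s_k$, so $s_k$ is exactly the letter that, appended to the last term $v'_k$ of $\tau(v_1,\ldots,v_k)$, recovers the full critical word, which is the letter $s$ named in the statement.

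First I would prove that $(v_1,\ldots,v_k)$ is a rightward critical factorization, by strong induction on $n$, phrasing the hypothesis as: every prefix subsequence of a rightward critical factorization of length $<n$ is again a rightward critical factorization. For $k=1$ this is immediate, since $v_1$ is critical and a length-$1$ factorization is by fiat both rightward and leftward. For $k\ge 2$ the word $v_1$ is critical, and, writing $s_1$ for the last letter of $\tau(v_1)$, the sequence $(s_1v_2,\ldots,v_n)$ is a rightward critical factorization of length $n-1$ by Definition~\ref{defcrtifact}; its prefix $(s_1v_2,\ldots,v_k)$ of length $k-1<n-1$ is a rightward critical factorization by the induction hypothesis, which is precisely the condition needed for $(v_1,\ldots,v_k)$ to be one. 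This settles the first assertion.

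For the second assertion I would show, by induction on $k$, that the overlap letters $s_1,\ldots,s_{n-1}$ are well defined and that $(s_kv_{k+1},\ldots,v_n)$ is a rightward critical factorization of length $n-k$. The base case $k=1$ is Definition~\ref{defcrtifact} applied to $(v_1,\ldots,v_n)$ directly. For the inductive step one applies the definition to the rightward critical factorization $(s_kv_{k+1},\ldots,v_n)$ produced at the previous stage: its leading factor $s_kv_{k+1}$ is critical, $s_{k+1}$ is the last letter of $\tau(s_kv_{k+1})$, and $(s_{k+1}v_{k+2},\ldots,v_n)$ is a rightward critical factorization of length $n-k-1$. Since $(s_kv_{k+1},\ldots,v_n)$ is exactly the sequence $(sv_{k+1},\ldots,v_n)$ of the statement once we identify $s=s_k$, this proves the claim. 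The degenerate boundary cases $k=1$ and $k=n-1$ (the latter giving a residual factorization of length $1$) are handled directly.

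The one genuinely delicate point — which I would treat as the main obstacle — is the bookkeeping identifying the letter $s$ of the statement with the overlap letter $s_k$ of the recursion. This requires matching the two parallel recursions: the one in Definition~\ref{defcrtifact}, which produces the residual factorizations and their leading letters, and the one defining $\tau$ on sequences, which at each stage records the maximal proper prefix $v'_j$ of $\tau(s_{j-1}v_j)$ and passes the chopped-off last letter $s_j$ forward into the next factor. The identity $\tau(s_{k-1}v_k)=v'_k\,s_k$ is what pins down $s$: the last term $v'_k$ of $\tau(v_1,\ldots,v_k)$ determines $s_k$ as the unique letter completing it to the critical word $\tau(s_{k-1}v_k)$, and it is precisely this letter that resurfaces as the leading letter $s_k$ of the residual factorization $(s_kv_{k+1},\ldots,v_n)$. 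Verifying this correspondence — and thereby reconciling the letter named in the statement with the letter supplied by the recursion — is the crux; once it is in place, both assertions follow from the two inductions above.
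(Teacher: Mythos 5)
Your proof is correct and takes the same route as the paper, whose entire proof is the sentence ``This follows from the definition. We leave the details to the reader''; your two inductions on the prefix and suffix subsequences are precisely the unwinding the authors have in mind. The only caveat is the base case of the sequence-level $\tau$: for the letter $s$ of the statement to coincide with your overlap letter $s_k$, the last term of $\tau(v_1,\ldots,v_k)$ must be read as the full critical word $\tau(s_{k-1}v_k)$ rather than its maximal proper prefix, which is the convention the paper uses implicitly elsewhere (e.g.\ in Proposition~\ref{propHoRcrseq}, where the last letter of $v'_n$ is the one that freely cancels into $\beta$).
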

\begin{proof} This follows from the definition. We leave the details to the reader.  
\end{proof}

\begin{defi} \label{defcrittrans} 
Where a freely reduced word $w$ admits a (rightward or leftward) critical factorization of length $n$, with critical factors $v_1,\ldots,v_n$,
the sequence of $n$ critical transformations that leads to the replacement of the (rightward or leftward) concatenation of the critical factors by the (rightward or leftward)
concatenation of the terms in the sequence $\tau(v_1,\ldots,v_n)$
is called a {\em critical sequence of transformations}. That sequence
is called rightward critical or leftward critical
depending on whether the factorization is rightward critical or leftward critical.
\end{defi}

We shall need the following result, derived from  \cite[Proposition 3.3]{HoR}
and \cite[Proposition 3.2 (2)]{HoR2}.  
\begin{prop}\label{propHoRcrseq}
Let $A_S$ be an Artin-Tits group of large type.
Suppose that $w$ is freely reduced over $S \cup S^{-1}$, representing the 
group element $g \in A_S$.
\begin{enumerate}
\item If $w$ is not geodesic then $w$ admits a rightward critical factorization
of the form $\alpha v_1\cdots v_n \beta$, with $\beta$ non-empty, and application of a single rightward critical sequence of transformations transforms $w$ to
a word $w'=\alpha v'_1\cdots v'_n\beta$ that is not freely reduced.
Specifically, the last letter of $v'_n$ is the inverse of the first letter
of $\beta$.
\item
Suppose that some lexicographic ordering has been chosen for $S \cup S^{-1}$,
and that $w_g$ is the minimal representative of $g$, with respect to the shortlex order on words

If $w$ is not equal to $w_g$, then some finite combination of rightward and leftward critical sequences of transformations together with free cancellation (after each rightward sequence) transforms $w$ to $w_g$.
\end{enumerate}
\end{prop}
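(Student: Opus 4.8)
The plan is to read both parts of Proposition~\ref{propHoRcrseq} off the corresponding statements in \cite{HoR} and \cite{HoR2}, the only real work being to check that the recursive combinatorics packaged in Definitions~\ref{defcrtifact} and~\ref{defcrittrans} reproduce exactly the ``critical sequences'' used there. I would begin by recalling the underlying fact from \cite{HoR}: a freely reduced word that fails to be geodesic contains a critical factor, and applying $\tau$ to that factor produces a word to which further reductions can be chained. The point of the rightward critical factorization $\alpha v_1 \cdots v_n \beta$ is that it records precisely this chaining: applying $\tau$ to $v_1$ creates a boundary letter $s$ at the right end of $\tau(v_1)$, and by definition $s v_2, v_3, \ldots, v_n$ is again a critical factorization, so the process repeats. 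The extended definition of $\tau$ on the sequence $(v_1, \ldots, v_n)$ encodes exactly this ``carry'': each $v_i'$ is obtained by stripping the boundary letter off $\tau(s v_i)$ and prepending it to $v_{i+1}$.

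For part (1), I would argue as follows. Starting from the critical factor guaranteed by \cite{HoR}, I build the rightward critical factorization inductively, using Lemma~\ref{Lemdcomcrisequ} to guarantee that each truncation $(s v_{k+1}, \ldots, v_n)$ is again the factor-sequence of a rightward critical factorization, so the construction is well defined and terminates. The critical sequence of transformations of Definition~\ref{defcrittrans} then replaces $v_1 \cdots v_n$ by the concatenation of the terms of $\tau(v_1, \ldots, v_n)$, and the content of \cite[Proposition 3.3]{HoR} is precisely that this can be arranged so that the final boundary letter, that is, the last letter of $v_n'$, is the inverse of the first letter of the remaining suffix $\beta$ (which is therefore non-empty). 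Hence $w' = \alpha v_1' \cdots v_n' \beta$ is not freely reduced, as claimed.

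For part (2), I would first use part (1) repeatedly: whenever the current word is not geodesic, a single rightward critical sequence followed by free cancellation strictly shortens it, so after finitely many such steps I reach a geodesic representative of $g$. It remains to pass from an arbitrary geodesic word to the shortlex-minimal word $w_g$, and this is exactly the statement of \cite[Proposition 3.2 (2)]{HoR2}, once one checks that the elementary moves used there are the leftward and rightward critical sequences of Definition~\ref{defcrittrans}; the leftward sequences are genuinely needed because rearranging a geodesic word into the lexicographically least one requires moving material in both directions. Combining the two phases yields the finite combination of rightward and leftward critical sequences with interleaved free cancellation that transforms $w$ into $w_g$.

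The main obstacle I anticipate is the bookkeeping in the translation itself: one must verify that the recursively defined $\tau$ on sequences, together with the boundary-letter carrying, matches the operations of \cite{HoR, HoR2} step for step, and in particular that the claim ``the last letter of $v_n'$ is the inverse of the first letter of $\beta$'' survives the recursion intact. This is routine but delicate, since a single misalignment in which letter is carried forward would break both the freely-reduced conclusion of part (1) and the termination argument of part (2).
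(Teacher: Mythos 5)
Your proposal matches the paper's treatment: the paper gives no independent proof of Proposition~\ref{propHoRcrseq}, stating only that it is ``derived from \cite[Proposition 3.3]{HoR} and \cite[Proposition 3.2 (2)]{HoR2},'' which is exactly the two-part citation strategy you lay out (part (1) from the former, part (2) from the latter after reducing to the geodesic case). The translation bookkeeping you flag as the main obstacle is likewise left implicit in the paper, so your attempt is consistent with, and if anything slightly more explicit than, the source.
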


We shall also need the following corollary later. 
\begin{coro} \label{coroequivred} Assume that $A_S$ is an Artin-Tits group of Large type, and that  $w,w'$ are two geodesic representative words of the same element. Then  $w' \goesR{1,2} w$. 
\end{coro}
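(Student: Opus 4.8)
The plan is to route both geodesics through the shortlex-minimal representative of their common element, exploiting the fact that the reduction procedure of Proposition~\ref{propHoRcrseq} is assembled entirely from critical transformations, which are reversible because $\tau$ is an involution. Fix a lexicographic order on $S\cup S^{-1}$, set $g=\overline w=\overline{w'}$, and let $w_g$ be the shortlex-minimal representative of $g$; since $w_g$ is geodesic we have $|w_g|=|w|=|w'|$, and being geodesic both $w,w'$ are freely reduced. I will show that every geodesic representative $u$ of $g$ satisfies both $u\goesR{1,2}w_g$ and $w_g\goesR{1,2}u$. Applying this to $u=w$ and $u=w'$ then gives $w'\goesR{1,2}w_g\goesR{1,2}w$, which is the assertion.

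For the forward direction, apply Proposition~\ref{propHoRcrseq}(2) to $u$: a finite combination of rightward and leftward critical sequences of transformations, with free cancellation after each rightward sequence, carries $u$ to $w_g$. By Definition~\ref{defcrittrans} each critical sequence is a sequence of type~$\beta$ transformations $\gamma\mapsto\tau(\gamma)$ on critical factors, and by Lemma~\ref{Lemtauet3} each such transformation is realised by transformations of types $1$ and $2^\star$, hence of types $1$ and $2$. The only ingredient that does not a priori lie in $\goesR{1,2}$ is the interspersed free cancellation (type~$0$), and the crucial point is that none actually occurs here. Indeed, each step $\gamma\mapsto\tau(\gamma)$ preserves both the represented element and the word length, since $\gamma$ and $\tau(\gamma)$ are critical words with the same parameters $p,q$ (Section~\ref{seccritword}). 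Hence, after any one critical sequence is applied to a geodesic word, the result still represents $g$ and still has length $|w_g|$; were it not freely reduced, the ensuing free cancellation would produce a representative of $g$ of length strictly below $|w_g|$, contradicting the minimality of $w_g$. So the intermediate word is already freely reduced and no type~$0$ step is triggered. By induction every intermediate word is geodesic, and the whole passage from $u$ to $w_g$ uses only types $1$ and $2$.

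For the reverse direction I read this sequence of type~$\beta$ transformations backwards. Each step $\gamma\mapsto\tau(\gamma)$ is undone by the type~$\beta$ transformation $\tau(\gamma)\mapsto\tau(\tau(\gamma))=\gamma$, which is legitimate precisely because $\tau(\gamma)$ is again a critical word; applying Lemma~\ref{Lemtauet3} to each reversed step yields $w_g\goesR{1,2}u$, completing the claim and hence the corollary. Observe that the argument never reverses an individual type~$2$ transformation directly --- which would be delicate, since the reverse of a type~$2r$ transformation need not be of type~$2l$ --- but instead reverses at the level of type~$\beta$ transformations and re-expands each via Lemma~\ref{Lemtauet3}. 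The heart of the proof, and the step I expect to require the most care, is the no-free-cancellation claim of the second paragraph: it is exactly what upgrades the procedure of Proposition~\ref{propHoRcrseq}, which in general invokes type~$0$, to one using only types $1$ and $2$, and it is here that geodesy of both endpoints is genuinely used.
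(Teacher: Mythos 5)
Your proposal is correct and follows essentially the same route as the paper: both geodesics are carried to the shortlex-minimal representative $w_g$ by critical sequences of transformations from Proposition~\ref{propHoRcrseq}(2), reversibility comes from $\tau$ being an involution on critical words, and Lemma~\ref{Lemtauet3} converts the type~$\beta$ steps into types $1$ and $2$. Your explicit justification that no free cancellation can intervene (since each critical sequence preserves length and the represented element, so an unreduced intermediate word would contradict geodesy) spells out a point the paper handles only implicitly by invoking leftward sequences alone, but it is the same argument in substance.
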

\begin{proof}  Set $g = \overline{w} = \overline{w'}$,
fix an ordering of $S \cup S^{-1}$, and let $w_g$ be the unique minimal 
representative of $g$ with respect to the associated shortlex ordering on words.
Proposition~\ref{propHoRcrseq} ensures that a combination of leftward critical 
sequences of transformations
transforms each of $w,w'$ into $w_g$; hence a sequence of
transformations of type $\beta$ (according to Definition~\ref{D:Specialbeta}) 
transforms $w$ and $w'$ into $w_g$, and therefore  $w$ into $w'$. The result now follows by 
Lemma~\ref{Lemtauet3}. 
\end{proof}

\subsection{The $S$-transversal for large Type Artin-Tits groups} \label{secTransv}
With the notion of critical sequence at hand, we can now introduce an $S$-transversal for a large type Artin-Tits group and prove that Property $H^\#$ holds for Artin-Tits group of large type.  

To verify that the set we define does give an $S$-transversal, we shall need the following.
\begin{prop}\label{Proptrans} Assume that $A_S$ is an Artin-Tits group of large type. Suppose that $S_0 \subseteq S$ and $g \in A_S$. Then 
\begin{enumerate}
\item Choose $h \in A_{S_0}$ to satisfy the equation $g = g_0h$,
where $g_0$ is a minimal length element of $gA_{S_0}$.  Let $w_0$ and $z$
be geodesic representative words  of $g_0$ and $h$, respectively. Then $w_0z$ is a  geodesic representative word  of $g$. 
\item The left coset $gA_{S_0}$ contains a unique element of minimal length. \end{enumerate}
\end{prop}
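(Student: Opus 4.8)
The plan is to prove both parts together, deriving part~(2) as a consequence of part~(1). The heart of the matter is part~(1): the claim that if $g_0$ is a minimal-length representative of the coset $gA_{S_0}$ and $g = g_0 h$ with $h \in A_{S_0}$, then concatenating geodesic words $w_0$ for $g_0$ and $z$ for $h$ produces a geodesic word $w_0 z$ for $g$. The natural strategy is to argue that no cancellation or shortening can occur at the junction between $w_0$ and $z$, using the convexity of parabolic subgroups (Proposition~\ref{prop:ChP}) together with the critical-sequence machinery from Proposition~\ref{propHoRcrseq}.

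First I would set up the argument by contradiction: suppose $w_0 z$ is \emph{not} geodesic, so $\overline{w_0 z} = g$ has a strictly shorter representative. By Proposition~\ref{propHoRcrseq}(1), after free reduction $w_0 z$ admits a rightward critical factorization, and a single rightward critical sequence of transformations produces a word that is not freely reduced; the resulting cancellation must straddle the boundary between the part coming from $w_0$ and the part coming from $z$, since $w_0$ and $z$ are each individually geodesic. The key point to extract is that such a shortening would let us write $g_0 = g_0' s^{\pm 1}$ for some generator, where $g_0' s^{\pm 1}$ still lies in the coset $g_0 A_{S_0} = g A_{S_0}$ but with $\overline{s^{\pm 1}} \in A_{S_0}$ absorbed into the $A_{S_0}$-part; this would contradict the minimality of the length of $g_0$ among representatives of $gA_{S_0}$. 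To make this precise I would use Lemma~\ref{Lemtauetgen} to control the last letter of the transformed critical factor and hence identify exactly which generator is being cancelled against the first letter of $z$, and then invoke convexity to ensure that the letter absorbed from $z$ into $g_0$ genuinely belongs to $S_0$.

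For part~(2), once part~(1) is established, uniqueness of the minimal-length element follows from the shortlex/geodesic analysis: if $g_0$ and $g_0'$ were two distinct minimal-length elements of $gA_{S_0}$, then $g_0^{-1}g_0' \in A_{S_0}$ is a nontrivial element, and writing $g_0' = g_0 \cdot (g_0^{-1}g_0')$ with a geodesic word for each factor, part~(1) forces $\ell(g_0') = \ell(g_0) + \ell(g_0^{-1}g_0') > \ell(g_0)$, contradicting that $g_0'$ also has minimal length. Hence the minimal-length coset representative is unique.

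The main obstacle will be the junction analysis in part~(1): namely, showing rigorously that any length-reducing critical sequence applied to $w_0 z$ forces the cancellation of a letter of $S_0$ out of the $w_0$-part, so that minimality of $g_0$ is genuinely violated. This requires careful bookkeeping with the critical factors $v_1,\ldots,v_n$ straddling the boundary and correct application of Lemma~\ref{Lemtauetgen} and Proposition~\ref{prop:ChP}; the risk is that the critical factorization interacts with both $w_0$ and $z$ in a way that obscures which generator is absorbed. I expect the convexity of parabolic subgroups to be exactly the tool that resolves this, by guaranteeing that the geodesic structure of the $A_{S_0}$-part stays inside $S_0 \cup S_0^{-1}$.
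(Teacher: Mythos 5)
Your overall strategy coincides with the paper's: contradiction via Proposition~\ref{propHoRcrseq}(1), localisation of the length reduction at the junction of $w_0$ and $z$, convexity (Proposition~\ref{prop:ChP}) to keep $z$ over $S_0\cup S_0^{-1}$, Lemma~\ref{Lemtauetgen} to track the last letter of the transformed critical factor; and your derivation of part~(2) from part~(1) is exactly the paper's. But the ``junction analysis'' that you explicitly defer is the entire content of the proof, and the sketch you give of it is not how the contradiction actually materialises, so as it stands there is a genuine gap. Two steps are missing. First, before you may assert that the critical factorization ``straddles the boundary'' you must dispose of the cases where the whole block $v_1\cdots v_n$ of critical factors, or some initial segment $v_1,\ldots,v_k$ of it, sits inside $w_0$: the paper handles these by applying the corresponding critical transformations inside $w_0$ (using Lemma~\ref{Lemdcomcrisequ}) to replace $w_0$ by another geodesic representative of $g_0$, after which either a free cancellation appears at the junction (reducing to the freely-reduced case, which is settled directly by minimality of $g_0$ and convexity) or the \emph{first} critical factor $v_1$ genuinely overlaps both $w_0$ and $z$.

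Second, the contradiction is not, as you suggest, that ``a generator is absorbed from $w_0$ into the $A_{S_0}$-part''; it is a membership argument on the two generators supporting the straddling factor. Since $v_1$ is a critical word it is written over exactly two generators $s_1,t_1$ spanning a dihedral parabolic. Its rightmost letter lies inside $z$, so $s_1\in S_0$; and $t_1\notin S_0$, since otherwise the non-empty suffix of $w_0$ contained in $v_1$ would be a word over $S_0\cup S_0^{-1}$ whose image could be moved into the $A_{S_0}$-factor, contradicting minimality of $g_0$ --- this is where minimality is actually used. But Lemma~\ref{Lemtauetgen} says the last letter of $\tau(v_1)$ is named $t_1$, and by the structure of the rightward critical factorization that letter either free-cancels with the next letter of $z$ or is a letter of $v_2$, which lies in $z$; either way $t_1\in S_0$, a contradiction. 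Without this two-generator argument (and the preliminary verification that $w_0z$ is freely reduced at all, which again rests on minimality of $g_0$), your outline does not yet close.
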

\begin{proof} 
Let $g_0,h,w_0,z$ be as specified in (1).
It follows from the convexity (see Proposition~\ref{prop:ChP}) of parabolic subgroups of Artin groups that $z$ is a word written over $S_0 \cup S_0^{-1}$.
Since $w_0$ and $z$ are freely reduced words, $w_0z$ must also be freely reduced;
for otherwise $w_0$ has a proper suffix over $S_0 \cup S_0^{-1}$ that freely cancels
with a proper prefix of $z$, and hence a proper prefix of $w_0$ represents a
shorter element of $gA_{S_0}$ than $g_0$. 

So now suppose that $w_0z$ is not geodesic.
Then by Proposition~\ref{propHoRcrseq} $w_0z$  admits a 
rightward critical factorization $\alpha v_1\cdots v_n \beta = \alpha v \beta$
such that the last letter of the concatenation of the terms of $\tau(v_1,\ldots,v_n)$
freely cancels with the first letter $s$ of the non-empty word $\beta$.
Since $w_0$ and $z$ are geodesic, clearly $vs$ cannot be a factor of either.
If $v$ were a suffix of $w_0$, then we could replace that suffix by
the concatenation of the terms of $\tau(v_1,\ldots,v_n)$ to get a new choice of $w_0$; but then we would have free reduction
between $w_0$ and $z$.
We deduce that $v$ must overlap $w_0$ and $z$.

Now, if for some $k<n$ the first $k$ terms of the critical factorization
were within $w_0$, then by Lemma~\ref{Lemdcomcrisequ} those terms would give a critical
factorization of a factor of $w_0$, and replacing them
by the terms of $\tau(v_1,\ldots,v_k)$ would give us a different minimal
choice for $w_0$. With this choice,  Lemma~\ref{Lemdcomcrisequ} now
implies that $w_0z$ would admit a 
rightward critical factorization with critical factors $tv_{k+1},\ldots,v_n$,
where $t$  is the last letter of the concatenation of the terms of $\tau(v_1,\cdots,v_k)$.
Hence we can assume that the first term $v_1$ of the sequence overlaps
both $w_0$ and $z$.

Suppose now that $v_1$ involves generators $s_1$ and $t_1$. Let $s_1$ be the rightmost letter of $v_1$. Then we know that $s_1$ is within $z$, so $s_1 \in S_0 \cup S_0^{-1}$. But since $v_1$ is not a subword of $z$, we see that $t_1 \not \in S_0 \cup S_0^{-1}$.

Let $t$ be the generator immediately to the right of $v_1$ in $z$ (we know there
must be such a generator even when $n=1$, since $\beta$ is non-empty).
Then, by Lemma~\ref{Lemtauetgen},
$t_1$ is the name of the generator at the end of $\tau(v_1)$; so either
$t_1=t^{\pm 1}$ (when we have a free reduction), or $t_1$ is the name of a generator in $v_2$, which is a subword of $z$. Either way $t_1 \in S_0$, and so we have a contradiction.

So all cases lead to a contradiction. We conclude that $w_0z$ must be a geodesic representative word of $g$. 

So (1) holds. Now, (2) follows immediately. For if $g_1$ is another element of $gA_{S_0}$, 
then $g_1=g_0h_1$ for some non-trivial $h_1 \in A_{s_0}$, and so $g_0$ has  a geodesic
representative of which $w_0$ is a proper prefix.
\end{proof}

\begin{coro} \label{corotrans} Assume that $A_S$ is an Artin-Tits group of
Large type. For $S_0\subseteq S$, Denote by  $T(S_0)$ the set of elements of
$A_S$ that are of minimal length in their left $A_{S_0}$-coset.
Then, the sequence $(T(S_0))_{S_0\subseteq S}$, is a $S$-sequence of transversals. 
\end{coro}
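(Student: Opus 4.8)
The plan is to deduce Corollary~\ref{corotrans} directly from Proposition~\ref{Proptrans}, which does essentially all the work. Recall that by definition a $A_{S_0}$-transversal of $A_S$ is a set containing exactly one element from each left $A_{S_0}$-coset (and containing~$1$), and that an $S$-sequence of transversals is a family $(T(S_0))_{S_0 \subseteq S}$ in which each $T(S_0)$ is such a transversal. So to establish the corollary I need only verify, for each fixed $S_0 \subseteq S$, that the set $T(S_0)$ of minimal-length elements of their own left $A_{S_0}$-cosets is a genuine $A_{S_0}$-transversal; the statement about the whole sequence then follows since this holds uniformly for every $S_0$.

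First I would fix $S_0 \subseteq S$ and check that $T(S_0)$ meets every left coset in exactly one element. Given any $g \in A_S$, the coset $gA_{S_0}$ is non-empty, so it certainly contains at least one element of minimal length; that element lies in $T(S_0)$ by definition, so $T(S_0)$ meets $gA_{S_0}$. For uniqueness I invoke Proposition~\ref{Proptrans}(2), which asserts precisely that the left coset $gA_{S_0}$ contains a \emph{unique} element of minimal length. Hence $T(S_0) \cap gA_{S_0}$ is a singleton for every $g$, which is exactly the condition that $T(S_0)$ is a transversal. Finally, since $1$ is the unique minimal-length element of the coset $1 \cdot A_{S_0} = A_{S_0}$ (any non-trivial element of $A_{S_0}$ has a freely reduced representative of positive length, whereas $1$ is represented by $\varepsilon$), we have $1 \in T(S_0)$, confirming the normalization required in the definition of a transversal.

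Having verified that each $T(S_0)$ is an $A_{S_0}$-transversal of $A_S$, and that this construction is defined for every $S_0 \subseteq S$, the family $(T(S_0))_{S_0 \subseteq S}$ satisfies the defining conditions of an $S$-sequence of transversals, which is what the corollary claims. There is no real obstacle here: the entire geometric content — that minimal-length coset representatives behave well with respect to geodesics, and in particular that minimal-length representatives are unique — has already been extracted in Proposition~\ref{Proptrans}, whose proof uses convexity of parabolic subgroups (Proposition~\ref{prop:ChP}) together with the critical-sequence machinery of Proposition~\ref{propHoRcrseq}. The only point requiring any care is making sure one reads off uniqueness from part~(2) rather than re-deriving it, so that the corollary is genuinely a one-line consequence of the proposition.
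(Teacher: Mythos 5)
Your proposal is correct and follows exactly the paper's own argument: existence of a minimal-length representative in each coset is immediate, uniqueness is read off from Proposition~\ref{Proptrans}(2), and $1\in T(S_0)$ holds because $1$ is minimal in $A_{S_0}$. No gaps and no meaningful divergence from the published proof.
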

\begin{proof} That each $T(S_0)$ is a left transversal follows immediately from part (2) of Proposition~\ref{Proptrans}.
And $1 \in T(S_0)$ is immediate since the  element $1$ is minimal in its left coset $A_{S_0}$. 
\end{proof}

\begin{prop}  \label{protrans} Let $A_S$ be an Artin-Tits group of Large type.
Then $A_S$ satisfies the Property $H^\#$.
\end{prop}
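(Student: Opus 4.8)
The plan is to verify Property $H^\#$ directly against Definition~\ref{D:Propp}, using the transversal $(T(S_0))_{S_0 \subseteq S}$ established in Corollary~\ref{corotrans}. So I fix subsets $S', S_0 \subseteq S$, set $S'_0 = S' \cap S_0$, and take a word $w$ over $S \cup S^{-1}$ with $\overline{w} \in A_{S'}$. Writing $g = \overline{w}$, I must produce words $v$ over $S' \cup {S'}^{-1}$ and $u$ over $S'_0 \cup {S'_0}^{-1}$ with $w \gozotR vu$ and $\overline{v} \in T(S_0)$. The natural candidates come from the coset decomposition used to build the transversal: write $g = g_0 h$ where $g_0$ is the minimal-length element of $gA_{S_0}$ (so $\overline{g_0} \in T(S_0)$) and $h \in A_{S_0}$, exactly as in Proposition~\ref{Proptrans}(1).

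The key steps are as follows. First I would appeal to Corollary~\ref{CoroHplusLarge} (Property~$H\!^{\textit{+}}$), or directly to Proposition~\ref{PropHLarge2}, to reduce $w$ by operations of type $0,1,2^\star$ to a geodesic word $w_1$ representing $g$; since $g \in A_{S'}$, convexity (Proposition~\ref{prop:ChP}) forces $w_1$ to be written over $S' \cup {S'}^{-1}$. Next, by Proposition~\ref{Proptrans}(1), the concatenation $w_0 z$ is a geodesic representative of $g$, where $w_0, z$ are geodesic representatives of $g_0, h$; moreover $z$ is over $S_0 \cup S_0^{-1}$ by convexity. The crucial observation is that $g_0 \in A_{S'}$ as well: since $g \in A_{S'}$ and $h = g_0^{-1}g$ lies in the coset, one checks using the geodesic $w_0 z$ (written over $S'$, again by convexity applied to $g \in A_{S'}$) that both $g_0$ and $h$ lie in $A_{S'}$, whence $h \in A_{S'} \cap A_{S_0} = A_{S'_0}$ (using that parabolic subgroups meet in the parabolic on the intersection of generating sets). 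Thus $w_0$ may be taken over $S' \cup {S'}^{-1}$ and $z$ over $S'_0 \cup {S'_0}^{-1}$. I then set $v = w_0$ and $u = z$: we have $\overline{v} = g_0 \in T(S_0)$ by construction, and $vu = w_0 z$ is geodesic representing $g$.

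It remains to connect $w$ to $vu$ by type $0,1,2$ transformations. Both $w_1$ and $vu$ are geodesic representatives of $g$ in a large type group, so Corollary~\ref{coroequivred} gives $w_1 \goesR{1,2} vu$. Chaining with the reduction $w \goesR{0,1,2^\star} w_1$ and recalling that $2^\star$ is a special case of type $2$, I obtain $w \gozotR vu$, which is exactly the required conclusion.

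The main obstacle is the bookkeeping in the second step: proving that $g_0$ and $h$ genuinely lie in $A_{S'}$ and $A_{S'_0}$, so that $v$ and $u$ are over the correct alphabets demanded by the definition. This is where convexity of parabolics and the intersection identity $A_{S'} \cap A_{S_0} = A_{S'_0}$ (from \cite{VdL,Par}) do the real work; one must argue carefully that the minimal coset representative of an element of $A_{S'}$ again lies in $A_{S'}$, rather than merely that some representative word can be reduced. Once the alphabets are pinned down correctly, the transformation count is a routine assembly of the results already proved for large type groups.
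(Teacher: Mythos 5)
Your proposal is correct and follows essentially the same route as the paper's own proof: the transversal of Corollary~\ref{corotrans}, the decomposition $g=g_0h$ with $w_0z$ geodesic by Proposition~\ref{Proptrans}, convexity to pin the alphabets of $w_0$ and $z$ to $S'$ and $S'_0$, reduction of $w$ to a geodesic via Proposition~\ref{PropHLarge2}, and Corollary~\ref{coroequivred} to pass between the two geodesics. The only cosmetic difference is that you invoke the intersection identity for parabolics, where the paper simply reads the alphabet of $z$ off the geodesic word $w_0z$ directly.
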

\begin{proof} 
Let $(T(S_0))_{S_0\subseteq S}$ be 
 the $S$-sequence of transversals defined in Corollary~\ref{corotrans}. 
Choose subsets $S_0,S'$ of $S$,
and let $w$ be a word with $\overline{w} \in A_{S'}$.
Choose $g_0$ in  $T(S_0)$  and $h$ in $A_{S_0}$ with $\overline{w} = g_0h$,
and let $w_0$ and $z$ be geodesic representative words of $g_0$ and $h$, respectively.  

Since  $h \in A_{S_0}$, the convexity of parabolic subgroups ensures that $z$ is written over $S_0 \cup S_0^{-1}$.  
By Proposition~\ref{Proptrans}, the word $w_0z$ is a geodesic representative word of  $\overline{w}$, and so is written over
$S' \cup {S'}^{-1}$. 
Hence, $w_0$ is written over  $S' \cup {S'}^{-1}$, and $z$ is written over  $S_0' \cup {S_0'}^{-1}$,
where $S_0'  = S'\cap S_0$.
So, in order to prove that Property $H^\#$ holds for $A_S$, it remains to prove that $w \gozotR  w_0z$.

By proposition ~\ref{PropHLarge2}, there exists a geodesic representative word $w'$ so that  $w \gozotR w'$.
It remains to show that $w' \goesR{1,2} w_0z$.  Since $w'$ and  $w_0z$ are two
geodesic representative words of the same element, this follows immediately
from Corollary~\ref{coroequivred}. \end{proof}

\subsection{Proof of Theorems~\ref{TH2} and ~\ref{TH1}}
\label{sec:finalproof}
We are now almost ready to prove Theorem~\ref{TH2}.  Using
Proposition~\ref{propHDpAmal},   we deduce the result from Proposition~\ref{protrans}  by induction on the
number of edges labelled with $\infty$ in the Coxeter graph. We need a
preliminary result
\begin{lemm} \label{Lemmdirectprod}Let $(S_1,R_1)$ and $(S_2,R_2)$ be two Artin-Tits presentations that satisfy property $H^\#$ with $S_1\cap S_2 = \emptyset$.  Set $$R_{1,2} = \{s_1s_2 = s_2s_1\mid s_1\in S_1;\ s_2\in S_2\}$$ Then, the Artin-Tits presentation $(S_1\cup S_2,R_1\cup R_2\cup R_{1,2})$  satisfies property $H^\#$.                                                                                              
\end{lemm}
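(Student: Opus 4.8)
The plan is to recognise that, since every relation in $R_{1,2}$ is a commutator, the presentation $(S_1\cup S_2,\,R_1\cup R_2\cup R_{1,2})$ presents the direct product $A_{S_1}\times A_{S_2}$. Write $S = S_1\cup S_2$, and for a subset $X\subseteq S$ put $X_i = X\cap S_i$, so that $X = X_1\sqcup X_2$ and every parabolic subgroup splits as $A_X = A_{X_1}\times A_{X_2}$. In particular, for the data of Property~$H^\#$ I set $P_i = S'\cap S_i$ and $Q_i = S_0\cap S_i$, and record that $S'_0 = S'\cap S_0 = (P_1\cap Q_1)\cup(P_2\cap Q_2)$. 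Each hypothesised Property~$H^\#$ for $(S_i,R_i)$ is taken with respect to some $S_i$-sequence of transversals $T_i$, and I would define the transversal for the product by $T(S_0) = T_1(Q_1)\,T_2(Q_2) = \{t_1t_2 : t_1\in T_1(Q_1),\ t_2\in T_2(Q_2)\}$. Because a left coset of $A_{S_0} = A_{Q_1}\times A_{Q_2}$ in $A_{S_1}\times A_{S_2}$ is the product of a left coset of $A_{Q_1}$ with one of $A_{Q_2}$, uniqueness of the direct-product decomposition shows that $T(S_0)$ meets each coset exactly once and contains $1$; hence $(T(S_0))_{S_0\subseteq S}$ is a genuine $S$-sequence of transversals.

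The technical tool I would establish first is that transformations of types $0$ and $1$ suffice to interchange any letter of $S_1\cup S_1^{-1}$ with any adjacent letter of $S_2\cup S_2^{-1}$. For a same-sign pair such as $s_2s_1$ this is a single type~$1$ transformation from $R_{1,2}$; for a mixed-sign pair such as $s_2s_1^{-1}$ one inserts $s_1^{-1}s_1$ by a type~$0$ move, applies the relation $s_1s_2 = s_2s_1$, and deletes the freed $s_1s_1^{-1}$ by a second type~$0$ move, so that $s_2s_1^{-1}\gozotR s_1^{-1}s_2$; the remaining sign combination is symmetric. Repeatedly interchanging an out-of-order adjacent pair (a bubble-sort) then transforms any word $w$ over $S\cup S^{-1}$ into a factorisation $w\gozotR w^{(1)}w^{(2)}$ with $w^{(1)}$ over $S_1\cup S_1^{-1}$ and $w^{(2)}$ over $S_2\cup S_2^{-1}$.

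Now assume $\overline w\in A_{S'}$ and carry out the main argument. After sorting, $\overline{w^{(1)}}\in A_{S_1}$ and $\overline{w^{(2)}}\in A_{S_2}$ multiply to an element of $A_{S'} = A_{P_1}\times A_{P_2}$, so uniqueness of the decomposition forces $\overline{w^{(i)}}\in A_{P_i}$. Applying Property~$H^\#$ of $(S_i,R_i)$ to $w^{(i)}$ with the subsets $P_i$ and $Q_i$ yields words $v_i$ over $P_i\cup P_i^{-1}$ and $u_i$ over $(P_i\cap Q_i)\cup(P_i\cap Q_i)^{-1}$ with $w^{(i)}\gozotR v_iu_i$ and $\overline{v_i}\in T_i(Q_i)$. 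Since these transformations only rewrite factors lying in $S_i\cup S_i^{-1}$, they apply unchanged to the factor $w^{(i)}$ inside $w^{(1)}w^{(2)}$, giving $w\gozotR v_1u_1v_2u_2$. One final use of the interchange tool moves the $S_2$-word $v_2$ to the left past the $S_1$-word $u_1$, producing $w\gozotR v_1v_2\,u_1u_2$. Setting $v = v_1v_2$ and $u = u_1u_2$, one reads off that $v$ is over $S'\cup{S'}^{-1}$ (as $S' = P_1\cup P_2$), that $u$ is over $S'_0\cup{S'_0}^{-1}$ (as $S'_0 = (P_1\cap Q_1)\cup(P_2\cap Q_2)$), and that $\overline v = \overline{v_1}\,\overline{v_2}\in T_1(Q_1)T_2(Q_2) = T(S_0)$, which is precisely Property~$H^\#$ for $(S,R)$.

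The whole argument is driven by the direct-product structure, and I expect no deep obstacle. The only point needing genuine care is the interchange of opposite-sign letters, which is not furnished by a single relation of $R_{1,2}$ and must instead be realised through the type~$0$ insertion/deletion device above; beyond that, the work is the bookkeeping needed to confirm that the index sets $P_i$, $Q_i$, and $P_i\cap Q_i$ reassemble correctly so that $v$, $u$, and $\overline v$ land in the alphabets and transversal demanded by Definition~\ref{D:Propp}.
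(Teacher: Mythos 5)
Your overall route coincides with the paper's: define the transversal $V(S_0)$ as the product set $T_1(S_0\cap S_1)\,T_2(S_0\cap S_2)$, sort the word into an $S_1$-part followed by an $S_2$-part by adjacent interchanges, apply Property~$H^\#$ to each part, and commute $u_1$ past $v_2$ at the end. The bookkeeping of the sets $P_i$, $Q_i$, $P_i\cap Q_i$ is correct and matches the paper.

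However, there is a genuine error at the one step you yourself flag as delicate: the interchange of a mixed-sign adjacent pair. You realise $s_2s_1^{-1}\rightsquigarrow s_1^{-1}s_2$ by \emph{inserting} a pair $s_1^{-1}s_1$, applying the commutation relation, and then deleting $s_1s_1^{-1}$. But in Definition~\ref{D:Special} insertion of a cancelling pair is a transformation of type~$\infty$, not of type~$0$ (type~$0$ is removal only), and type~$\infty$ is precisely the operation that the relation $\gozotR$ excludes; if insertions were permitted, Property~$H$ would be vacuous. As written, your interchange step therefore uses a forbidden move. The conclusion you want is nevertheless true, and the correct justification is the one the paper uses: a mixed-sign swap such as $s_1^{-1}s_2 \rightarrow s_2s_1^{-1}$ (and its three variants) is a single transformation of type~$2r$ or $2l$ derived from the relation $s_1s_2=s_2s_1$, taking $u_1,u_2,v_1,v_2$ to be the four single letters (all non-empty, so both directions are available). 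Replacing your insertion/deletion device by this observation repairs the proof, after which your argument is essentially identical to the paper's.
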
 
Note that in the above lemma, the Artin-Tits group $A_{S_1\cup S_2}$ associated with the presentation $(S_1\cup S_2,R_1\cup R_2\cup R_{1,2})$ is isomorphic to the direct product $A_{S_1}\times A_{S_2}$.
\begin{proof} Let  $(T(S_{1,0}))_{S_{1,0}\subseteq S_1}$, be a $S_1$-sequence
of transversals and  $(U(S_{2,0}))_{S_{2,0}\subseteq S_2}$, be a $S_2$-sequence of transversals.
For $S_0\subseteq S_1\cup S_2$, set  $$V(S_0) =\{g_1g_2\mid g_1\in T(S_0\cap S_1);  g_2\in U(S_0\cap S_2)\}.$$
Since  $A_{S_1\cup S_2}$ is the direct product of  $A_{S_1}$ and $A_{S_2}$, it immediately follows that $(V(S_{0}))_{S_{0}\subseteq S}$, is a $S$-sequence of transversals.

Let $S'$ be a subset of $S$.
Set $S'_1 = S_1\cap S'$, $S'_2 = S_2\cap S'$, $S'_{1,0} = S_1\cap S'\cap S_0$ and $S'_{2,0} = S_2\cap S'\cap S_0$. 
Let $w$ be a word in $S\cup S^{-1}$ so that $\overline{w}$ belongs to $A_{S'}$. 

Since the latter group is the direct product of  $A_{S'_1}$ and $A_{S'_2}$, 
there exists a unique pair $(g_1g_2)$ so that $\overline{w} = g_1g_2$ with
$g_1\in A_{S'_1}$ and $g_2\in A_{S'_2}$.   
Note that for any two words $w',w''$, any $s_1$ in $S_1$ and $s_2$ in $S_2$,
and $\varepsilon,\tau\in \{\pm 1\}$, any transformation from  $w's^\varepsilon_is_j^\tau w"$
to $w's^\tau_js_i^\varepsilon w"$ is an elementary transformation of type~$1$ or of type~$2$,
since $s_1s_2 = s_2s_1$ belong to $R_{1,2}$  Therefore, there exist $w_1$ in $S_1\cup S_1^{-1}$ and $w_2$ in $S_2\cup S_2^{-1}$ so that $w\goesR{1,2} w_1w_2$. 
This imposes  $\overline{w_1} = g_1$ and  $\overline{w_2} = g_2$.  

Since $(S_1,R_1)$  satisfies property $H^\#$, there exist $v_1 \in (S_1'\cup {S_1'}^{-1})^*$ and $u_1 \in (S'_{1,0}\cup {S'_{1,0}}^{-1})^*$  so that $w_1 \gozotR v_1 u_1$ and $\overline{v_1} \in T(S_{1,0})$. 
Similarly, there exist $v_2 \in (S_2'\cup {S_2'}^{-1})^*$ and $u_2 \in (S'_{2,0}\cup {S'_{2,0}}^{-1})^*$  so that $w_2 \gozotR v_2 u_2$ and $\overline{v_2} \in T(S_{2,0})$. 
Thus, we get  $w\gozotR v_1u_1v_2u_2$.  
By the same argument 	as before we have  $u_1v_2 \goesR{1,2} v_2u_1$  and   $w\gozotR v_1v_2u_1u_2$. 
The word $v_1v_2$ lies in $(S'\cup {S'}^{-1})^*$ and by definition of $V(S_{0})$, the element $\overline{v_1v_2}$ belongs to this transversal. 
Finally, the word $u_1u_2$ lies in  $(S'_{0}\cup {S'_{0}}^{-1})^*$. 
Hence, Property $H^\#$ holds for the presentation $(S_1\cup S_2,R_1\cup R_2\cup R_{1,2})$. 
\end{proof}

\begin{proof}[Proof of Theorem~\ref{TH2}] 
Recall that $\mathcal{T}$ is the family of those Artin-Tits presentation $(S,R)$ whose Coxeter graph~$\Gamma$ satisfies the property that every connected full subgraph without an $\infty$-labelled edge is either of spherical type or of large type. 
For $(S,R)$ in~$\mathcal{T}$, we denote the number of edges labelled with $\infty$ in the associated graph by  $d_\infty(S,R)$. 
Let $(S,R)$ belong to $\mathcal{T}$ and denote by $\Gamma$ its Coxeter graph. 
 As announced in the introduction of this section, we prove the result by induction on $d_\infty(S,R)$.   

If $d_\infty(S,R) = 0$, then $T$ is a union of connected Coxeter graphs of  spherical type or of large type. 
Since Property $H^\#$ holds for both types (see \cite{DeG} for the spherical case), by Lemma~\ref{Lemmdirectprod},  $(S,R)$ satisfies   Property $H^\#$. 

Now assume $d_\infty(S,R) \geq 1$.  
Let $s_1,s_2$ belong to $S$ so that $m_{s_1,s_2}=\infty$. 
Set $S_1 = S\setminus \{s_2\}$,  $S_2 = S\setminus \{s_1\}$ and $S_{1,2} = S\setminus \{s_1,s_2\}$.  
Consider $(S_1, R_1)$, $(S_2, R_2)$ and $(S_{1,2}, R_{1,2})$, the Artin-Tits presentations associated with  the full subgraphs of $\Gamma$ generated by $S_1$, $S_2$ and $S_{1,2}$, respectively. 
They are the presentations of the subgroups $A_{1}$, $A_{2}$ and $A_{1,2}$ of $A_S$ generated by $S_1$, $S_2$ and $S_{1,2}$, respectively, and $A_S$ is equal to the amalgamated product $A_1*_{1,2}A_2$. 
Since $(S,R)$ belongs to $\mathcal{T}$, the presentations $(S_1, R_1)$, $(S_2, R_2)$ and $(S_{1,2}, R_{1,2})$ belong to $\mathcal{T}$, too. 
Moreover, we have $d_\infty(S_1,R_1) <d_\infty(S,R)$, $d_\infty(S_2,R_2) <d_\infty(S,R)$ and  $d_\infty(S_{1,2},R_{1,2}) <d_\infty(S,R)$. 
So by the induction hypothesis,  Property $H^\#$ hold for  $(S_1, R_1)$, $(S_2, R_2)$ and $(S_{1,2}, R_{1,2})$.  
So, $(S,R)$ verifies Property $H^\#$  by Proposition~\ref{propHDpAmal}. It also satisfies Property $H$ by~\ref{propHDpAmal}.
\end{proof}

Theorem~\ref{TH1} can be deduced as an immediate corollary, using Proposition~\ref{propcarSLAT} and the fact that
every Artin-Tits presentation of large type belongs to the family~$\mathcal{T}$.


\begin{thebibliography}{99}



\bibitem{App}
{\bf K.I. Appel.}
{\it On Artin groups and Coxeter groups of large type.}
Contemp. Math. {\bf 33} (1984), 50--78.

\bibitem{ApS}
{\bf K.I. Appel, P.E. Schupp}
{\it Artin groups and infinite Coxeter groups.}
Invent. Math. {\bf 72} (1983), 201--220.

\bibitem{BrC}
{\bf T. Brady, J.P. McCammond.}
{\it Three-generator Artin groups of large type are biautomatic.}
P. Pure Appl. Algebra {\bf 151} (2000), 1--9.

\bibitem{BrS}
{\bf E. Brieskorn, K. Saito.}
{\it Artin-Gruppen und Coxeter-Gruppen.}
Invent. Math. {\bf 17} (1972), 245--271.

\bibitem{Cha}
{\bf R. Charney}
{\it Injectivity of the positive monoid for some infinite type Artin groups.}
Geometric group theory down under (Canberra, 1996), 103--118, de Gruyter, Berlin, 1999.

\bibitem{ChP} 
{\bf R. Charney, L Paris.}
{\it Convexity of parabolic subgroups in Artin groups}
Bull. Lond. Math. Soc. {\bf 46} (2014), 1248--1255.

\bibitem{CGW}
{\bf J. Crisp, E. Godelle, B. Wiest}
{\it The conjugacy problem in right-angled Artin groups and their subgroups}
J.P.A.A. {\bf 2} (2009), 442--460.

\bibitem{DeG}
{\bf P. Dehornoy, E. Godelle}
{\it A conjecture about Artin-Tits groups}
J.P.A.A. {\bf 217} (2013), 241--256.

\bibitem{Deh}
{\bf P. Dehornoy}
{\it The subword reversing method}
Intern. J. Alg. and Comput. {\bf 21} (2011), 71--118.

\bibitem{GodellePJM}
{\bf E. Godelle}
{\it Parabolic subgroups of Artin groups of type FC}
Pacific J. Math. {\bf 208} (2003), 243--254.

\bibitem{HoR}
{\bf D.F. Holt, S. Rees}
{\it  Artin groups of large type are shortlex automatic with regular geodesics}
Proc. London Math. Soc. {\bf 104} (2012), 486--512.

\bibitem{HoR2}
{\bf D.F. Holt, S. Rees}
{\it  Shortlex automaticity and geodesic regularity in Artin groups},
 Groups, Complex. Cryptol. 5 (2013), 1--23.

\bibitem{MaM}
{\bf J. Mairesse, F. Math\'eus.}
{\it Growth series for Artin groups of dihedral type.}
Int. J. Algebra Comput. {\bf 16} (2006), 1087--1107.

\bibitem{Par}
{\bf L. Paris.}
{\it Parabolic subgroups of Artin groups.}
J. Algebra. {\bf 196} (1997), 369--399.

\bibitem{Ser}
{\bf H Servatius},
{\it Automorphisms of Graph Groups}, J. Algebra {\bf 126} (1987), 34-60.

\bibitem{VdL}
{\bf H. van der Lek.}
{\it The homotopy type of complex hyperplane complements.}
Ph. D. Thesis, Nijmegen, 1983.

\end{thebibliography}
\end{document}